\newcommand{\leqnomode}{\tagsleft@true\let\veqno\@@leqno}
\newcommand{\reqnomode}{\tagsleft@false\let\veqno\@@eqno}
\newcommand{\nc}{\newcommand}
\numberwithin{equation}{section}
\theoremstyle{plain}
\newtheorem{theorem}{Theorem}[section]
\newtheorem*{theorem*}{Theorem}
\newtheorem{proposition}[theorem]{Proposition}
\newtheorem{lemma}[theorem]{Lemma}
\theoremstyle{definition}
\newtheorem{definition}[theorem]{Definition}
\newtheorem*{example*}{Example}
\newtheorem{remark}[theorem]{Remark}
\nc{\Lemma}{\begin{lemma}}
\nc{\enlemma}{\end{lemma}}
\nc{\Prop}{\begin{proposition}}
\nc{\enprop}{\end{proposition}}
\nc{\Def}{\begin{definition}}
\nc{\edf}{\end{definition}}
\renewcommand{\emptyset}{\varnothing}
\nc{\scup}{\mathop{\scalebox{.8}{$\displaystyle\bigcup$}}\mspace{1mu}\limits}
\nc{\scap}{\mathop{\scalebox{.8}{$\displaystyle\bigcap$}}\limits}
\nc{\ssqcup}{\mathop{\scalebox{.8}{$\displaystyle\bigsqcup$}}\limits}
\newcommand{\dunion}{\sqcup}
\newcommand{\C}{\mathbb{C}}
\newcommand{\R}{\mathbb{R}}
\newcommand{\Z}{\mathbb{Z}}
\DeclareMathOperator{\id}{id}
\newcommand{\derived}[1]{\mathrm{#1}}
\newcommand{\derd}{\derived{D}}
\newcommand{\dere}{\derived{E}}
\newcommand{\derr}{\derived{R}}
\newcommand{\derl}{\derived{L}}
\nc{\derb}{\derd^{\mathrm{b}}}
\newcommand{\BDC}{\derd^{\mathrm{b}}}
\nc{\soplus}{\scalebox{.65}{\raisebox{.2ex}{$\displaystyle\bigoplus$}}}
\newcommand{\DSum}{\mathop{\bigoplus}}
\newcommand{\dsum}[1][]{\mathbin{\oplus_{#1}}}
\newcommand{\ilim}[1][]{\mathop{\varinjlim}\limits_{#1}}
\renewcommand{\to}[1][]{\xrightarrow{#1}}
\newcommand{\from}[1][]{\xleftarrow{#1}}
\newcommand{\isofrom}[1][]{\xleftarrow[#1]%
{\raisebox{-.4ex}[0ex][-.4ex]{$\mspace{2mu}\sim\mspace{2mu}$}}}
\newcommand{\isoto}[1][]{\xrightarrow[#1]{%
{\raisebox{-.6ex}[0ex][0ex]{$\mspace{1mu}\sim\mspace{2mu}$}}}}
\newcommand{\Endo}[1][]{\mathrm{End}_{\raise1.5ex\hbox to.1em{}#1}}
\newcommand{\Hom}[1][]{\mathrm{Hom}_{\raise1.5ex\hbox to.1em{}#1}}
\newcommand{\RHom}[1][]{\derr\mathrm{Hom}_{\raise1.5ex\hbox to.1em{}#1}}
\newcommand{\Ext}[2][]{\mathrm{Ext}_{\raise1.5ex\hbox to.1em{}#1}^{#2}}
\newcommand{\Mod}{\mathrm{Mod}}
\newcommand{\Tens}[1][]{\mathbin{\otimes_{\raise1.5ex\hbox to-.1em{}#1}}}
\newcommand{\LTens}[1][]{\mathbin{\otimes_{\raise1.5ex\hbox to-.1em{}#1}^{\derl}}}
\newcommand{\Tor}[2][]{\mathrm{Tor}^{\raise1.5ex\hbox to.1em{}#1}_{#2}}
\newcommand{\sheaffont}[1]{\mathcal{#1}}
\def\she{\sheaffont{E}}
\def\shi{\sheaffont{I}}
\def\shl{\sheaffont{L}}
\def\shm{\sheaffont{M}}
\def\shp{\sheaffont{P}}
\newcommand{\rsect}{\derr\varGamma}
\newcommand{\shendo}[1][]{{\sheaffont{E}nd}_{\raise1.5ex\hbox to.1em{}#1}}
\renewcommand{\hom}[1][]{{\sheaffont{H}om}_{\raise1.5ex\hbox to.1em{}#1}}
\newcommand{\aut}[1][]{{\sheaffont{A}ut}_{\raise1.5ex\hbox to.1em{}#1}}
\newcommand{\inn}[1][]{{\sheaffont{I}nn}_{\raise1.5ex\hbox to.1em{}#1}}
\newcommand{\rhom}[1][]{{\derr\sheaffont{H}om}_{\raise1.5ex\hbox to.1em{}#1}}
\newcommand{\ext}[2][]{{\sheaffont{E}xt}_{\raise1.5ex\hbox to.1em{}#1}^{#2}}
\newcommand{\thom}[1][]{{\sheaffont{T}hom}_{\raise1.5ex\hbox to.1em{}#1}}
\newcommand{\tens}[1][]{\mathbin{\otimes_{\raise1.5ex\hbox to-.1em{}#1}}}
\newcommand{\ltens}[1][]{\mathbin{\otimes_{\raise1.5ex\hbox to-.1em{}#1}^{\derl}}}
\newcommand{\tor}[2][]{{\sheaffont{T}or}^{\raise1.5ex\hbox to.1em{}#1}_{#2}}
\newcommand{\etens}[1][]{\mathbin{\boxtimes_{\raise1.5ex\hbox to-.1em{}#1}}}
\newcommand{\oim}[1]{#1_*}
\newcommand{\roim}[1]{\derr#1_*}
\newcommand{\reim}[1]{\derr#1_{\mspace{.5mu}!}\mspace{2mu}}
\newcommand{\reeim}[1]{\derr#1_{\mspace{1mu}!!}\mspace{1mu}}
\newcommand{\opb}[1]{#1^{-1}}
\newcommand{\epb}[1]{#1^{\mspace{1.5mu}!}\mspace{2mu}}
\newcommand{\tenstop}[1][]{\mathbin{\hat{\otimes}_{\raise1.5ex\hbox to-.1em{}#1}}}
\newcommand{\homtop}[1][]{\sheaffont{L}_{\raise1.5ex\hbox to.1em{}#1}}
\newcommand{\Homtop}[1][]{\mathrm{L}_{\raise1.5ex\hbox to.1em{}#1}}
\DeclareMathOperator{\ord}{ord}
\newcommand{\D}{\sheaffont{D}}
\renewcommand{\O}{\sheaffont{O}}
\newcommand{\detens}[1][]%
{\mathbin{\boxtimes_{\raise1.5ex\hbox to-.1em{}#1}^{\mspace{2mu}\mathsf{D}}}}
\newcommand{\dtens}[1][]{\mathbin{\otimes_{\raise1.5ex\hbox to-.1em{}#1}^{\mathsf{D}}}}
\newcommand{\hol}{\mathrm{hol}}
\newcommand{\reghol}{{\mathrm{rh}}}
\renewcommand{\leq}{\leqslant}
\renewcommand{\geq}{\geqslant}
\newcommand{\field}{\mathbf{k}}
\newcommand{\ind}{\mathrm{I}\mspace{2mu}}
\newcommand{\ifield}{\ind\field}
\newcommand{\Rc}{{\R\text-\mathrm{c}}}
\newcommand{\Cc}{{\C\text-\mathrm{c}}}
\newcommand{\cl}{\colon}
\newcommand{\ctens}{\mathbin{\mathop\otimes\limits^+}}
\newcommand{\cetens}[1][]{\mathbin{\mathop\boxtimes\limits^+_{\raise1.5ex\hbox to-.1em{}#1}}}
\newcommand{\cihom}{{\derr\shi hom}^+}
\newcommand{\dr}{\mathcal{DR}}
\renewcommand{\Re}{\operatorname{Re}}
\renewcommand{\Im}{\operatorname{Im}}
\newcommand{\ihom}[1][]{{\shi hom}_{\raise1.5ex\hbox to.1em{}#1}}
\newcommand{\rihom}[1][]{{\derr\mspace{2mu}\shi hom}_{\raise1.5ex\hbox to.1em{}#1}}
\newcommand{\ii}[1][]{{\sheaffont{I}h}_{\raise1.5ex\hbox to.1em{}#1}}
\newcommand{\indlim}[1][]{\mathop{\text{\rm``$\varinjlim$''}}\limits_{#1}}
\newcommand{\dcomp}[1][]{\mathbin{\circ_{\raise1.5ex\hbox to-.1em{}#1}^{\mathsf{D}}}}
\newcommand{\enh}{\derived{E}}
\newcommand{\fhom}{\rhom^\enh}
\newcommand{\FHom}{\RHom^\enh}
\newcommand{\BEC}[2][\ifield]{\dere^{\mathrm{b}}(#1_{#2})}
\newcommand{\BECRc}[2][\ifield]{\dere^{\mathrm{b}}_\Rc(#1_{#2})}
\newcommand{\BECcon}[2][\ifield]{\dere^{\mathrm{b}}_{\bGmp}(#1_{#2})}
\newcommand{\BECp}[2][\ifield]{\dere^{\mathrm{b}}_+(#1_{#2})}
\newcommand{\BECm}[2][\ifield]{\dere^{\mathrm{b}}_-(#1_{#2})}
\newcommand{\BECpm}[2][\ifield]{\dere^{\mathrm{b}}_\pm(#1_{#2})}
\newcommand{\Edual}{\dual^\enh}
 \newcommand{\Eoim}[1]{\enh#1_*}
\newcommand{\Eeeim}[1]{\enh#1_{!!}}
\newcommand{\Eopb}[1]{\enh#1^{-1}}
\newcommand{\Eepb}[1]{\enh\mspace{1mu}#1^{\mspace{1.5mu}!}}
\newcommand{\LE}{\operatorname{L^\enh}}
\newcommand{\semicolon}{\nobreak \mskip2mu\mathpunct{}\nonscript\mkern-\thinmuskip{;}\mskip6mu plus1mu\relax}
\newcommand{\dual}{\mathrm{D}}
\newcommand{\defeq}{\mathbin{:=}}
\newcommand{\eqdef}{\mathbin{=:}}
\newcommand{\bl}{\bigl(}
\newcommand{\br}{\bigr)}
\newcommand{\To}[1][]{\xrightarrow[]{\mspace{10mu}{#1}\mspace{10mu}}}
\newenvironment{myarray}[1]{\relax\setlength{\arraycolsep}{1pt}

\begin{array}{#1}}{\end{array}\relax}
\newcommand{\ba}{\begin{myarray}}
\newcommand{\ea}{\end{myarray}}
\newcommand{\be}{\begin{enumerate}}
\newcommand{\ee}{\end{enumerate}}
\newcommand{\bnum}{\be[{\rm(i)}]}
\newcommand{\bna}{\be[{\rm(a)}]}
\nc{\bwr}{\mbox{\large{$\wr$}}}
\nc{\vphi}{\varphi}
\nc{\seteq}{\mathbin{:=}}
\nc{\noi}{\noindent}
\nc{\ro}{{\rm(}}
\nc{\rf}{{\rm)}\xspace}
\nc{\ms}{\mspace}
\nc{\sbcup}{\mathop{\scalebox{0.75}{$\displaystyle\bigcup$}}}
\nc{\ol}{\overline}
\nc{\scbul}{{\,\raise1pt\hbox{$\scriptscriptstyle\bullet$}\,}}
\nc{\set}[2]{\left\{#1\;\semicolon\; #2 \right\}}
\nc{\extp}{\mathop{\raisebox{.3ex}{\scalebox{0.8}{$\displaystyle\bigwedge$}}}\limits}
\newenvironment{myequation}
{\relax\setlength{\arraycolsep}{1pt}\begin{eqnarray}}
{\end{eqnarray}}
\newenvironment{myequationn}
{\relax\setlength{\arraycolsep}{1pt}\begin{eqnarray*}}
{\end{eqnarray*}}
\newenvironment{myalign}
{\relax\begin{align}}
{\end{align}}
\newenvironment{myalignn}
{\relax\begin{align*}}
{\relax\end{align*}}
\nc{\eq}{\begin{myequation}}
\nc{\eneq}{\end{myequation}}
\nc{\eqn}{\begin{myequationn}}
\nc{\eneqn}{\end{myequationn}}
\nc{\eqa}{\begin{myalign}}
\nc{\eneqa}{\end{myalign}}
\nc{\eqan}{\begin{myalignn}}
\nc{\eneqan}{\end{myalignn}}
\nc{\on}{\operatorname}
\nc{\Ind}{\on{Ind}}
\nc{\Proof}{\begin{proof}}
\nc{\QED}{\end{proof}}
\nc{\cor}{\field}
\nc{\tone}{\To[+1]}
\renewcommand{\ge}{\geq}
\renewcommand{\le}{\leq}
\newcommand{\BECCc}[2][\ifield]{\dere^{\mathrm{b}}_\Cc(#1_{#2})}
\renewcommand{\tor}{\mathrm{tor}}
\newcommand{\bclose}[1]{{\accentset{\vee}{#1}}}
\newcommand{\unbordered}[1]{{\accentset{\circ}{#1}}}
\newcommand{\bR}{{\R_\infty}}
\nc{\unb}{\unbordered}
\nc{\eps}{\varepsilon}
\nc{\epsi}{\epsilon}
\nc{\inb}{\inbordered}
\nc{\colim}{\varinjlim\limits}
\nc{\ssubset}{\subset\ms{-3mu}\subset}
\nc{\al}{\alpha}
\nc{\qtq}[1][and]{\quad\text{#1}\quad}
\nc{\qt}[1]{\quad\text{#1}}
\nc{\olG}[1][f]{{\overset{\ms{4mu}\rule[-.05ex]{1.6ex}{.115ex}}{\Gamma}}_{%
\ms{-3mu}#1}}
\nc{\cf}{\bclose{f}}
\nc{\cp}{\bclose{p}}
\newcommand{\inbordered}[1]{{#1_\infty}}
\newcommand{\bZ}{\inbordered{Z}}
\newcommand{\quot}{\derived Q}
\newcommand{\Efield}{\field^\enh}
\newcommand{\V}{\mathbb{V}}
\newcommand{\W}{\V^*}
\newcommand{\PP}{\mathbb{P}}
\newcommand{\bb}{\PP^*}
\newcommand{\Ex}{\mathbb{E}}
\newcommand{\ex}{\mathsf{E}}
\newcommand{\exx}{\ex}
\newcommand{\Fou}{\mathsf{F}}
\newcommand{\Lap}{\mathsf{L}}
\newcommand{\lap}{{}^\Lap}
\newcommand{\pLap}{\mathbb{L}}
\newcommand{\plap}{{}^\pLap}
\newcommand{\Perv}{\operatorname{Perv}}
\newcommand{\Eperv}{\enh\text{-}\!\Perv}
\nc{\tM}{\widetilde{M}}
\nc{\tX}{\widetilde{X}}
\nc{\ti}{{\tilde\imath}}
\nc{\tj}{{\tilde\jmath}}
\newcommand{\dt}[1]{\widetilde{#1}^{\mathsf d}}
\newcommand{\dtd}[1]{\widetilde{#1}^{\mathsf d,\times}}
\nc{\dtM}{\dt M}
\nc{\dtdM}{\dtd M}
\nc{\dti}{\tilde\imath^{\mathsf d}}
\newcommand{\eprec}{\preccurlyeq}
\newcommand{\aleq}[2][]{\mathrel{\preccurlyeq^{#1}_{#2}}}
\newcommand{\range}[2]{#1 \mathbin{\rhd} #2}
\newcommand{\dotowns}{\mathbin{\bdot\owns}}
\newcommand{\dotsupset}{\mathbin{\bdot\supset}}
\newcommand{\oshp}{\smash{\overline\shp}}
\newcommand{\rb}{\mathsf{rb}}
\newcommand{\pb}{\mathsf{pb}}
\newcommand{\nd}{\mathsf{nd}}
\newcommand{\sm}{\mathsf{sm}}
\newcommand{\sph}{\mathsf{sph}}
\nc{\st}[1]{{\{{#1}\}}}
\nc{\bP}{\mathbb{P}}
\nc{\into}{\hookrightarrow}
\nc{\fR}{{\R_\infty}}
\nc{\cS}{\bclose{S}}
\nc{\RB}[2][N]{#2_{#1}^{\rb}}
\nc{\prb}{p_{\rb}}
\nc{\PB}[2][N]{#2_{#1}^{\pb}}
\nc{\ppb}{p_{\pb}}
\nc{\ND}[2][N]{#2_{#1}^{\nd}}
\nc{\pnd}{p_{\nd}}
\nc{\snd}{s_{\nd}}
\nc{\dND}[2][N]{\bdot{#2}_{#1}^\nd}
\nc{\psm}{p_{\sm}}
\newcommand{\sh}{\mathsf{sh}}
\newcommand{\near}[1][a]{\Psi_{#1}}
\newcommand{\van}[1][a]{\Phi_{#1}}
\newcommand{\fnear}[2][a]{\near[#1]^{\eprec #2}}
\newcommand{\gnear}[2][a]{\near[#1]^{#2}}
\newcommand{\Enu}{\enh\nu}
\newcommand{\Emu}{\enh\mu}
\newcommand{\Esm}{\enh\sigma}
\newcommand*\bigcdot{\mathpalette\bigcdot@{.5}}
\newcommand*\bigcdot@[2]{\mathbin{\vcenter{\hbox{\scalebox{#2}{$\m@th#1\bullet$}}}}}
\newcommand{\bdot}[1]{{\accentset{\bigcdot}{#1}}\vphantom{#1}}
\newcommand{\Gmp}{\R^\times_{>0}}
\newcommand{\bGmp}{\inb{(\Gmp)}}
\newcommand{\SV}{\mathbb{S}\V}
\newcommand{\bbM}{\mathsf{M}}
\newcommand{\ccM}{C}
\newcommand{\obM}{\unbordered{\bbM}}
\newcommand{\cbM}{\bclose{\bbM}}
\newcommand{\bbN}{\mathsf{N}}
\newcommand{\obN}{\unbordered{\bbN}}
\newcommand{\cbN}{\bclose{\bbN}}
\newcommand{\oo}{\unbordered}
\newcommand{\of}{\oo f}
\nc{\oloG}[1][\of]{{\overset{\ms{4mu}\rule[-.05ex]{1.6ex}{.115ex}}{\Gamma}}_{%
\ms{-3mu}#1}}
\nc{\ake}[1][2ex]{\rule[-.5ex]{0ex}{#1}}
\nc{\akew}[1][2ex]{\rule[-1ex]{#1}{0ex}}
\nc{\aked}[2][2ex]{\rule[{#1}]{0ex}{#2}}
\nc{\e}{\mathrm{e}}
\nc{\vep}{\varepsilon}
\begin{document}

\title[Enhanced nearby and vanishing cycles]{Enhanced nearby and vanishing cycles in dimension one and Fourier transform}

\author[A.~D'Agnolo]{Andrea D'Agnolo}
\address[Andrea D'Agnolo]{Dipartimento di Matematica\\
Universit{\`a} di Padova\\
via Trieste 63, 35121 Padova, Italy}
\thanks{The research of A.D'A.\
was partially supported by GNAMPA/INdAM.  He acknowledges the kind hospitality at RIMS of
Kyoto University, and at the Perimeter Institute in Waterloo, during the preparation of this paper.}
\email{dagnolo@math.unipd.it}

\author[M.~Kashiwara]{Masaki Kashiwara}
\thanks{The research of M.K.\
was supported by Grant-in-Aid for Scientific Research (B)
15H03608, Japan Society for the Promotion of Science}
\address[Masaki Kashiwara]{
Kyoto University Institute for Advanced study,
Research Institute for Mathematical Sciences, Kyoto University,
Kyoto 606-8502, Japan \& Korea Institute for Advanced Study, Seoul 02455, Korea}
\email{masaki@kurims.kyoto-u.ac.jp}

\keywords{Sato's specialization and microlocalization, Fourier-Laplace transform, irregular Riemann-Hilbert
correspondence, enhanced perverse sheaves, nearby and vanishing cycles, Stokes filtered local systems}
\subjclass[2010]{Primary 34M35, 32S40, 32C38}

\maketitle

\begin{abstract}
Enhanced ind-sheaves provide a suitable framework for the irregular Riemann-Hilbert correspondence.
In this paper, we give some precisions on nearby and vanishing cycles for enhanced perverse objects in dimension one. As an application, we give a topological proof of the following fact.
Let $\shm$ be a holonomic algebraic $\D$-module on the affine line, 
and denote by $\lap\shm$ its Fourier-Laplace transform.
For a point $a$ on the affine line, denote by $\ell_a$ the corresponding linear function on the dual affine line.
Then, the vanishing cycles of $\shm$ at $a$ are isomorphic to 
the graded component of degree $\ell_a$ of the Stokes filtration of $\lap\shm$ at infinity.
\end{abstract}

\section{Introduction}

\subsection{}\label{sse:introphipsi}
Let $X$ be a smooth complex curve and $F$ a perverse sheaf  on $X$. Recall that, near a singularity $a\in X$, $F$ admits a quiver description in terms of its nearby and vanishing cycles $\near(F)$ and $\van(F)$.  
Let $S_a X$ and $S^*_a X$ be the circles of tangent and cotangent directions at $a$, respectively.
Using the canonical and variation maps $\xymatrix@1{\near(F) \ar@<.3ex>[r]^c &  \ar@<.3ex>[l]^v \van(F),}$ one may upgrade the vector spaces $\near(F)$ and $\van(F)$ to local systems on $S_a X$ and $S^*_a X$, with monodromies $1-vc$ and $1-cv$, respectively. Then, one has
\begin{equation}\label{eq:intronupsi}
\near(F) \simeq \nu^\sph_{\st a}(F), \quad
\van(F) \simeq \mu^\sph_{\st a}(F)[1],
\end{equation}
where $\nu^\sph_{\st a}$ and $\mu^\sph_{\st a}$ denote the traces on $S_a X$ and $S^*_a X$ of Sato's specialization $\nu_{\st a}$ and microlocalization $\mu_{\st a}$ functors, respectively.

\subsection{}\label{sse:rb}
Let $\RB[a]X$ be the real oriented blow-up of $X$ with center $a$, and consider the natural embeddings
\begin{equation}\label{eq:introRB}
\xymatrix@R=2ex{
S_a X \ar@{^(->}[r]^-i & \RB[a] X
& X\setminus\{a\} \ar@{^(->}[r]^-{j_a} \ar@{_(->}[l]_-j & X.
}
\end{equation}
Recall that one has
\[
\nu_{\st a}^\sph(F) \simeq \opb i \roim j \opb j_a F,
\]
where $\opb i$, $\roim j$ and $\opb{j_a}$ denote the external operations for sheaves.

\subsection{}
Let $\shm$ be a (not necessarily regular) holonomic $\D_X$-module, 
 let $a\in X$ be one of its singularities, 
and let $F\defeq\dr(\shm)$ be its de Rham complex, which is a perverse sheaf. 
If $\shm$ is regular, the classical Riemann-Hilbert correspondence implies that $\shm$ can be reconstructed  near $a$  from the quiver description of $F$. If $\shm$ is irregular, a result of Deligne and Malgrange (see \cite{DMR}) implies that $\shm$ can be reconstructed  near $a$  by further considering the so-called Stokes filtration\footnote{The Stokes filtration depends on $\shm$, and not only on $F$.} $\fnear \bullet(F,\shm)$ of $\near(F)$, indexed by Puiseux germs, defined as follows.
Let $(a,\theta,f)$ be a Puiseux germ, that is, a holomorphic function $f$ on a small sector around $\theta\in S_aX$, which admits a Puiseux series expansion at $a$.
For $(a,\theta,g)$ another germ, the order relation $g\aleq\theta f$ means that $\Re(g-f)$ is bounded from above on a small sector around $\theta$.
Then, an element $u$ of the stalk $\bl\fnear f(F,\shm)\br_\theta$ is a section of the de Rham complex of $\shm$ in a sectorial neighborhood of $\theta$ such that $\e^{-f}u$ has tempered growth at $a$. It turns out that the graded component $\gnear f(F,\shm)$ is a locally constant sheaf on $S_a X$.

\subsection{}\label{sse:RH}
In \cite{DK16} we established an extension of the classical Riemann-Hilbert correspondence to the irregular case, in the framework of enhanced ind-sheaves, which has the advantage of working in any dimension.
More precisely, there is a quasi-commutative diagram
\begin{equation}\label{eq:introRH}
\xymatrix@R=3ex@C=3em{
\Mod_\reghol(\D_X) \ar@{ >->}[d]^\iota \ar[r]^\dr_\sim & \Perv(\C_X) \ar@{ >->}[d]^{e\,\iota} \\
\Mod_\hol(\D_X) \ar[r]^{\dr^\enh}_\sim & \Eperv(\ind\C_X).
}
\end{equation}

Here, $\iota$ embeds regular holonomic $\D$-modules into holonomic $\D$-modules which are not necessarily regular, $e\,\iota$ embeds perverse sheaves into enhanced ones\footnote{This refers to the present case of dimension one. In higher dimension there is still no explicit description of the category of perverse enhanced sheaves. See however \cite{Moc16}, where such a category is described via a curve test.} (see Definition~\ref{def:ECc}), and $\dr^\enh$ is an enhancement\footnote{As $\dr^\enh$ is only briefly mentioned in this paper, we do not recall its definition, referring instead to \cite{DK16}.} of the de Rham functor $\dr$.

\subsection{}
In \cite[\S6.2]{DK18}  we described the Stokes filtration $\fnear \bullet(F,\shm)$  in terms of the enhanced de Rham complex $\dr^\enh(\shm)$. Here, using enhanced specialization and microlocalization from \cite{DK21a}, and making a more explicit use of the sheafification functor discussed in \cite{DK21b}, we propose a description of the Stokes filtration which sheds some light on the geometry underlying these constructions. We also discuss a tempered version of the vanishing cycles $\van(F)$ as follows.

\subsection{}
Let $\field$ be a field, and consider the natural embedding $e\,\iota\colon\BDC(\field_X)\to[\iota]\BDC(\ifield_X)\to[e]\BEC X$, of sheaves into ind-sheaves into enhanced ind-sheaves. Recall that $e\,\iota$ has a left quasi-inverse $\sh$ called sheafification functor.
We say that an enhanced ind-sheaf $K$ is of sheaf type if it lies in the essential image of $e\,\iota$. 

\subsection{}\label{sse:intropsiphi}
Let $K\in\Eperv(\ifield_X)$, and let $a\in X$ be a singularity of $K$. 
The nearby and vanishing cycles of $K$ are defined as follows.
Consider the bordered analogue of \eqref{eq:introRB}
\[
\xymatrix@R=2ex{
S_a X \ar@{^(->}[r]^-i & \RB[a] X
& \inb{(X\setminus\{a\})} \ar@{^(->}[r]^-{j_a} \ar@{_(->}[l]_-j & X,
}
\]
where $\inb{(X\setminus\{a\})}$ denotes the bordered space $(X\setminus\{a\},X)$.
We set
\begin{align*}
\near(K) 
&\defeq \nu_{\st a}^\sph\bl\sh(K)\br \\
&\simeq \opb i \roim j \opb j_a\sh (K) \\
&\simeq \opb i \roim j \sh \bl\Eopb j_a K\br, \\
\fnear 0 (K) 
&\defeq \opb i \sh \bl\Eoim j \Eopb j_a K\br, \\
 \gnear 0 (K)  &  \defeq \sh\bl \Eopb i \Eoim j \Eopb j_a K \br \\
&  \simeq \sh \bl \Enu_{\st a}^\sph(K)\br. 
\end{align*}
Here, $\Eopb i$, $\Eoim j$ and $\Eopb{j_a}$ denote the external operations for enhanced ind-sheaves, 
and  $\Enu_{\st a}$  is the natural enhancement of Sato's specialization.
Further, for $(a,\theta,f)$ a Puiseux germ, locally at  $\theta$  set 
\begin{align*}
\fnear f (K) &\defeq \fnear 0 \bl K(f)\br, \\
\gnear f (K) &\defeq \gnear 0 \bl K(f)\br, 
\end{align*}
where $K(f)$ is the twist of $K$ by an enhanced ind-sheaf which encodes the exponential growth $\e^f$ (see Definition~\ref{def:psiphi}). Finally, set
\begin{align*}
\van(K) &\defeq \mu_{\st a}^\sph\bl\sh(K)\br[1], \\
\van^0(K) &\defeq \sh\bl\Emu_{\st a}^\sph(K)\br[1],
\end{align*}
where $\Emu_{\st a}$ is the natural enhancement of Sato's microlocalization.

As it turns out, $\Enu_{\st a}(K)$ and $\Emu_{\st a}(K)$ are of sheaf type.

\subsection{}
If $\field=\C$, and $K=\dr^\enh(\shm)$ is the enhanced de Rham complex of a holonomic $\D_X$-module $\shm$ then, setting $F\seteq\dr(\shm)\simeq\sh(K)$, one has by definition
\[
\near(K)\simeq\near(F), 
\quad \van(K)\simeq\van(F).
\]
Moreover, one has
\[
\fnear\bullet (K) \simeq \fnear\bullet (F,\shm),
\quad \gnear\bullet (K) \simeq \gnear\bullet (F,\shm).
\]
Also note that $\van^0(K) \simeq\van(F)$ only if $\shm$ is regular.

\subsection{}\label{sse:intropsiphiM}
Recall that an exponential factor at $a$ of a holonomic $\D_X$-module $\shm$ is a Puiseux germ $(a,\theta,f)$ where the Stokes filtration $\fnear\bullet(F,\shm)$ jumps. Assume for simplicity that the exponential factors of $\shm$ are unramified, so that $f$ is a germ of meromorphic function with  a pole  at $a$.
Let $N_\theta^{>0}$ be a set of representatives of the exponential factors of $\shm$, modulo bounded functions. We can assume that if $f\in N_\theta^{>0}$ is bounded, then $f=0$. 

For $f$ unbounded, let $\she^f$ be the germ of $\D_X$-module at $a$ associated with the meromorphic connection $d+df$. Set $\she^0=\O_X$.
The Hukuara-Levelt-Turrittin decomposition theorem asserts that
\[
\shm^{\operatorname{formal}}_a \simeq \DSum_{f\in N_\theta^{>0}} \bl \shl_f\dtens\she^{-f} \br^{\operatorname{formal}}_a.
\]
Here, $\shm^{\operatorname{formal}}_a$ is the formal $\D$-module at $a$ associated with $\shm$, $\dtens$ is the inner product for $\D_X$-modules, and $\shl_f$ is a regular holonomic $\D_X$-module. 

Set $K=\dr^\enh(\shm)$ and $L_f=\dr(\shl_f)$. 
Then, we have
\[
\Enu_{\st a}(K) \simeq e\bl\nu_{\st a}(L_0)\br, \quad
\Enu^\sph_{\st a}\bl K(f)\br \simeq e\bl\nu^\sph_{\st a}(L_f)\br.
\]

\subsection{}
We give an application of the above constructions to the study of the Fourier-Laplace transform in dimension one.

Let $\V$ be a one-dimensional complex vector space, and $\W$ the dual vector space.
Set $\V_\infty \defeq (\V,\PP)$, where $\PP=\V\cup\{\infty\}$ is the 
projective compactification, and similarly define $\W_\infty$ and $\bb$.

Let $K\in\Eperv(\ifield_{\V_\infty})$, and set $\plap K\defeq\lap K[1]$.
(Here, the shift ensures compatibility with the Riemann-Hilbert correspondence.)
Assume that $\plap K$ is an enhanced perverse ind-sheaf on $\W_\infty$.
For $a\in\PP$, let $(a,\theta,f)$ be a Puiseux germ on $\PP$ such that $f$ is unbounded and \emph{not linear} (modulo bounded functions). Then its Legendre transform $(b,\eta,g)$ is a Puiseux germ on $\bb$ of the same kind.
The stationary phase formula states that there is an isomorphism
\begin{equation}
\label{eq:introPhiPhi}
\bl\gnear[b] g(\plap K)\br_\eta \simeq \bl\gnear f(K)\br_\theta.
\end{equation}
This is a classical result for holonomic $\D$-modules, and we gave a proof for enhanced ind-sheaves in \cite{DK18}.

\subsection{}
Here we consider the case of linear Puiseux germs, excluded from \eqref{eq:introPhiPhi}, which goes as follows.
For $a\in \V$, denote by $\ell_a$ the corresponding \emph{linear} function on $\W$.
Consider the natural identifications $S_\infty\bb\simeq S_a^*\V$ and $S_b^*\W\simeq S_\infty\PP$, for $b\in\W$. Then, there are isomorphisms
\begin{equation}
\label{eq:introPhiGPsi}
\gnear[\infty]{\ell_a}(\plap K) \simeq \van^0(K), \quad
\van[b]^0(\plap K) \simeq \opb r\gnear[\infty]{-\ell_b}(K),
\end{equation}
where $r$ is the antipodal map.

Our proof of \eqref{eq:introPhiGPsi} proceeds as follows.
The second isomorphism is obtained from the first one by interchanging $\V$ and $\W$, and replacing $K$ by $\plap K$.
After translation from $a$ to $0$, the first isomorphism reads
\[
\gnear[\infty]0(\plap K) \simeq \van[0]^0(K).
\]
By definition, this is implied by the isomorphism
\[
\Enu^\sph_{\st \infty}(\plap K) \simeq \Emu^\sph_{\st 0}(K).
\]
We prove the above isomorphism using the so-called smash functor of \cite[\S6]{DHMS18}, in its enhanced version from \cite[\S6]{DK21a}.

\subsection{}
Concerning related literature, the $\D$-module counterpart
of \eqref{eq:introPhiGPsi} is proved in \cite{Mal91} when $\field=\C$ and $K=\dr^\enh(\shm)$ is the enhanced de Rham complex of a holonomic algebraic $\D_\V$-module $\shm$ which is regular at finite distance, and has only linear exponential factors at infinity. Note that, in this case, $\lap\shm$ satisfies the same conditions.

In the framework of enhanced ind-sheaves, a proof of \eqref{eq:introPhiGPsi} is given in \cite{DHMS18}, in the case where $K=e\iota(F)$, for $F$ a perverse sheaf\footnote{For $F=\dr(\shm)$, this means that $\shm$ is regular everywhere, including at infinity.} on $\V_\infty$. 

See \cite{Moc21} for a recent thorough treatment of the Fourier-Laplace transform of holonomic algebraic $\D$-modules on the affine line.

\subsection{}
The contents of this paper are as follows.

After recalling some notations in Section~\ref{se:not}, we recall in Section~\ref{se:perv} the notion of perverse enhanced ind-sheaf on a complex analytic curve. For such a perverse object, we show that its specialization and microlocalization are perverse sheaves in the classical sense.

In Section~\ref{se:psiphi} we discuss nearby and vanishing cycles along the lines presented in \S\ref{sse:intropsiphi} above.

In Section~\ref{se:Fou} we apply our constructions to the Fourier-Laplace transform in dimension one. In particular, we give a proof of \eqref{eq:introPhiGPsi}.

Finally, we present in the Appendix an alternative description of vanishing cycles in terms of blow-up transforms. In this setting, both nearby and vanishing cycles are realized on the circle of normal directions $S_a X$.

\section{Review on enhanced ind-sheaves}\label{se:not}

We recall here some notions and results, mainly to fix notations, referring to the literature for details. In particular, we refer to \cite{KS90} for sheaves, to \cite{Tam08} (see also \cite{GS14,DK16bis}) for enhanced sheaves, to \cite{KS01} for ind-sheaves, to \cite{DK16} (see also \cite{KS16,DK16bis,DK21b}) for bordered spaces and enhanced ind-sheaves, and to \cite{DK21a} for enhanced specialization and microlocalization.

\medskip

In this paper, $\field$ denotes a base field.

\subsection{Ind-sheaves and bordered spaces}

A good space is a topological space which is Hausdorff, locally compact, countable at infinity, and with finite soft dimension. 
Let $M$ be a good space.

Denote by $\Mod(\field_M)$ the category of sheaves of $\field$-vector spaces on $M$, and by $\BDC(\field_M)$ its bounded derived category. For $f\colon M\to N$ a morphism of good spaces, denote by $\tens$, $\opb f$, $\reim f$ and $\rhom$, $\roim f$, $\epb f$ the six operations.
 Denote by $\dual_M$ the Verdier dual.

For $S\subset M$ locally closed, we denote by $\field_S$ the extension by zero to $M$ of the constant sheaf on $S$ with stalk $\field$.

A bordered space is a pair $\bbM=(M,\ccM)$ with $M$ an open subset of a good space $\ccM$. We set $\obM\defeq M$ and $\cbM\defeq\ccM$.
A morphism $f\colon\bbM\to\bbN$ of bordered spaces is a morphism $\of\colon \obM\to \obN$ of good spaces such that the projection $\overline\Gamma_f\to \cbM$ is proper. Here, $\overline\Gamma_f$ denotes the closure in $\cbM\times\cbN$ of the graph $\Gamma_f$ of $\of$.
The morphism $f$ is called semi-proper if the projection $\overline\Gamma_f\to \cbN$ is proper. 

By definition, a subset $Z$ of $\bbM$ is a subset of $\obM$.
We say that $Z$ is relatively compact in $\bbM$ if it is contained in a compact subset of $\cbM$. 
For $Z\subset\bbM$ locally closed, we set $\bZ=(Z,\overline Z)$ where $\overline Z$ is the closure of $Z$ in $\cbM$. 

Let $\Mod_c(\field_\bbM)\subset\Mod(\field_\obM)$ be the full subcategory of sheaves on $\obM$ whose support is relatively compact in $\bbM$. We denote by $\Mod(\ifield_\bbM)$ the category of ind-sheaves on $\bbM$, that is the category of ind-objects with values in $\Mod_c(\field_\bbM)$. 
We denote by $\BDC(\ifield_\bbM)$ the bounded derived category of ind-sheaves of $\field$-vector spaces on $\bbM$, and by $\tens$, $\opb f$, $\reeim f$ and $\rihom$, $\roim f$, $\epb f$ the six operations.

We denote by $\iota_\bbM\colon\BDC(\field_\obM)\to\BDC(\ifield_\bbM)$ the natural embedding,
by $\alpha_\bbM$ its left adjoint, and we set $\rhom\defeq\alpha_\bbM\rihom$.
For $F\in \BDC(\field_\obM)$,
we often write simply $F$ instead of $\iota_\bbM F$
in order to make notations less heavy.

Recall that $\iota$ commutes with $\roim{\oo f}$, $\oo f{}^{-1}$ and $\oo f{}^!$, but it does not commute in general with $\reeim{\oo f}$.
If $f$ is semi-proper, then 
\begin{equation}\label{eq:semipr}
\iota_\bbN\,\reim{\oo f} \simeq \reeim f\,\iota_\bbM.
\end{equation}

\subsection{Enhanced ind-sheaves}

Denote by $t\in\R$ the coordinate on the affine line, consider the two-point compactification $\overline\R\defeq\R\cup\st{-\infty,+\infty}$, and set $\bR\defeq(\R,\overline\R)$.
For $\bbM$ a bordered space, consider the projection 
\[
\pi_\bbM\colon \bbM\times\bR\to \bbM.
\]

Denote by $\BEC \bbM\defeq\BDC(\ifield_{\bbM\times\bR})/\opb\pi_\bbM\BDC(\ifield_\bbM)$ the bounded derived category of enhanced ind-sheaves of $\field$-vector spaces on $\bbM$. Denote by $\quot_\bbM\colon\BDC(\ifield_{\bbM\times\bR})\to\BEC\bbM$ the quotient functor.
Recall that there is a natural splitting $\BEC\bbM\simeq\BECp\bbM\dsum\BECm\bbM$.

For $f\colon \bbM\to \bbN$ a morphism of bordered spaces, denote by $\ctens$, $\Eopb f$, $\Eeeim f$ and $\cihom$, $\Eoim f$, $\Eepb f$ the six operations.  
 Recall that the external operations are induced via $\quot$ by the corresponding operations for ind-sheaves with respect to the morphism $f_\R \defeq f \times \id_\bR$. 
Denote by $\Edual_\bbM$ the Verdier dual.
Denote by $\fhom$ the hom functor taking values in $\BDC(\field_\obM)$.

One sets
\[
\Efield_\bbM\defeq \quot_\bbM\bl \indlim[c\to+\infty]\field_{\st{t\geq c}} \br,
\]
writing for short $\st{t\geq c} = \st{(x,t)\in\obM\times\R\semicolon t\geq c}$.

There are embeddings
\begin{align*}
\epsilon^\pm_\bbM&\colon\BDC(\ifield_\bbM) \rightarrowtail \BECpm\bbM, &
F&\mapsto \quot_\bbM\field_{\st{\pm t\geq 0}}\tens\opb{\pi_\bbM} F, \\
e_\bbM&\colon\BDC(\ifield_\bbM) \rightarrowtail \BECp\bbM, &
F&\mapsto \Efield_\bbM\tens\opb{\pi_\bbM} F 
\simeq \Efield_\bbM\ctens\epsilon^+_\bbM(F).
\end{align*}
Recall that $e$ commutes with $\reeim f$, $\opb f$ and $\epb f$, but it does not commute in general with $\roim f$.

The functor $e_\bbM$ has as a left quasi-inverse the sheafification functor
\[
\sh_\bbM\colon\BECp\bbM\to\BDC(\field_\obM), \quad
K\mapsto\fhom(\Efield_\bbM,K).
\]
We call $\sh_\bbM(K)$ the sheaf associated with $K$.
We say that $K\in\BECp\bbM$ is of sheaf type if it is in the essential image of $e_\bbM\iota_\bbM$.
This is a local property\footnote{A property is local on $\bbM$ if any $x\in\cbM$ has an open neighborhood $V\subset\cbM$ such that the property holds on the associated bordered space $\inb{(V\cap\obM)}$} on $\bbM$.
The full subcategory of $\BECp\bbM$ consisting of objects
of sheaf type is closed by extensions,
and equivalent to $\BDC(\field_\obM)$.

For $U\subset\bbM$ an open subset, and $\varphi\colon\inb U\to\bR$ a morphism of bordered spaces, we set
\begin{equation}\label{eq:Ex}
\ex^\varphi_{U|\bbM} \defeq \quot_\bbM\field_{\st{t+\varphi(x)\geq0}}, \quad
\Ex^\varphi_{U|\bbM} \defeq \Efield_\bbM\ctens\ex^\varphi_{U|\bbM},
\end{equation}
writing for short $\st{t+\varphi(x)\geq0}=\st{(x,t)\in U\times\R\semicolon t+\varphi(x)\geq0}$.

\subsection{Specialization and microlocalization}\label{sse:Founumu}

Let $N$ be a smooth manifold, $V\to N$ an $\R$-vector bundle, and $\mathbb{S} V$ its fiberwise sphere compactification given by $\mathbb{S} V\defeq\bl(\R\times V)\setminus(\st0\times N)\br/\Gmp$.
Set $\inb V\defeq(V,\mathbb{S} V)$.
Let $V^*\to N$ be the dual bundle.

The enhanced Fourier-Sato transforms
\begin{align*}
(\ast)^\wedge &\colon \BECp {\inb V} \to \BECcon{\inb{V^*}}, \\
\lap{(\ast)} &\colon \BECp{\inb V}  \to \BECp{\inb{V^*}},
\end{align*}
are the integral transforms with kernel, respectively,
\[
\Fou \defeq \epsilon^+_{\inb{(V\times V^*)}}\field_{\st{\langle v,w\rangle\leq 0}}, \quad
\Lap \defeq \ex_{V\times V^*|\inb{(V\times V^*)}}^{-\langle v,w\rangle}
\]
Here\footnote{ What we denote here by $\BECcon{\inb{V^*}}$ corresponds to $\BECp{\inb{V^*}}\cap\BECcon{\inb{V^*}}$ in the notations of \cite{DK21a}}, $\BECcon{\inb{V^*}}$ is the full triangulated subcategory of $\BECp{\inb{V^*}}$ whose objects are conic for the natural action of the group object $\inb{(\Gmp)}$.  Recall that $\lap{(\ast)}$ and $(\ast)^\wedge$ agree on conic objects.

Let $M$ be a smooth manifold, $N\subset M$ a submanifold, and denote by
\[
\xymatrix{T_NM \ar[r]^-\tau & N & T^*_MN \ar[l]_-\varpi}
\]
the normal and conormal bundles.
Consider the normal deformation $\pnd\colon\ND M\to M$ with center $N$, and the associated commutative diagram of bordered spaces
\[
\xymatrix@R=3ex@C=8ex{
\inb{(T_NM)} \ar@{^(->}[r]^-{i_\nd} \ar[d]_\tau & \inb{(\ND M)} 
\ar[d]_{\pnd} \ar[r]^{\snd}&\inb{\R}\\
N \ar@{^(->}[r]_-{i_N} \ar@{}[ur]|-\square & M
&\inb\Omega \ar@{_(->}[lu]_-{j_\nd} \ar[l]^-{p_\Omega}
\ar[r]\ar@{}[u]|-\square 
&\inb{(\R_{>0})}\ar@{_(->}[lu],
}
\]
where $\inb{(\ND M)}$ is the bordered compactification of $\pnd$, and $\Omega\defeq\snd^{-1}(\R_{>0})$.
Sato's specialization and microlocalization functors have natural enhancements
\begin{align*}
\Enu_N &\colon \BECp N \to \BECcon{\inb{(T_NM)}}, \\
\Emu_N &\colon \BECp N \to \BECcon{\inb{(T_NM)}},
\end{align*}
defined by
\begin{align*}
\Enu_N(K) &\defeq \Eopb{i_\nd}\Eoim{{j_\nd}}\Eopb{p_\Omega}K, \\
\Emu_N(K) &\defeq \lap\Enu_N(K)\simeq\Enu_N(K)^\wedge.
\end{align*}

Denoting by $\bdot T_NM$ the complement of the zero-section, and setting $S_NM \defeq \bdot T_NM/\Gmp$, consider the natural morphisms
\[
\xymatrix{\inb{(T_NM)} & \inb{(\bdot T_NM)} \ar[l]_-u \ar[r]^-\gamma & S_NM}.
\]
We set
\[
\Enu_N^\sph\defeq\Eoim\gamma\Eopb u\Enu_N, 
\]
so that $\Eopb u\Enu_N\simeq\Eopb\gamma\Enu_N^\sph$.
We similarly define $\Emu_N^\sph$.

Consider the real oriented blowup $\prb\colon\RB M\to M$ with center $N$, and the associated commutative diagram of bordered spaces
\begin{equation}
\label{eq:blowMN}
\xymatrix@R=3ex{
S_NM \ar@{^(->}[r]^-{i_\rb} \ar[d]_\sigma & \RB M \ar[d]_{\prb}
& \inb{(M\setminus N)} \ar@{_(->}[dl]^-{j_N} \ar@{_(->}[l]_-{j_\rb} \\
N \ar@{^(->}[r]^-{i_N} \ar@{}[ur]|-\square & M.
}
\end{equation}
One has an associated functor
\[
\Enu_N^\rb \colon \BECp N \to \BECp{S_NM}, \quad 
K\mapsto\Eopb i_\rb\Eoim{{j_\rb}}\Eopb j_N K.
\]
Note that one has
\begin{equation}\label{eq:sphrb}
\Enu_N^\sph\simeq\Enu_N^\rb.
\end{equation}

\subsection{Constructibility}

Let $\bbM$ be a subanalytic bordered space.

We denote by $\BDC_\Rc(\field_\bbM)$ the full triangulated subcategory of $\BDC(\field_\obM)$ whose objects $F$ are such that $\reim{{j_\bbM}}F$ is $\R$-constructible in $\cbM$.
Here, $j_\bbM\colon\bbM\hookrightarrow\cbM$ is the natural morphism.

We denote by $\BECRc\bbM$ the full triangulated subcategory of $\BECp \bbM$ whose objects $K$ satisfy the following property. For any open relatively compact subanalytic subset $U\subset\bbM$ there exists $F\in\BDC_\Rc(\field_{\bbM\times\bR})$ such that $\opb\pi\field_U\tens K \simeq \Efield_\bbM\ctens \quot_\bbM\, F$.

\section{Enhanced perverse ind-sheaves on a curve}\label{se:perv}

In this section we let $X$ be a smooth complex curve.

\subsection{Normal form}

Consider the real oriented blow-up $\RB[a] X$ of $X$ with center $a\in X$ as in \eqref{eq:blowMN},
and the associated natural morphisms
\begin{equation}
\label{eq:blowonline}
\xymatrix@R=3ex{
S_a X \ar@{^(->}[r]^-i & \RB[a] X
& \inb{(X\setminus\{a\})} \ar@{^(->}[r]^-{j_a} \ar@{_(->}[l]_-j & X\,,
}
\end{equation}
where we write for short $i=i_\rb$, $j=j_\rb$, and $j_a=j_{\st a}$.

A \emph{sectorial neighborhood} of $\theta\in S_a X$ is an open subset $U\subset X\setminus\st a$ such that $S_a X\cup  j (U)$ is a neighborhood of $\theta$ in $\RB[a] X$. We write $U \dotowns \theta$ to indicate that $U$ is a sectorial neighborhood of $\theta$.
We say that $U\subset X\setminus \st a$ is a sectorial neighborhood of $Z\subset S_a X$, and we write $U\dotsupset Z$, if $U$ is a sectorial neighborhood of each $\theta\in Z$.

The sheaf $\shp_{S_a X}$ of \emph{Puiseux germs} on $S_a X$ is the subsheaf of $\opb i\oim j\opb j_a\O_X$ whose stalk at $\theta\in S_aM$ are holomorphic functions on small sectors
$V\dotowns\theta$ admitting a Puiseux expansion at $a$. 
We denote by $\oshp_{S_a X}$ the quotient of $\shp_{S_a X}$ modulo bounded functions, and we denote by $[f]\in\oshp_{S_a X}$ the equivalence class of $f\in\shp_{S_a X}$.

For $f\neq0$, we set $\ord_a(f)=-n_0/m$ if $f$ has a Puiseux expansion $\sum_{n\geq n_0}c_n z_a^{n/m}$ with $c_{n_0}\neq 0$, where $n,n_0\in\Z$, $m\in\Z_{>0}$, and $z_a$ is a local coordinate at $a$ with $z_a(a)=0$. We set $\ord_a(0)=-\infty$.
Note that $f$ is bounded if and only if $\ord_a(f)\leq 0$. 

\begin{definition}
One says that $K\in\BECRc X$ has \emph{normal form} at $\theta\in S_a X$ if there exist a finite subset $\Phi_\theta\subset\shp_{S_a X,\theta}$ and integers $n_\theta(f)\in\Z_{> 0}$ for $f\in\Phi_\theta$ such that
\begin{equation}
\label{eq:Knormal}
\opb\pi\field_{V_\theta}\tens K \simeq \DSum_{f\in\Phi_\theta} \bl\Ex^{\Re f}_{V_\theta|X}\br^{n_\theta(f)}[1]
\end{equation}
for some $V_\theta\dotowns\theta$. (Recall that $\Ex^{\Re f}_{V_\theta|X}$ was defined in \eqref{eq:Ex}.)
One says that $K$ has normal form at $I\subset S_a X$ if it has normal form at any $\theta\in I$. One says that $K$ has normal form at $a$ if it has normal form at $S_a X$.
\end{definition}

If $K$ has normal form at a connected open subset $I\subset S_a X$, and $f\in\shp_{S_a X}(I)$, the number
\[
\ol N([f]) = \sum_{h\in\Phi_\theta,\ [f]=[h]}n_\theta(h)
\]
is finite, does not depend on the choice of $\theta\in I$, and only depends on the class $[f]$ of $f$.
If $\ol N([f])>0$, one says that $[f]$ is an \emph{exponential factor} of $K$, and $\ol N([f])$ 
is called its \emph{multiplicity}.

\begin{proposition}\label{pro:nuSa}
Let $K\in\BECRc X$ have normal form at $I\subset S_a X$. Then $\Enu^\rb_{\st a}(K)|_I$ is of sheaf type. More precisely, $\Enu^\rb_{\st a}(K)|_I\simeq e\iota(L)$ for $L\in\Mod(\field_I)$ a local system of rank $\ol N([0])$.
\end{proposition}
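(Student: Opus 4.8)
The plan is to reduce, via the normal form \eqref{eq:Knormal}, to the enhanced specialization of a single exponential summand.

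\emph{Reduction to one summand.}
Being of sheaf type is local on $S_a X$, the full subcategory of sheaf-type objects of $\BECp{S_a X}$ is equivalent to $\BDC(\field_{S_a X})$, and on each connected component of $I$ the number $\ol N([0])$ is constant (by the discussion preceding the statement). It thus suffices to prove that each $\theta\in I$ has a neighbourhood in $S_a X$ on which $\Enu^\rb_{\st a}(K)$ is $e$ of a local system of rank $\ol N([0])$; these local isomorphisms then glue. Fix such a $\theta$ and a sectorial neighbourhood $V_\theta\dotowns\theta$ on which \eqref{eq:Knormal} holds; then $S_a X\cup j(V_\theta)$ is an open neighbourhood of $\theta$ in $\RB[a] X$, the functors $\Eopb{j_a}$, $\Eoim j$, $\Eopb i$ are compatible with restriction to it, and $\Eoim j$ and $\Eopb i$ commute with finite direct sums. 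Hence \eqref{eq:Knormal} reduces us to computing, near $\theta$,
\[
C_f\defeq\Enu^\rb_{\st a}\bl\Ex^{\Re f}_{V_\theta|X}[1]\br ,\qquad f\in\Phi_\theta ,
\]
and to checking that $\bigoplus_{f\in\Phi_\theta}C_f^{\,n_\theta(f)}$ is $e$ of a local system of rank $\ol N([0])$.

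\emph{Regular summands ($[f]=[0]$).}
If $f$ is bounded on $V_\theta$, say $|\Re f|\le c$, the inclusions $\{t\ge c\}\subseteq\{t+\Re f\ge0\}\subseteq\{t\ge -c\}$ of subsets of $V_\theta\times\R$, together with the $t$-translation invariance of $\Efield_X$, give $\Ex^{\Re f}_{V_\theta|X}\simeq\Ex^0_{V_\theta|X}\simeq e_X(\field_{V_\theta})$. Since $e$ commutes with $\Eopb{j_a}$ and $\Eopb i$, and since on such a sheaf-type (i.e.\ regular) input the enhanced pushforward $\Eoim j$ stays of sheaf type and is computed by the classical $\roim j$, we get $C_f\simeq e\bl\nu^\sph_{\st a}(\field_{V_\theta})\br$ up to shift, the $e$ of the trace on $S_a X$ of the classical specialization of $\field_{V_\theta}$. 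As $\theta$ is interior to the arc $\ol{V_\theta}\cap S_a X$, this is, near $\theta$, $e$ of the rank-one constant sheaf.

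\emph{Irregular summands ($[f]\neq[0]$): the crux.}
Here I claim $C_f=0$ on a neighbourhood of $\theta$. This is where growth information enters. By \eqref{eq:sphrb}, $\Enu^\rb_{\st a}$ is the spherical avatar of $\Enu_{\st a}$, which takes values in the conic subcategory, whereas a genuine exponential datum $\e^f$ (i.e.\ $\ord_a(f)>0$) is not conic for the $\Gmp$-action: Sato-type specialization records only moderate growth. Concretely, pulling back along the normal deformation makes the exponent $\Re f$ homogeneous of negative degree $-\ord_a(f)$ in the deformation parameter, hence it blows up to $\pm\infty$ on the central fibre; accordingly $\Eoim j\,\Ex^{\Re f}_{V_\theta|X}$ either becomes the zero object of the quotient category or has empty $t$-support there, so its restriction to $S_a X$ vanishes near $\theta$. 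In writing this out I would invoke the explicit computation of enhanced specialization on exponential enhanced ind-sheaves from \cite{DK19}.

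\emph{Assembly and the main obstacle.}
Combining the two cases, near $\theta$ we obtain $\Enu^\rb_{\st a}(K)\simeq\bigoplus_{f\in\Phi_\theta}C_f^{\,n_\theta(f)}\simeq\bigoplus_{f\in\Phi_\theta,\,[f]=[0]}C_f^{\,n_\theta(f)}$, which is $e$ of a local system of rank $\sum_{f\in\Phi_\theta,\,[f]=[0]}n_\theta(f)=\ol N([0])$. Gluing these local isomorphisms over $I$ — using once more that the sheaf-type subcategory is equivalent to $\BDC(\field_I)$ and that $\ol N([0])$ is locally constant — yields $\Enu^\rb_{\st a}(K)|_I\simeq e(L)$ with $L\in\Mod(\field_I)$ a local system of rank $\ol N([0])$. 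The single genuinely non-formal point, and the expected main obstacle, is the vanishing $C_f=0$ for $[f]\neq[0]$: one must show that an irregular exponential contributes nothing to the specialization along $S_a X$. The delicate aspect is the behaviour of $\Eoim j\,\Ex^{\Re f}_{V_\theta|X}$ across the finitely many Stokes directions of $f$ inside $V_\theta$, where $\Re f$ passes from $+\infty$ to $-\infty$.
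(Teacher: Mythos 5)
Your overall architecture coincides with the paper's: reduce by locality and the normal form \eqref{eq:Knormal} to a single summand $\Ex^{\Re h}_{V_\theta|X}[1]$, observe that bounded exponents contribute $e$ of a rank-one constant sheaf near $\theta$, and show that unbounded exponents contribute zero, so that the rank adds up to $\ol N([0])$. The problem is that the one step you yourself identify as ``the single genuinely non-formal point'' is exactly the step you do not prove. For $\ord_a(f)>0$ you assert $\Enu^\rb_{\st a}\bl\Ex^{\Re f}_{V_\theta|X}\br\simeq0$ by a heuristic (``blows up to $\pm\infty$ on the central fibre; accordingly $\Eoim j\,\Ex^{\Re f}_{V_\theta|X}$ either becomes the zero object of the quotient category or has empty $t$-support'') and defer to an unspecified ``explicit computation from \cite{DK19}''. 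That dichotomy is precisely what breaks down at the Stokes directions, where $\Re f$ changes sign inside any sectorial neighborhood of such a $\theta$, and you concede this without resolving it; as it stands the crux is missing. The paper closes this gap by a concrete computation: after a change of variable and a ramification one may assume $h(z)=z_a^{-1}$, and then Lemma~\ref{lem:Exy} works on $M=\R_{\geq0}\times\R$ in blow-up coordinates $(\rho,s)$: one rewrites $\st{\rho>0,\ t+s/\rho\geq0}$ as $\st{\rho>0,\ s+\rho t\geq0}$, computes $\roim{j}$ of the constant sheaf classically as the constant sheaf on the closure $\st{\rho\geq0,\ s+\rho t\geq0}$, and restricts to $\rho=0$ to obtain $\field_{\st{s\geq0}}$. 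The point is that this is constant in $t$, i.e.\ a pullback by $\pi$, hence zero in the quotient category --- uniformly across the Stokes line $s=0$, and not because the $t$-support is empty or runs off to infinity. Some such argument (or an actual citation of a statement proving it) is what your proof needs and lacks.

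A secondary imprecision: in the bounded case you justify the rank-one contribution by saying that on sheaf-type input ``$\Eoim j$ stays of sheaf type and is computed by the classical $\roim j$''. This is not a general fact --- the paper explicitly recalls that $e$ does not commute with $\roim f$ in general --- so if you want to argue this way you should either prove it in this specific situation (for the constant object on a sectorial neighborhood, after composing with $\Eopb i$), or compute $\Eopb i\Eoim j\Eopb{j_a}\,e(\field_{V_\theta})$ directly; the conclusion itself is the one the paper treats as the easy case.
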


\begin{proof} 
The statement is a local problem on $I$.

Let $\theta\in I$. Since $K$ has normal form at $\theta$, there is an open neighborhood $I_\theta\owns\theta$ such that \eqref{eq:Knormal} holds with $V_\theta\dotsupset I_\theta$.
Thus, we can reduce to the case $K\simeq\Ex_{V_\theta|X}^{\Re h}[1]$ for $h\in\shp_{S_a X}(I_\theta)$. 
By definition of $\Enu^\rb_{\st a}$, it is then enough to check that
\[
\left.\Eopb i\Eoim j\Eopb j_a\Ex_{V_\theta|X}^{\Re h}\right|_{I_\theta} \simeq
\begin{cases}
e\iota(\field_{I_\theta}) &\text{if $\ord_a(h)\le0$,} \\
0 &\text{if }\ord_a(h)>0.
\end{cases}
\]
The statement is clear if $\ord_a(h)\le0$. If $\ord_a(h)>0$, after a change of variable and a ramification we can assume that $h(z)=z_a^{-1}$.
Hence, one concludes using Lemma~\ref{lem:Exy}.
\end{proof}

\Lemma\label{lem:Exy}
Let $M=\R_{\ge0}\times\R$ with coordinates $(\rho,s)$. Set $U=\st{\rho>0}$, 
$N=\st{\rho=0}$, and 
consider the embeddings
$N \To[i]  M \xleftarrow{\ j\ } \inb U$. Then, one has
\eq
&&\ex_{U|M}^{s/\rho} \simeq \Eoim j\Eopb j \ex_{U|M}^{s/\rho},\label{eq:UM1}\\
&&\Eopb i \Eoim j \Eopb j \ex_{U|M}^{s/\rho} \simeq 0,\label{eq:UM2}\\
&&\sh_M\bl \Ex_{U|M}^{s/\rho}\br \simeq\cor_{\st{\rho>0}\cup\st{s<0}}.
\label{eq:UM3}
\eneq
\enlemma

\begin{proof}
 (i) One has
\begin{align*}
\ex_{U|M}^{s/\rho} 
&= \quot_M\field_{\{\rho>0,\ t+s/\rho\geq0\}} \\
&= \quot_M\field_{\{\rho>0,\ s+\rho t\geq0\}}, \\
\Eoim j \Eopb j \ex_{U|M}^{s/\rho}
&\simeq \quot_M \roim {{j_\R}} \opb j_\R\field_{\{\rho>0,\ s+\rho t\geq 0\}} \\
&\simeq \quot_M \field_{\{\rho\geq0,\ s+\rho t\geq 0\}} .
\end{align*}
Consider the distinguished triangle
\[
\field_{\{\rho>0,\ s+\rho t\geq 0\}} \to
\field_{\{\rho\geq0,\ s+\rho t\geq 0\}} \to
\field_{\{\rho=0,\ s\geq 0\}} \to[+1].
\]
Since $\quot_M \field_{\{\rho=0,\ s\geq 0\}} \simeq 0$, \eqref{eq:UM1} follows.

\smallskip\noindent(ii)
\eqref{eq:UM2} is implied by \eqref{eq:UM1}, since $\Eopb i\ex_{U|M}^{s/\rho} \simeq \quot_N\opb i_\R\field_{\{\rho>0,\ s+\rho t\geq0\}} \simeq 0$.

\smallskip\noindent(iii)
By \cite[Corollary 3.7]{DK21b}, denoting by $\LE$ the left adjoint to $\quot$, one has
\begin{align*}
\sh_M\bl \Ex_{U|M}^{s/\rho}\br 
&\simeq \roim\pi \LE \ex_{U|M}^{s/\rho} \\
&\simeq \roim\pi \field_{\{\rho>0,\ s+\rho t\geq0\}} \\
&\simeq \dual_M\reim\pi \dual_{M\times\R}\field_{\{\rho>0,\ s+\rho t\geq0\}}\\
&\simeq \bl\dual_M\reim\pi \field_{\{\rho\geq0,\ s+\rho t>0\}}\br[-3] \\
&\simeq \bl\dual_M \field_{\{\rho>0\}\cup\{s>0\}}\br[-2] \\
&\simeq \field_{\{\rho>0\}\cup\{s<0\}}.
\end{align*}

\end{proof}

\subsection{Perversity}
 For $a\in X$, let $i_a\colon \{a\} \to X$ be the embedding. 
Recall that $F\in\BDC_\Rc(\field_X)$ is perverse if and only if there exists a discrete subset $\Sigma\subset X$ such that:
\begin{itemize}
\item[(a)]
$H^n \opb i_a F = 0$ for any $n>0$ and $a\in\Sigma$;
\item[(b)]
$H^n \epb{i_a} F = 0$ for any $n<0$ and $a\in\Sigma$;
\item[(c)]
$F|_{X\setminus\Sigma}\simeq L[1]$, for $L\in\Mod(\field_{X\setminus\Sigma})$ a local system of finite rank.
\end{itemize}
Denote by $\Perv(\field_X)\subset\BDC_\Rc(\field_X)$ the full triangulated subcategory of perverse sheaves.

\begin{definition}\label{def:ECc}
\bnum
\item
We say that $K\in\BECRc X$ is $\C$-constructible
if there exists a discrete subset $\Sigma\subset X$ such that:
\bna
\item for any $n\in\Z$,
$H^n(K)|_{X\setminus\Sigma}\simeq e\iota( L_n)$ for a local system $ L_n$ on $X\setminus\Sigma$
of finite rank,
\item for any $n\in\Z$,
$H^{n}(K)$ has normal form at any $a\in\Sigma$.
\ee
Denote by $\BECCc X\subset\BECRc X$ the full triangulated subcategory of 
$\C$-constructible enhanced ind-sheaves.

\item
We say that $K\in\BECRc X$ is an enhanced perverse ind-sheaf if there exists a discrete subset $\Sigma\subset X$ such that:
\bna
\item
$H^n \Eopb i_a K = 0$ for any $n>0$ and $a\in\Sigma$;
\item
$H^n \Eepb i_a K = 0$ for any $n<0$ and $a\in\Sigma$;
\item
$K|_{X\setminus\Sigma}\simeq e\iota(L[1])$, for $L\in\Mod(\field_{X\setminus\Sigma})$ a local system of finite rank,
\item
$H^{-1}(K)$ has normal form at any $a\in\Sigma$.
\ee
Denote by $\Eperv(\ifield_X)\subset\BECCc X$ the full subcategory of enhanced perverse ind-sheaves.
\ee
\end{definition}

\Lemma 
The functor 
$e\,\iota\cl \BDC(\cor_X)\to \BECp X$ sends $\Perv(\cor_X)$ to  $\Eperv(\ifield_X)$,
and the functor 
$\sh\cl \BECp X\to \BDC(\cor_X)$ sends $\Eperv(\ifield_X)$ to $\Perv(\cor_X)$.
\enlemma

\begin{proof}
The first statement is clear from the definitions. The second statement follows using Lemma~\ref{lem:Exy}.
\end{proof}

Note that $\Eperv(\ifield_X)$ is an abelian subcategory of the quasi-abelian heart ${}^{1/2}\enh^0_\Rc(\ifield_X)$ for the middle perversity $t$-structure introduced in \cite{DK16bis}. 
Note also that, using \cite[Proposition 4.1.2]{DK18} (see also \cite[Proposition 3.28]{Moc16}), one has

\begin{theorem}
The enhanced de Rham functor induces an equivalence between $\Eperv(\ind\C_X)$ and the category of holonomic $\D_X$-modules.
\end{theorem}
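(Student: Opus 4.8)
The plan is to combine the irregular Riemann--Hilbert correspondence of \cite{DK16} with the local structure theorem for enhanced de Rham complexes of \cite[Proposition 4.1.2]{DK18} (see also \cite[Proposition 3.28]{Moc16}). By \cite{DK16}, $\dr^\enh$ is fully faithful from $\derb_\hol(\D_X)$ into $\BEC X$, and, on a curve, its essential image is $\BECCc[\ind\C]{X}$; it is moreover compatible with inverse and exceptional inverse images and restricts, over the smooth locus, to the classical de Rham functor. It therefore suffices to prove that $\dr^\enh$ sends $\Mod_\hol(\D_X)$ into $\Eperv(\ind\C_X)$ and that every object of $\Eperv(\ind\C_X)$ is the enhanced de Rham complex of a holonomic module concentrated in degree $0$.

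Let $\shm$ be a holonomic $\D_X$-module with singular locus $\Sigma$, and set $K=\dr^\enh(\shm)$. Over $X\setminus\Sigma$ the module $\shm$ is a meromorphic connection, so $K|_{X\setminus\Sigma}\simeq e(L[1])$ for a local system $L$ of finite rank, which is condition (c) of Definition~\ref{def:ECc}(ii). For $a\in\Sigma$, the $\D$-module pullbacks of $\shm$ along the inclusion $i_a$ have cohomology concentrated in two adjacent degrees, so compatibility of $\dr^\enh$ with $\Eopb{i_a}$ and $\Eepb{i_a}$ yields the support and cosupport estimates (a) and (b). Finally, condition (d) --- that $H^{-1}(K)$ has normal form at each $a\in\Sigma$ --- is precisely the content of \cite[Proposition 4.1.2]{DK18} (see also \cite[Proposition 3.28]{Moc16}): after a ramification, the Hukuhara--Levelt--Turrittin decomposition $\shm^{\operatorname{formal}}_a\simeq\DSum_f\bl\shl_f\dtens\she^{-f}\br^{\operatorname{formal}}_a$ with $\shl_f$ regular holonomic translates, via $\dr^\enh$, into the normal form \eqref{eq:Knormal}, the exponential factors being the classes $[f]$ and the multiplicities being determined by the $\shl_f$. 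Hence $\dr^\enh$ maps $\Mod_\hol(\D_X)$ into $\Eperv(\ind\C_X)$.

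Conversely, let $K\in\Eperv(\ind\C_X)$. Since $K$ lies in $\BECCc[\ind\C]{X}$, the essential-image statement of \cite{DK16} provides $\shn\in\derb_\hol(\D_X)$ with $\dr^\enh(\shn)\simeq K$; indeed, by the same local structure theorem every object of $\BECCc[\ind\C]{X}$ is, locally and up to objects of sheaf type, a finite direct sum of elementary pieces $\Ex^{\Re f}_{V|X}[1]\simeq\dr^\enh(\she^{-f})$ together with a regular part, so $\shn$ is built locally from the $\she^{-f}$ and a regular holonomic module. Condition (c) then forces $\shn|_{X\setminus\Sigma}$ to be a connection placed in degree $0$, whence $H^j(\shn)$ for $j\neq 0$ is supported on the discrete set $\Sigma$; conditions (a) and (b), read through $\Eopb{i_a}$ and $\Eepb{i_a}$ and the compatibility of $\dr^\enh$ with these functors, force these point-supported summands to vanish. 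Thus $\shn$ is concentrated in degree $0$, i.e.\ $\shn\simeq\shm$ for a holonomic $\D_X$-module $\shm$. Together with full faithfulness, this shows that $\dr^\enh$ induces an equivalence $\Mod_\hol(\D_X)\isoto\Eperv(\ind\C_X)$.

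I expect the essential difficulty to lie in condition (d), namely the local normal form of $\dr^\enh(\shm)$ near a turning point: this is exactly where the asymptotic analysis behind \cite[Proposition 4.1.2]{DK18} and Mochizuki's \cite[Proposition 3.28]{Moc16} is needed, and it ultimately reduces to the elementary computation for the exponential pieces recorded in Lemma~\ref{lem:Exy}. The remaining ingredients --- full faithfulness and the operational compatibilities from \cite{DK16}, the cohomological amplitude of $\D$-module pullbacks along a point, and the classical Riemann--Hilbert correspondence away from $\Sigma$ --- are routine.
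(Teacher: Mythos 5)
Your overall architecture is the one the paper intends: the paper offers no argument of its own beyond invoking \cite[Proposition 4.1.2]{DK18} and \cite[Proposition 3.28]{Moc16}, and your forward direction (full faithfulness of $\dr^\enh$ from \cite{DK16}, the cohomological amplitude of $\derl i_a^*\shm$ giving conditions (a)--(b), the classical correspondence off $\Sigma$ giving (c), and the cited normal-form results giving (d)) is exactly the intended reduction, as is your degree-zero concentration argument for the converse once a preimage $\shn\in\derb_\hol(\D_X)$ is granted.

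The genuine gap is in how you produce that preimage. There is no essential-image characterization in \cite{DK16}: the statement that every object of $\BECCc[\ind\C]X$ (equivalently, every $K$ with sectorial normal forms) lies in the image of $\dr^\enh$ on $\derb_\hol(\D_X)$ is precisely the hard content of the results the paper cites (Mochizuki's curve test; see also \cite{DK18}), not a formal consequence of the local structure theorem. Your fallback sketch --- ``every object of $\BECCc[\ind\C]X$ is locally a finite sum of pieces $\Ex^{\Re f}_{V|X}[1]$ plus a regular part, so $\shn$ is built locally'' --- does not work as written, for two reasons. First, the normal form \eqref{eq:Knormal} is only a decomposition on small sectors $V_\theta\dotowns\theta$ of the real oriented blow-up; it does \emph{not} give a direct-sum decomposition of $K$ on a punctured neighborhood of $a$ in $X$. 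What the sectorial data encodes is a Stokes-filtered local system, and reconstructing from it a meromorphic connection (equivalently, a holonomic $\shn$ with $\dr^\enh(\shn)\simeq K$, compatibly glued using full faithfulness) is the Deligne--Malgrange/Mochizuki step; it cannot be compressed into ``built locally from the $\she^{-f}$ and a regular holonomic module.'' Second, the exponential factors $f$ are in general ramified Puiseux germs, so the elementary pieces are not of the form $\dr^\enh(\she^{-f})$ for a $\D$-module on $X$ without first passing through a ramified covering and pushing forward. If you instead invoke the essential-image statement as a black box with the correct attribution (to \cite{Moc16}, not \cite{DK16}), your proof closes; as written, the key surjectivity step is both misattributed and under-justified.
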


\begin{proposition}\label{pro:numuperv}
Let $K\in\Eperv(\ifield_X)$ and $a\in X$ a singularity of $K$. Then both $\Enu_{\st a}(K)$ and $\Emu_{\st a}(K)[1]$ are of sheaf type. Moreover they, as well as their associated sheaves, are perverse with the zero-section as their only singularity.
\end{proposition}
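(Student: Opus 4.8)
The plan is to reduce everything to a local, normal-form computation on the circle $S_a X$, then to analyze the two basic building blocks: the sheaf-type part coming from $[f]=[0]$, and the exponential part coming from $[f]\ne[0]$.

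First I would observe that the statement is local near $a$, so by Definition~\ref{def:ECc}(ii)(d) we may replace $K$ by something with normal form at $a$; moreover the question of being of sheaf type, and of perversity of the associated sheaf, can be checked after restricting to small sectors. Thus I would stratify $S_a X$ into finitely many open arcs on which $\opb\pi\field_V\tens K$ decomposes as in \eqref{eq:Knormal}, and handle each summand $\bl\Ex^{\Re f}_{V|X}\br^{n}[1]$ separately. Since $\Enu_{\st a}$ commutes with $\Eopb{j_a}$, $\Eoim{j_\nd}$, $\Eopb{p_\Omega}$, $\Eopb{i_\nd}$ — and in particular with the direct-sum decomposition — and similarly for $\Emu_{\st a}\simeq\lap{(\,\cdot\,)}$, it suffices to compute $\Enu^\sph_{\st a}$ and $\Emu^\sph_{\st a}$ on a single exponential block. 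For $[f]=[0]$ (equivalently $\ord_a(f)\le0$), Proposition~\ref{pro:nuSa} already gives that $\Enu^\rb_{\st a}\simeq\Enu^\sph_{\st a}$ restricted to the relevant arc is $e(L)$ with $L$ a local system; and one checks that the Fourier--Sato transform of a sheaf-type conic object along $S_NM$ is again of sheaf type, so $\Emu^\sph_{\st a}$ of that block is of sheaf type too. For $[f]\ne[0]$ (i.e. $\ord_a(f)>0$), after a ramification and change of coordinate we reduce to $f=z_a^{-1}$, i.e. to the model of Lemma~\ref{lem:Exy}: by \eqref{eq:UM2} the enhanced specialization of this block vanishes, and hence so does its enhanced microlocalization. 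Combining the blocks: $\Enu_{\st a}(K)$ and $\Emu_{\st a}(K)$ are of sheaf type, and $\Enu^\sph_{\st a}(K)$ is (up to shift) a local system of rank $\ol N([0])$ on $S_a X$; since the construction is $\inb{(\Gmp)}$-conic, the non-spherical versions $\Enu_{\st a}(K)$, $\Emu_{\st a}(K)$ are pulled back from $S_a X$ along $\gamma$ and are therefore constant along the fibers, which forces the perversity statement with the zero-section as the only possible singularity.

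For perversity of the associated sheaf, I would argue as follows. The sheaf $\sh\bl\Enu_{\st a}(K)\br$ is, away from the zero-section, just the local system $\opb\gamma(L\otimes\text{something})[1]$ pulled back from $S_NM\simeq S_aX$, hence perverse there; so it remains to verify conditions (a),(b) of the perversity definition at the zero-section, which is a single point of $N$ (here $\dim N=\dim X=1$ over $\C$, but the specialization lives on the $\R$-vector bundle $T_NM$ whose base is a point in the relevant one-variable picture; in any case the vanishing is a stalk/costalk computation at the zero-section). These one-point conditions I would extract either from the explicit model computation — Lemma~\ref{lem:Exy}, whose \eqref{eq:UM3} computes exactly the relevant associated sheaf $\cor_{\{\rho>0\}\cup\{s<0\}}$, from which the stalk at the origin and the costalk are read off — or by invoking that $\Enu_{\st a}$, $\Emu_{\st a}$ are exact for the middle perversity $t$-structure on $\C$-constructible enhanced ind-sheaves (this is where \cite{DK19} enters), so that perverse $K$ goes to perverse output, and then apply the already-proved fact that $\sh$ sends $\Eperv$ to $\Perv$. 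The $[1]$ shift on $\Emu_{\st a}$ is precisely the normalization that matches the convention $\van(F)\simeq\mu^\sph_{\st a}(F)[1]$ from \eqref{eq:intronupsi}.

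The main obstacle, I expect, is the bookkeeping at the zero-section: making sure that the conic structure genuinely forces $\Enu_{\st a}(K)$ and $\Emu_{\st a}(K)$ to be globally of sheaf type (not merely arc-by-arc), and that the gluing of the arc-wise local systems $L$ is consistent — this is where one needs that the canonical/variation maps behave correctly, or equivalently that the monodromy contributions from the different exponential sectors assemble into a genuine local system on all of $S_a X$. A secondary subtlety is the Fourier--Sato transform step: one must know that $\lap{(\,\cdot\,)}$ preserves the property of being of sheaf type for conic objects over a one-dimensional base, which should follow from the explicit kernel $\Lap=\ex^{-\langle v,w\rangle}_{V\times V^*|\cdots}$ together with Lemma~\ref{lem:Exy}-type computations, but needs to be stated carefully. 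Once these two points are settled, the perversity of the associated sheaves is a routine stalk/costalk check using \eqref{eq:UM3}.
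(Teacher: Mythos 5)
There is a genuine gap, and it sits exactly where you flagged it: the zero-section. Your normal-form decomposition \eqref{eq:Knormal} only holds after tensoring with $\opb\pi\field_{V_\theta}$ for sectorial neighborhoods $V_\theta\dotowns\theta$, i.e.\ it describes $K$ away from $a$; consequently the arc-by-arc analysis (which is essentially Proposition~\ref{pro:nuSa} and correctly identifies $\Enu^\sph_{\st a}(K)$) gives no information on the stalk or costalk of $\Enu_{\st a}(K)$ at the origin of $T_aX$. Your claim that conicity forces $\Enu_{\st a}(K)$ to be ``pulled back from $S_aX$ along $\gamma$'' is false on the zero-section: a conic object is not determined there by its restriction off the zero-section (already $\field_{\st 0}$ is conic and invisible off the origin). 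A symptom of the gap is that your argument never uses the enhanced perversity conditions $H^n\Eopb{i_a}K=0$ for $n>0$ and $H^n\Eepb{i_a}K=0$ for $n<0$, yet the perversity of $\Enu_{\st a}(K)$ at the zero-section cannot hold without them. The paper's proof supplies precisely the missing ingredients: the distinguished triangle
\[
\Eeeim u\Eopb{u}\Enu_{\st{a}} K \to \Enu_{\st{a}} K \to \Eoim o \Eopb{o}\Enu_{\st{a}} K \To[+1],
\]
together with the identifications $\Eopb{o}\Enu_{\st a}(K)\simeq\Eopb{i_a}K$ and $\Eepb{o}\Enu_{\st a}(K)\simeq\Eepb{i_a}K$ from \cite[Lemma~4.8]{DK19}. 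Sheaf type then follows because $\Eeeim u\Eopb u\Enu_{\st a}K\simeq e(\reim u\opb\gamma L[1])$ (semi-properness of $u$, via \eqref{eq:semipr}), any $\R$-constructible object on a point is of sheaf type, and sheaf-type objects are closed under extension; perversity at the origin follows from conditions (a),(b) of Definition~\ref{def:ECc}(ii) transported through \cite[Lemma~4.8]{DK19}. Neither of your proposed repairs closes this: the model computation \eqref{eq:UM3} of Lemma~\ref{lem:Exy} only concerns a single exponential block on a sector and still does not see $\Eopb{i_a}K$, and a $t$-exactness statement for $\Enu_{\st a}$, $\Emu_{\st a}$ is not available in \cite{DK19} --- it is essentially what this proposition is proving, so invoking it begs the question.

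A secondary point: your worry about whether the enhanced Fourier--Sato transform preserves sheaf type dissolves once the specialization is handled globally. The paper first establishes $\Enu_{\st a}(K)\simeq e(F)$ on all of $\inb{(T_aX)}$ and then simply writes $\Emu_{\st a}(K)\simeq\Enu_{\st a}(K)^\wedge\simeq\bl e(F)\br^\wedge\simeq e(F^\wedge)$, reducing the perversity of the microlocalization (up to the shift $[1]$) to the classical fact that the Fourier--Sato transform preserves perversity of $\Gmp$-conic sheaves; no blockwise or kernel-level analysis of $\Lap$ is needed. Your blockwise assertion ``the specialization of this block vanishes, hence so does its microlocalization'' is also not meaningful as stated, since $\Emu_{\st a}$ is a global transform on $T_aX$ rather than something computed sector by sector.
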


\begin{proof}
Consider the morphisms
\[
\xymatrix{
X & \ar@{_(->}[l]_{i_a} \st a \ar@{^(->}[r]^-o & T_a X & \ar@{_(->}[l]_-u \inb{(\bdot T_a X)} \ar[r]^\gamma & S_a X,
}
\]
and consider the distinguished triangle
\begin{equation}
\label{eq:conEnu1}
\Eeeim u\Eopb{u}\Enu_{\st{a}} K \to \Enu_{\st{a}} K \to \Eoim o \Eopb{o}\Enu_{\st{a}} K
 \To[+1].
\end{equation}

\smallskip\noindent(i)
Let us show that $\Enu_{\st a} K$ is of sheaf type.
 By  Proposition~\ref{pro:nuSa}, there exists a local system 
$L\in\Mod(\field_{S_a X})$ such that $\Enu^\sph_{\st a}(K)\simeq e\iota(L[1])$.
Since $u$ is semiproper, one has by \eqref{eq:semipr}
\[
\Eeeim u\Eopb{u}\Enu_{\st a} K \simeq\Eeeim u\Eopb \gamma\Enu^\sph_{\st a} K \simeq e\iota(\reim u\opb \gamma L[1]).
\]
Hence $\Eeeim u\Eopb{u}\Enu_{\st a} K $ is of sheaf type.
Since any $\R$-constructible enhanced ind-sheaf on a point is of sheaf type,
$\Eopb{o}\Enu_{\st{a}} K$ is of sheaf type.
This implies that $\Enu_{\st a} K$ is of sheaf type by
the distinguished triangle \eqref{eq:conEnu1}.

\smallskip\noindent(ii)
Let us show that $\Enu_{\st a}(K)\in\BECRc{T_a X}$ is perverse, with $\st a$ as its only singularity.
Since
$\Eopb{o}\Enu_{\st a}(K)\simeq\Eopb{i_a}K$
and $\Eepb{o}\Enu_{\st a}(K)\simeq\Eepb{i_a}K$ by \cite[Lemma~4.8]{DK21a},
we have
\begin{itemize}
\item[(a)] $H^n(\Eopb{o}\Enu_{\st a}(K))\simeq
H^n(\Eopb{i_a}K)\simeq 0$ for $n>0$,
\item[(b)] 
$H^n(\Eepb{o}\Enu_{\st a}(K))\simeq
H^n(\Eepb{i_a}K)\simeq 0$ for $n<0$.
\item[(c)] $\Eopb u\Enu_{\st a}(K)\simeq e\iota(\opb\gamma L[1])$, with $L$ as in (i).
\end{itemize}

\smallskip\noindent(iii)
It remains to show that $\Emu_{\st a}(K)[1]$ is of sheaf type, and that its associated sheaf is perverse. Setting $F\defeq\sh\bl\Enu_{\st a}(K)\br$, this follows from
\[
\Emu_{\st a}(K) \simeq \Enu_{\st a}(K)^\wedge \simeq e\iota(F)^\wedge \simeq e\iota(F^\wedge),
\]
and the fact that the classical Fourier-Sato transform for sheaves preserves the perversity of $\Gmp$-conic objects, up to shift $[1]$.
\end{proof}

\section{Nearby and vanishing cycles}\label{se:psiphi}

As we mentioned in the Introduction, nearby cycles for enhanced ind-sheaves were already discussed in \cite[\S6.2]{DK18}. However, defining them through enhanced specialization, as we do here, sheds some light on the underlying geometry. Moreover, using enhanced microlocalization, we can here also deal with vanishing cycles. In this section we thus recall and complement some results from loc.\ cit.

\subsection{Definitions}\label{sse:psiphi}
Let $X$ be a smooth complex curve, and $a\in X$.
Consider the natural morphisms associated with the real blow-up $\RB[a]X$ of $X$ with center $a$ as in \eqref{eq:blowonline}:
\[
\xymatrix@R=3ex{
S_a X \ar@{^(->}[r]^-i & \RB[a] X
& \inb{(X\setminus\{a\})} \ar@{^(->}[r]^-{j_a} \ar@{_(->}[l]_-j & X\,.
}
\]

\begin{definition}\label{def:psiphi}
Let $K\in\BECRc X$.
\begin{itemize}
\item[(i)]
Consider the objects of $\BDC(\field_{S_a X})$
\begin{align*}
\near(K) 
&\defeq \nu_{\st a}^\sph\bl\sh_X(K)\br \simeq \nu_{\st a}^\rb\bl\sh_X(K)\br \\
&= \opb i \roim j \opb j_a\sh_X (K) \\
&\underset{(*)}\simeq \opb i \roim j \sh_{\inb{(X\setminus\{a\})}} \bl\Eopb j_a K\br, \\
\fnear 0 (K) 
&\defeq \opb i \sh_{\RB[a] X} \bl\Eoim j \Eopb j_a K\br, \\
\gnear 0 (K) 
&\defeq \sh_{S_a X} \bl\Eopb i \Eoim j \Eopb j_a K\br \\
&= \sh_{S_a X} \bl \Enu_{\st a}^\rb(K)\br \simeq \sh_{S_a X} \bl \Enu_{\st a}^\sph(K)\br,
\end{align*}
where $(*)$ follows from \cite[Lemma~3.9]{DK21b}.
\item[(ii)]
Let $I\subset S_a X$ be an open subset and $f\in\shp_{S_a X}(I)$.
For $U\subset X\setminus\st a$ an open subset such that $U\dotsupset I$
and $f$ extends on $U$, set
\begin{align*}
K(f) &\defeq \cihom(\Ex^{\Re f}_{U|X},K)\in\BECp X , \\
\fnear f (K) 
&\defeq \fnear 0\bl K(f)\br|_I\in\BDC(\field_{I}), \\
\gnear f (K) 
&\defeq \gnear 0\bl K(f)\br|_I\in\BDC(\field_{I}).
\end{align*}
Note that $\fnear f(K)$ and $\gnear f(K)$ do not depend on the choice of $U$.
\item[(iii)]
Consider the object of $\BDC(\field_{S_a^*X})$
\begin{align*}
\van(K) &\defeq \mu_{\st a}^\sph\bl\sh_X(K)\br[1] , \\
\van^0(K) &\defeq \sh_{S_a^*X}\bl\Emu_{\st a}^\sph(K)\br[1] .
\end{align*}
\end{itemize}
\end{definition}

\begin{lemma}
Let $I\subset S_a X$ be an open subset, $f,g\in\shp_{S_a X}(I)$ with $f\eprec_I g$, and $K\in\BECp X$. 
Then there are natural morphisms in $\BECp I$
\[
\near(K)|_I \from \fnear g(K) \from \fnear f(K) \to \gnear f(K). 
\]
\end{lemma}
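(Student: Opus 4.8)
The plan is to build the three morphisms one at a time, each as a composite of elementary functoriality arrows attached to the blow-up diagram \eqref{eq:blowonline}; the only ingredients that are not pure formalism are the order relation $f\eprec_I g$ and the triviality of the exponential enhanced ind-sheaves $\Ex^{\Re h}_{U|X}$ over the open part of $\RB[a]X$. For the arrow $\fnear f(K)\to\gnear f(K)$: writing $M\defeq\Eoim j\,\Eopb j_a K(f)\in\BECp{\RB[a]X}$, Definition~\ref{def:psiphi} gives $\fnear f(K)=\bl\opb i\,\sh_{\RB[a]X}(M)\br|_I$ and $\gnear f(K)=\bl\sh_{S_a X}(\Eopb i M)\br|_I$, so it suffices to produce a natural transformation $\opb i\circ\sh_{\RB[a]X}\Rightarrow\sh_{S_a X}\circ\Eopb i$ and evaluate it at $M$. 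Since $\sh_\bbM=\fhom(\Efield_\bbM,-)$ and $\Eopb i\,\Efield_{\RB[a]X}\simeq\Efield_{S_a X}$, I would take the tautological base-change morphism for the functor $\fhom$. As $i$ is a closed embedding this is not an isomorphism in general; it is the ``pass to the associated graded'' map.

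For the arrow $\fnear f(K)\to\fnear g(K)$ I would first translate $f\eprec_I g$ into geometry: after shrinking $U\dotsupset I$ so that $\Re(f-g)$ is bounded above on $U$, say $\Re f\leq\Re g+C$, the inclusion of closed subsets $\{t+\Re f\geq0\}\subset\{t+\Re g+C\geq0\}$, together with the fact that $\Efield_X\ctens(-)$ absorbs a constant shift in the variable $t$, yields a canonical morphism $\Ex^{\Re g}_{U|X}\to\Ex^{\Re f}_{U|X}$ in $\BECp X$. Applying $\cihom(-,K)$ turns this into $K(f)\to K(g)$, whence $\fnear f(K)=\fnear 0(K(f))|_I\to\fnear 0(K(g))|_I=\fnear g(K)$ by functoriality of $\fnear 0(-)|_I$; the independence of the choice of $U$ is part of Definition~\ref{def:psiphi}~(ii).

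The arrow $\fnear g(K)\to\near(K)|_I$ is the step I expect to be the main obstacle. Set $N\defeq\Eopb j_a K(g)$. Since $j$ is an open embedding, $\sh$ commutes with $\opb{(\cdot)}$ along $j$ (this is \cite[Lemma~3.9]{DK20}, used already as the isomorphism $(*)$ in Definition~\ref{def:psiphi}), so $\opb j\,\sh_{\RB[a]X}(\Eoim j N)\simeq\sh_{\inb{(X\setminus\{a\})}}(\Eopb j\Eoim j N)\simeq\sh_{\inb{(X\setminus\{a\})}}(N)$; composing with the adjunction unit $\id\to\roim j\,\opb j$ and applying $\opb i$ produces a morphism $\fnear g(K)\to\near(K(g))|_I$. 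It remains to identify $\near(K(g))|_I\simeq\near(K)|_I$: over any open $U'\subset U$ the function $g$ is holomorphic, hence $\Ex^{\Re g}_{U|X}|_{\inb{(U')}}\simeq\Efield_{\inb{(U')}}$, and therefore (using that $\opb{(\cdot)}$ commutes with $\cihom$ along open embeddings, and $\sh\circ\cihom(\Efield_X,-)\simeq\sh$) one gets $\sh(\Eopb j_a K(g))|_{U'}\simeq\sh(\Eopb j_a K)|_{U'}$; since $\opb i\,\roim j(-)|_I$ depends only on the restriction to sectorial neighborhoods of $I$ contained in $U$, this gives the desired canonical isomorphism. Composing the two steps yields $\fnear g(K)\to\near(K)|_I$.

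Apart from that last base-change-and-triviality argument everything is bookkeeping; the points to watch are the direction of the exponential comparison $\Ex^{\Re g}_{U|X}\to\Ex^{\Re f}_{U|X}$ (it must be the one compatible with the convention for $\eprec_I$), the naturality of $\opb i\circ\sh\Rightarrow\sh\circ\Eopb i$, and the independence of all three arrows from the auxiliary open set $U$. No input beyond the functoriality of $\sh=\fhom(\Efield_\bbM,-)$ and \cite[Lemma~3.9]{DK20} should be required.
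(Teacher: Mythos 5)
Your overall strategy is the one the paper intends: the paper's proof is a one-line reduction to the canonical comparison morphisms between the sheafification functor $\sh$ and the operations (this is what \cite[Lemma~3.9]{DK20} supplies), and your first two arrows are exactly of this kind. The comparison $\opb i\,\sh_{\RB[a]X}(M)\to\sh_{S_aX}(\Eopb i M)$ applied to $M=\Eoim j\Eopb{j_a}K(f)$ gives $\fnear f(K)\to\gnear f(K)$, and the morphism $\Ex^{\Re g}_{U|X}\to\Ex^{\Re f}_{U|X}$ induced by $f\eprec_I g$ (your direction check is correct) gives $K(f)\to K(g)$ and hence $\fnear f(K)\to\fnear g(K)$. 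The construction of the third arrow via the unit $\id\to\roim j\opb j$ and $\opb j\,\sh_{\RB[a]X}(\Eoim j N)\simeq\sh(N)$ is also fine.

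The gap is in your identification $\near(K(g))|_I\simeq\near(K)|_I$, specifically the claim that $\Ex^{\Re g}_{U|X}|_{\inb{(U')}}\simeq\Efield_{\inb{(U')}}$ because $g$ is holomorphic on $U'$. If $U'$ is a sectorial neighborhood of $I$ (which is what your phrase ``depends only on the restriction to sectorial neighborhoods'' requires), this is false whenever $g$ is unbounded at $a$, e.g.\ $g=z_a^{-1}$: on the bordered space $\inb{(U')}=(U',\overline{U'})$ an isomorphism $\Ex^{\Re g}\simeq\Efield$ needs $\Re g$ bounded on $U'$, and holomorphy gives no bound near $a$; and the unbounded case is precisely the nontrivial one. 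The statement you actually need is weaker and true: only the ordinary sheaf $\sh_{\inb{(X\setminus\{a\})}}\bl\Eopb{j_a}K(g)\br$ enters $\near(K(g))$, and isomorphisms of ordinary sheaves can be constructed and glued locally on $X\setminus\{a\}$. On any relatively compact open $B\subset U$ the function $\Re g$ is bounded, so $\Ex^{\Re g}_{U|X}|_{\inb B}\simeq\Efield_{\inb B}$ canonically (independently of the chosen bound, since translations in $t$ act trivially after $\ctens\,\Efield$), whence $\sh\bl K(g)\br|_B\simeq\sh\bl\cihom(\Efield_X,K)\br|_B\simeq\sh(K)|_B$, using $\fhom(\Efield_X,\cihom(\Efield_X,K))\simeq\fhom(\Efield_X\ctens\Efield_X,K)$; by canonicity these glue to $\sh_X\bl K(g)\br|_U\simeq\sh_X(K)|_U$, and since the stalks of $\opb i\roim j(\cdot)|_I$ are colimits over $V\dotowns\theta$ with $V\subset U$, this yields $\near(K(g))|_I\simeq\near(K)|_I$ and repairs your third arrow. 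So the route is the intended one, but the trivialization must be asserted locally on $X\setminus\{a\}$ after sheafification, not on the bordered sectorial neighborhood, where it fails.
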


\begin{proof}
It follows from \cite[Lemma~3.9]{DK21b}.
\end{proof}

Let $\theta\in S_a X$, $f\in\shp_{S_a X,\theta}$,
and denote by $z_a$ a local coordinate at $a$ with $z_a(a)=0$.
Assuming  $K\in\BECRc X$, it follows from \cite[Lemma~5.1]{DK21b} that one has
\begin{equation}\label{eq:germ}
\gnear f(K)_\theta \simeq
\ilim[\delta,\varepsilon\to0+,\ V\dotowns \theta]\FHom(\exx^{\range{\Re f(x)}{\Re f(x) -\delta|z_a(x)|^{-\varepsilon}}}_{V|X}, K).
\end{equation}

\subsection{The case of perverse objects}

Let us collect in the following lemma some results from \cite[\S6]{DK18}. Note that statement (iii) below also follows from Proposition~\ref{pro:nuSa}.

\begin{lemma}\label{lem:psiphinorm}
 Let $I\subset S_aX$ be a connected open subset. 
Let $f,g\in\shp_{S_a X}(I)$ with $f\eprec_I g$, and $K\in\BECRc X$. 
Assume that $K$ has normal form at $I$. Then
\begin{itemize}
\item[(i)]
$\near(K)|_I$ is concentrated in degree zero and is a local system on $I$ of rank $\sum_{[h]\in\oshp_{S_a X,\theta}}\overline N(h)$  for  $\theta\in I$,
\item[(ii)] 
$\fnear f(K)$ is concentrated in degree zero, and is an $\R$-constructible sheaf on $I$. Moreover, the morphisms $\fnear f(K) \to \fnear g(K) \to \near(K)|_I$ are monomorphisms, and $\fnear f(K) \to \gnear f(K)$ is an epimorphism.
\item[(iii)] 
$\gnear f (K)$ is concentrated in degree zero, and is a local system of rank $\overline N([f])$ on $I$.
\end{itemize}
\end{lemma}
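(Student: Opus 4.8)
The plan is to reduce all three statements --- which are local on $I$ --- to the normal form \eqref{eq:Knormal} and then to a single exponential summand. First I would fix $\theta\in I$ together with a sectorial neighborhood $V\dotsupset I$ on which \eqref{eq:Knormal} holds, and observe that each of $\near$, $\fnear f$, $\gnear f$ is a composite of triangulated functors (the sheafification $\sh$, the external operations $\opb i$, $\roim j$, $\opb{j_a}$, $\Eopb i$, $\Eoim j$, $\Eopb{j_a}$, and, for the twisted versions, $\cihom(\Ex^{\Re f}_{U|X},\dummy)$), hence is additive. It is therefore enough to evaluate $\near$, $\fnear 0$, $\gnear 0$ on a single object $\Ex^{\Re h}_{V|X}[1]$ with $h\in\shp_{S_a X}(I)$. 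Moreover, since $\cihom\bl\Ex^{\Re f}_{U|X},\Ex^{\Re h}_{V|X}\br\simeq\Ex^{\Re(h-f)}_{V'|X}$ on a suitable $V'\dotsupset I$, passing from $\fnear 0,\gnear 0$ to $\fnear f,\gnear f$ simply replaces $h$ by $h-f$, so I may take $f=0$ throughout.

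Next I would split into the cases $h$ bounded and $h$ unbounded, i.e.\ $[h]=[0]$ or $[h]\ne[0]$. If $h$ is bounded, then $\Ex^{\Re h}_{V|X}\simeq e(\cor_V)$ is of sheaf type, and a direct computation of $\opb i\roim j\opb{j_a}$ and of $\sh\circ\Enu^\rb_{\st a}$ on (a shift of) $\cor_V$ identifies $\near$, $\fnear 0$ and $\gnear 0$ of $\Ex^{\Re h}_{V|X}[1]$, restricted to $I$, all with one and the same rank-one local system, the three structure maps being isomorphisms. If $h$ is unbounded, I would perform a ramification and a change of local coordinate --- exactly as in the proof of Proposition~\ref{pro:nuSa} --- to reduce to the model $h=z_a^{-1}$ of Lemma~\ref{lem:Exy}: then \eqref{eq:UM2} gives $\gnear 0\bl\Ex^{\Re h}_{V|X}[1]\br\simeq\sh\bl\Enu^\rb_{\st a}(\Ex^{\Re h}_{V|X}[1])\br\simeq0$, while \eqref{eq:UM1} together with \eqref{eq:UM3} identifies $\fnear 0$ and $\near|_I$ of $\Ex^{\Re h}_{V|X}[1]$ with constant sheaves supported, respectively, on the open arc $\set{\theta\in I}{h\eprec_\theta 0}$ and on all of $I$.

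Assembling the summands of \eqref{eq:Knormal} would then give (i), with the rank at $\theta$ equal to $\sum_{[h]}\overline N(h)$, and (iii), with rank $\overline N([f])$, since the only summands surviving in $\gnear f(K)$ are those with $h-f$ bounded, i.e.\ $[h]=[f]$; this last point also follows at once from Proposition~\ref{pro:nuSa} applied to $K(f)$, whose exponential factors are the $[h-f]$ for $h\in\Phi_\theta$. For (ii), the same bookkeeping shows that $\fnear f(K)$ is $\R$-constructible and concentrated in degree zero, that the morphisms $\fnear f(K)\to\fnear g(K)\to\near(K)|_I$ are monomorphisms --- because $f\eprec_I g$ and $h\eprec_I f$ force $h\eprec_I g$, so the support of each summand of $\fnear\bullet(K)$ can only grow --- and that $\fnear f(K)\to\gnear f(K)$ is an epimorphism, being an isomorphism on the summands with $[h]=[f]$ while $\gnear f(K)$ receives no contribution from the remaining ones.

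The step I expect to be the real obstacle is not the identification of the objects --- which is a matter of combining Lemma~\ref{lem:Exy} with the stalk formula \eqref{eq:germ} --- but the identification of the \emph{morphisms}: one must check that the canonical transformations relating $\fnear f$, $\fnear g$, $\gnear f$ and $\near|_I$ restrict, summand by summand in \eqref{eq:Knormal}, to the expected inclusions and projections, which forces one to unwind the adjunction units and counits built into the definitions rather than merely comparing the resulting sheaves. These verifications are the content of \cite[\S6]{DK18}, which is the source of (i) and (ii), and I would follow that treatment.
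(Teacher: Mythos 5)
Your proposal is correct and takes essentially the same route as the paper: the paper gives no independent proof of this lemma, presenting it as a collection of results from \cite[\S6]{DK18} and noting only that (iii) also follows from Proposition~\ref{pro:nuSa}, which is exactly where you defer the delicate morphism-level verifications. Your preliminary reduction to a single summand of the normal form \eqref{eq:Knormal}, the model computations via Lemma~\ref{lem:Exy}, and the observation that (iii) follows from Proposition~\ref{pro:nuSa} applied to $K(f)$ are all consistent with the paper's own arguments and the cited source.
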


Recall the notion of a Stokes filtration, e.g.\ from \cite[\S6.1]{DK18}.

\begin{proposition}
Let $K\in\Eperv(\ifield_X)$. Then
\begin{itemize}
\item[(i)]
$\near(K)$ is a local system on $S_a X$ with Stokes filtration $\fnear \bullet(K)$, and associated graded components $\gnear \bullet(K)$ which is a local system on $S_aX$;
\item[(ii)] 
$\van^0(K)$  is a local system on $S^*_a X$.
\end{itemize}
\end{proposition}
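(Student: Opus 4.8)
The plan is to deduce the first assertion from Lemma~\ref{lem:psiphinorm} together with the Stokes theory of \cite[\S6]{DK18}, after checking that an enhanced perverse object automatically has normal form at its singularities, and to deduce the second from Proposition~\ref{pro:numuperv} by a sphere-trace computation. For the first assertion, the initial step is to see that $K$ has normal form at every $a$ in the discrete set $\Sigma$. By Definition~\ref{def:ECc}~(ii) one has $H^n(K)=0$ for $n\notin\{-1,0\}$, the ind-sheaf $H^0(K)$ is supported on $\Sigma$ by condition~(c), and $H^{-1}(K)$ has normal form at $a$ by condition~(d). Since any sector $V_\theta\dotowns\theta$ with $\theta\in S_aX$ lies in $X\setminus\st a$, tensoring the triangle $H^{-1}(K)[1]\to K\to H^0(K)\to[+1]$ with $\opb\pi\field_{V_\theta}$ kills the last term, so $\opb\pi\field_{V_\theta}\tens K\simeq\opb\pi\field_{V_\theta}\tens H^{-1}(K)[1]$ is in normal form. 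Lemma~\ref{lem:psiphinorm} applied with $I=S_aX$ then yields that $\near(K)$ is a local system on $S_aX$, that each $\fnear f(K)$ is an $\R$-constructible subsheaf of $\near(K)$ over the relevant sector, with $\fnear f(K)\to\fnear g(K)$ a monomorphism whenever $f\eprec g$ and $\fnear f(K)\to\gnear f(K)$ an epimorphism, and that $\gnear f(K)$ is a local system of rank $\ol N([f])$. It remains to see that $\bl\near(K),\fnear\bullet(K)\br$ is a Stokes-filtered local system in the sense of \cite[\S6.1]{DK18}, with associated graded $\gnear\bullet(K)$: this is local on $S_aX$, and by the explicit normal form \eqref{eq:Knormal} it reduces to the elementary pieces $\Ex^{\Re f}_{V_\theta|X}[1]$, for which \eqref{eq:germ} and Lemma~\ref{lem:psiphinorm} give that $\fnear g$ is the whole local system if $f\eprec g$ and strictly smaller otherwise, while $\gnear g$ is a rank-one local system if $[f]=[g]$ and vanishes otherwise; summing over $f\in\Phi_\theta$ produces the stalkwise decomposition $\near(K)_\theta\simeq\bigoplus_{[f]}\bl\gnear f(K)\br_\theta$, which is precisely the Stokes condition. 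This is in substance \cite[\S6]{DK18}, whence Lemma~\ref{lem:psiphinorm}.

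For the second assertion, by Proposition~\ref{pro:numuperv} the object $\Emu_{\st a}(K)[1]$ is of sheaf type, so $\Emu_{\st a}(K)\simeq e\bl P[-1]\br$ where $P\defeq\sh\bl\Emu_{\st a}(K)\br[1]$ is a perverse sheaf on the cotangent line $T^*_aX\simeq\C$, conic for the $\Gmp$-action, whose only singularity is the origin. With the natural morphisms $\inb{(T^*_aX)}\from[u]\inb{(\bdot T^*_aX)}\to[\gamma]S^*_aX$ one has $\van^0(K)=\sh\bl\Eoim\gamma\,\Eopb u\,\Emu_{\st a}(K)\br[1]$. Since $e$ commutes with $\Eopb u$, and $\sh$ commutes with $\Eoim\gamma$ and is a left inverse of $e$ — these commutations being of the kind used for the isomorphism marked $(*)$ in Definition~\ref{def:psiphi}, cf.\ \cite[Lemma~3.9]{DK20} and \cite{DK19} — this simplifies to $\van^0(K)\simeq\roim\gamma\bl P|_{\bdot T^*_aX}\br$. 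Now $P|_{\bdot T^*_aX}$ is a perverse sheaf on the smooth complex curve $\bdot T^*_aX=T^*_aX\setminus\st 0$ with no singularity there, hence of the form $\mathcal{M}[1]$ for a local system $\mathcal{M}$, which is $\Gmp$-conic because $P$ is. Finally $\gamma$ is a $\Gmp$-bundle with contractible fibre, so $\roim\gamma$ of a shifted local system on $\bdot T^*_aX$ is a shifted local system on $S^*_aX$; thus $\van^0(K)$ is a local system on $S^*_aX$.

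As to where the difficulty lies: in the first assertion all the substance is packaged in Lemma~\ref{lem:psiphinorm} and \cite[\S6]{DK18}, so the only thing to get right is the reduction to normal form, which is harmless once one notes that the skyscraper $H^0(K)$ is invisible both on sectors at $a$ and after applying $\Eopb{j_a}$. The main obstacle is in the second assertion: one must make precise the commutations of $\sh$ with $\Eoim\gamma$ and of $e$ with $\Eopb u$ over the bordered spaces at hand, with $\gamma$ not proper — equivalently, that on conic sheaf-type objects $\sh\circ\Emu^\sph_{\st a}$ recovers the classical sphere-microlocalization of the associated sheaf. Granting this bookkeeping from \cite{DK19,DK20}, the remainder is the classical fact underlying \eqref{eq:intronupsi}, that $\mu^\sph_{\st a}$ of a conic perverse sheaf on a line with a single singular point is a shifted local system on the circle.
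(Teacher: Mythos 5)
Your proposal is correct and follows essentially the same route as the paper, whose proof consists precisely of the two citations you elaborate: part (i) is deduced as a particular case of Lemma~\ref{lem:psiphinorm} (with the normal-form hypothesis supplied by Definition~\ref{def:ECc}~(ii) and the Stokes-filtration formalism of \cite[\S6]{DK18}), and part (ii) is deduced from Proposition~\ref{pro:numuperv}. You merely make explicit the reduction to normal form and the conic sphere-trace computation that the paper leaves implicit, which is consistent with its intent.
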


\begin{proof}
(i) is a particular case of Lemma~\ref{lem:psiphinorm}, and
(ii) follows from Proposition~\ref{pro:numuperv}.
\end{proof}

Refer to Appendix~\ref{se:lambda} for an alternative description of the vanishing cycles $\van^0(K)$ as a local system on $S_a X$, via some blow-up transforms.

\section{Fourier transform on the affine line}\label{se:Fou}

Let $K$ be an enhanced perverse ind-sheaf on the affine line, and assume that so is its shifted enhanced Fourier-Sato transform $\plap K\defeq\lap K[1]$.
The stationary phase formula provides a relation (see \eqref{eq:introPhiPhi}) between the graded components of the Stokes filtrations of $K$ and $\plap K$, for degrees which are not linear (modulo bounded function). We discuss here the case of linear degrees.

\subsection{Linear exponential factors}

Let $z$ be a coordinate on the complex line $\V$, and $w$ the dual coordinate on  $\W$, so that the pairing $\V\times \W\to\C$ is given by $(z,w)\mapsto z w$.
The underlying real vector spaces are in duality by the pairing $\langle z,w\rangle=\Re(z w)$.
Denoting by $\PP=\V\cup\st{\infty}$ the complex projective line with affine chart $\V$, one has $\V_\infty\simeq(\V,\PP)$. Similarly, $\W_\infty\simeq(\W,\bb)$, for $\bb=\W\cup\st{\infty}$.

Let $\Eperv(\ifield_{\V_\infty})$ be the full triangulated subcategory of $\BECRc{V_\infty}$ whose objects are of the form $\Eopb j K$ for some $K\in\Eperv(\ifield_{\PP})$.
Here, $j\colon\V_\infty\hookrightarrow\PP$ is the natural morphism. 

Consider the enhanced Fourier-Sato transform
\[
\BECp{\V_\infty}\to\BECp{\W_\infty}, \quad
K\mapsto\plap K\defeq\lap K[1].
\]
Here, the shift ensures compatibility with the Riemann-Hilbert correspondence.

\begin{theorem}\label{thm:EFouPerv}
Let $K\in\Eperv(\ifield_{\V_\infty})$. Assume  $\plap K\in\Eperv(\ifield_{\W_\infty})$. Then:
\begin{itemize}
\item[(i)]
for any $a\in\V$, under the canonical identification $S_\infty\bb \simeq S_a^*\V$, there is an
isomorphism of local systems
\[
\gnear[\infty]{aw}(\plap K) \simeq \van^0(K)\,;
\]
\item[(ii)]
for any $b\in\W$, under the canonical identification $S_b^*\W \simeq S_\infty\PP$, there is an
isomorphism of local systems
\[
\van[b]^0(\plap K) \simeq \opb{r}\gnear[\infty]{-bz}(K)\,,
\]
\end{itemize}
where $r$ denotes the antipodal map.
\end{theorem}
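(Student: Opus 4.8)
The plan is to deduce (ii) from (i), reduce (i) to the case $a=0$, and prove that base case by unwinding the definitions and invoking the enhanced smash functor.

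First, (ii) follows from (i). Applying (i) with the roles of $\V$ and $\W$ exchanged and $K$ replaced by $\plap K$ gives, for $b\in\W$, an isomorphism $\gnear[\infty]{bz}\bl\plap{(\plap K)}\br\simeq\van[b]^0(\plap K)$ on $S_\infty\PP\simeq S_b^*\W$. Since the shifted enhanced Fourier--Sato transform $\plap{(\ast)}$ is an equivalence recovering $K$ up to the antipodal involution $r$, one has $\plap{(\plap K)}\simeq\opb r K$; as $r$ sends the linear exponential $\e^{bz}$ to $\e^{-bz}$ and acts on $S_\infty\PP$ as the antipodal map, feeding this into the definition of $\gnear[\infty]{bz}$ rewrites the left-hand side as $\opb r\gnear[\infty]{-bz}(K)$, which is (ii).

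For the reduction of (i) to $a=0$ I would use translation invariance of the Fourier transform: translating $K$ by $a$ on $\V$ corresponds, under $\lap{(\ast)}$, to twisting $\plap K$ by the linear exponential $\e^{aw}$ on $\W$. Hence $\gnear[\infty]{aw}(\plap K)$ is identified with $\gnear[\infty]0$ of the Fourier transform of the $a$-translate of $K$, and $\van^0(K)$ is identified with $\van[0]^0$ of that same translate under the translation-induced isomorphism $S_a^*\V\simeq S_0^*\V$. Thus (i) reduces to the statement that
\[
\gnear[\infty]0(\plap K)\simeq\van[0]^0(K)
\]
on $S_\infty\bb\simeq S_0^*\V$, for every $K\in\Eperv(\ifield_{\V_\infty})$ with $\plap K\in\Eperv(\ifield_{\W_\infty})$.

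To prove this, I would unwind Definition~\ref{def:psiphi}: the two sides are $\sh\bl\Enu_{\st\infty}^\sph(\plap K)\br$ and $\sh\bl\Emu_{\st 0}^\sph(K)\br[1]$, and by Proposition~\ref{pro:numuperv} (applied to $\plap K$ at $\infty$ and to $K$ at $0$) both $\Enu_{\st\infty}^\sph(\plap K)$ and $\Emu_{\st 0}^\sph(K)$ are of sheaf type. So it suffices to build a natural isomorphism of conic enhanced ind-sheaves $\Enu_{\st\infty}^\sph(\lap K)\simeq\Emu_{\st 0}^\sph(K)$ (equivalently $\Enu_{\st\infty}^\sph(\plap K)\simeq\Emu_{\st 0}^\sph(K)[1]$, the shift being absorbed by $\plap K=\lap K[1]$); this is essentially the isomorphism highlighted in the Introduction. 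Now $\Emu_{\st 0}(K)=\bl\Enu_{\st 0}(K)\br^\wedge$ is the enhanced Fourier--Sato transform, over $\inb{(T_0\V)}\to\inb{(T_0^*\V)}$, of the specialization of $K$ at $0$, whereas $\Enu_{\st\infty}(\lap K)$ is the specialization of $\lap K$ at $\infty\in\bb$; the comparison between these is exactly the content of the enhanced smash functor of \cite[\S6]{DK19} (the enhancement of the smash functor of \cite[\S6]{DHMS18}). That functor matches, compatibly with $\lap{(\ast)}$, the geometry of the normal deformation of $\bb$ along $\st\infty$ with that of the Fourier--Sato kernel over a neighbourhood of $0$ in $\V$, and should thereby yield a canonical isomorphism between the specialization of $\lap K$ at $\infty$ and the Fourier--Sato transform of the specialization of $K$ at $0$. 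Passing to the circles of directions, and using that the canonical identification $S_\infty\bb\simeq S_0^*\V$ is the identity (the inversion $w\mapsto 1/w$ reflecting directions once and the Fourier--Sato pairing reflecting them once more), would then give the desired isomorphism; perversity of the resulting local systems comes from Lemma~\ref{lem:psiphinorm} and Proposition~\ref{pro:numuperv}.

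The hard part will be this last step, i.e.\ making the smash-functor comparison precise. Two difficulties stand out. First, $\Enu$ is built from the direct image along the real oriented blow-up, which does not commute with proper direct image, hence not with $\lap{(\ast)}$; transporting the Fourier--Sato transform across the two composite integral transforms will require exploiting conicity together with the internal ``rescaling at infinity'' mechanism of the smash functor. Second, the bordered-space bookkeeping along \S\ref{sse:Founumu} is delicate: one works with the compactification $\bb$ of $\W$ at $\infty$ and the attendant normal deformation and real oriented blow-up spaces, and must check that the relevant closures of graphs are proper and that all the geometric identifications---including the net-trivial reflection on directions above and the antipodal map $r$ entering (ii)---are the asserted ones. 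Once these points are settled, the $\R$-constructibility and local-system statements follow from Lemma~\ref{lem:psiphinorm} and Proposition~\ref{pro:numuperv}.
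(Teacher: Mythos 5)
Your reductions coincide with the paper's: (ii) is deduced from (i) via the inversion formula $\plap\plap K\simeq\Eopb r K$ (\cite[\S5.2]{DK19}), the translation $\tau_a$ reduces (i) to $a=0$, and, since $\Enu^\sph_{\st\infty}(\plap K)$ and $\Emu^\sph_{\st 0}(K)$ are of sheaf type (Proposition~\ref{pro:numuperv}), everything comes down to producing one isomorphism $\Enu^\sph_{\st\infty}(\plap K)\simeq\Emu^\sph_{\st 0}(K)$. Up to that point your argument is exactly the paper's.

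The gap is in that last step, which you explicitly leave open (``the hard part will be this last step''). Naming the smash functor is not yet a proof: what is needed, and what the paper supplies, are two precise statements. First, the compatibility of enhanced microlocalization with the enhanced Fourier--Sato transform through the smash functor, $\Emu_{\st 0}(K)\simeq\Esm_\W(\lap K)$, i.e.\ \eqref{eq:smmu}, which is simply quoted from \cite[Proposition~6.6]{DK19}; this already disposes of your first worry about transporting $\lap{(\ast)}$ across the specialization, so no new conicity/rescaling argument is needed there. Second --- and this is the genuinely new ingredient --- the identification $\Enu^\sph_{\st\infty}(L)\simeq\Esm^\sph(L)$ for an arbitrary object $L$ on the bordered affine line (Lemma~\ref{lem:nusm2}), applied on the dual side to $L=\lap K$. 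This is a purely geometric statement with no Fourier transform involved: using $\Enu^\sph\simeq\Enu^\rb$ \eqref{eq:sphrb} and $\Esm_{\V|\bdot\V}\simeq\Enu_{H|\bdot\V}$ \eqref{eq:smnu}, it reduces to matching $\RB[\infty]\PP$ and $S_\infty\PP$ with the corresponding halves of the real oriented blow-up of the sphere compactification $\SV$ along the hyperplane $H$ at infinity, via a small commutative diagram with cartesian squares. Your sketch formulates neither statement, and the two difficulties you flag (non-commutation of $\Enu$ with $\lap$, bordered bookkeeping) are exactly what this decomposition resolves; as written, the decisive isomorphism $\Enu^\sph_{\st\infty}(\lap K)\simeq\Emu^\sph_{\st 0}(K)$ remains unproved, so the proposal is incomplete at its central point, although the route you indicate is the one the paper actually follows.
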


\begin{remark}
With notations as in \S\ref{sse:RH}, for $\field=\C$ let $K\defeq\dr^\enh(\shm)$ for $\shm$ an algebraic holonomic $\D_\V$-module. Then $\plap K \simeq \dr^\enh(\shm^\wedge)$, where $\shm^\wedge$ is the Fourier-Laplace transform of $\shm$. Since $\shm^\wedge$ is an algebraic holonomic $\D_{\W}$-module, $\plap K\in\Eperv(\ifield_{\W_\infty})$.
\end{remark}

\begin{proof}[Proof of Theorem~\ref{thm:EFouPerv}]
(ii) follows from (i).
In fact, interchanging the roles of $\V$ and $\W$, one has
\[
\van[b]^0(\plap K)
\simeq \gnear[\infty]{bz}(\plap\plap K) 
\underset{(*)}\simeq \gnear[\infty]{bz}(\Eopb r K) 
\simeq \opb r \gnear[\infty]{-bz}(K).
\]
For $(*)$ refer e.g.~to \cite[\S5.2]{DK21a}.

\smallskip\noindent(i)
The translation $\tau_a\colon\V\to\V$, $z\mapsto z+a$,
induces an identification $S_a\V\simeq S_0\V$. Moreover, one has
\begin{align*}
\van^0(K) &\simeq \van[0]^0(\Eopb{\tau_a}K), \\
\gnear[\infty]{aw}\bl\plap(\Eopb{\tau_a}K)\br 
&\simeq \gnear[\infty]{aw}\bl(\plap K)(-aw)\br \\
&\simeq \gnear[\infty]0\bl\plap K\br .
\end{align*}
Hence, we may assume $a=0$.
It is then enough to check that there is an isomorphism
\[
\gnear[\infty]0(\plap K) \simeq \van[0]^0(K).
\]
Since $\Enu^\sph_{\st{\infty}}(\plap K)$ and $\Emu_{\st 0}^\sph(K)$ are of sheaf type, it is equivalent to prove that there is an isomorphism
\[
\Enu^\sph_{\st{\infty}}(\plap K) \simeq \Emu_{\st 0}^\sph(K).
\]
This follows from Lemma~\ref{lem:nusm2} and \eqref{eq:smmu} below.
\end{proof}

\subsection{Smash functor}
We consider here the smash functor of \cite{DHMS18}, in its enhanced version from \cite{DK21a}, and establish a small additional result needed to complete the proof of Theorem~\ref{thm:EFouPerv}.

\smallskip
The sphere compactification $\SV\defeq\bl(\R_u\times \V)\setminus\st{(0,0)}\br/\Gmp$ of $\V$ decomposes as
$\SV = \V^+ \dunion H \dunion \V^-$, corresponding to $u>0$, $u=0$ or $u<0$.
Let us identify $\V=\V^+$.
Note that $H$ is a real hypersurface of $\SV$.
One has a natural identification 
$\bdot \V\seteq\V\setminus\st{0} = T^+_H\SV$, where $T^+_H\SV\subset\bdot T_H\SV$ denotes the normal directions pointing to $\V=\V^+$.
With these identifications, $\Enu_H$ induces a functor
\[
\Enu_{H|\bdot \V} \colon \BECp{\inb{\bdot\V}} \to \BECcon{\inb{\bdot\V}}
\]
which can be considered a ``specialization at $\infty$''.

The enhanced smash functor
\[
\Esm_\V\colon \BECp{\inb\V} \to \BECcon{\inb\V},
\]
for which we refer to \cite[\S6]{DK21a}, provides an extension of $\Enu_{H|\bdot \V}$ from $\inb{\bdot\V}$ to $\V$. In fact, $\Esm_\V$ induces a functor
\[
\Esm_{\V|\bdot \V} \colon \BECp{\inb{\bdot\V}} \to \BECcon{\inb{\bdot\V}},
\]
and one has
\begin{equation}\label{eq:smnu}
\Esm_{\V|\bdot \V} \simeq \Enu_{H|\bdot \V}.
\end{equation}
Recall also that, by \cite[Proposition~6.6]{DK21a}, for $K\in\BECp{\inb\V}$ one has
\begin{equation}\label{eq:smmu}
\Emu_{\st0}(K) \simeq \Esm_{\W}(\lap K).
\end{equation}

\begin{lemma}\label{lem:nusm2}
Let $K\in\BECp{\inb\V}$. Then, with the natural identification $S_\infty\PP\simeq\bdot\V/\Gmp$, one has
\[
\Enu^\sph_{\st\infty}(K) \simeq \Esm_\V^\sph(K).
\]
\end{lemma}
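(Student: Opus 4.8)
The plan is to reduce the statement to a purely local comparison of the two functors on the open part $\inb{\bdot\V}\subset\inb\V$, where it is already known, and then to check that the "boundary contribution" at $0\in\V$ agrees on both sides. Concretely, recall that $S_\infty\PP\simeq\bdot\V/\Gmp$, and that $\Enu^\sph_{\st\infty}=\Eoim\gamma\Eopb u\Enu_{\st\infty}$, $\Esm^\sph_\V=\Eoim\gamma\Eopb u\Esm_\V$, where $u\colon\inb{(\bdot T_HX)}\to\inb{(T_HX)}$ and $\gamma\colon\inb{(\bdot T_HX)}\to S_\infty\PP$ are the natural morphisms as in \S\ref{sse:Founumu}. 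Since both $\Eopb u$ and $\Eoim\gamma$ only see the restriction to the complement of the zero-section, it suffices to prove $\Eopb u\,\Enu_{\st\infty}(K)\simeq\Eopb u\,\Esm_\V(K)$ in $\BECcon{\inb{\bdot\V}}$, i.e.\ the two conic objects agree away from the zero-section. Under the identifications $\bdot\V=T^+_H\SV$ and $\inb{\bdot\V}$ as in the discussion of the smash functor, the restriction of $\Enu_{\st\infty}$ to $\inb{\bdot\V}$ is by definition $\Enu_{H|\bdot\V}$ (the relevant part of Sato's specialization at the hypersurface $H$ at infinity, restricted to the normal directions pointing into $\V^+$), while the restriction of $\Esm_\V$ is $\Esm_{\V|\bdot\V}$. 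Thus the claim is exactly \eqref{eq:smnu}.

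So the real work is to justify that first reduction, namely that $\Enu^\sph_{\st\infty}(K)$ and $\Esm^\sph_\V(K)$ are determined by the restrictions of $\Enu_{\st\infty}(K)$ and $\Esm_\V(K)$ to $\inb{\bdot\V}$. For $\Enu^\sph$ this is immediate from the definition $\Enu^\sph_N=\Eoim\gamma\Eopb u\Enu_N$ together with $\Eopb u\Enu_N\simeq\Eopb\gamma\Enu^\sph_N$, recorded in \S\ref{sse:Founumu}; the functor literally factors through the restriction to $\inb{(\bdot T_NM)}$. For $\Esm^\sph_\V$ one uses the analogous factorization for the smash functor from \cite[\S6]{DK19}: $\Esm^\sph_\V$ is built from $\Esm_{\V|\bdot\V}$ in the same way, so again only $\Esm_\V(K)|_{\inb{\bdot\V}}=\Esm_{\V|\bdot\V}(K)$ enters. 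Combining these two factorizations with \eqref{eq:smnu} gives the isomorphism, and one checks it is compatible with the identification $S_\infty\PP\simeq\bdot\V/\Gmp$ since the same $\gamma$ and $u$ are used on both sides.

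The main obstacle I anticipate is purely bookkeeping: making the geometric identifications precise and checking they are compatible. Specifically, one must match the sphere-at-infinity picture $S_\infty\PP$ (coming from the real oriented blow-up of $\PP$ at $\infty$, via $S_NM$ with $N=\st\infty$, $M=\PP$) with the hypersurface-at-infinity picture $\bdot\V/\Gmp=T^+_H\SV/\Gmp$ used to define the smash functor, and verify that under this match $\Enu^\sph_{\st\infty}$ on $\PP$ restricts to the same conic object on $\inb{\bdot\V}$ as does $\Enu_{H|\bdot\V}$. This is essentially the content already present in \cite[\S6]{DK19} and \cite{DHMS18}; once the dictionary is fixed, the comparison is formal via \eqref{eq:smnu}. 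No subanalyticity or constructibility hypotheses are needed beyond what is stated, since \eqref{eq:smnu} holds for arbitrary $K\in\BECp{\inb\V}$.
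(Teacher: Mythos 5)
Your reduction (factor both sides through the complement of the zero-section and invoke \eqref{eq:smnu}) uses the right ingredients, but the step you dismiss as ``by definition'' --- that the restriction of $\Enu_{\st\infty}$ to $\inb{\bdot\V}$ is $\Enu_{H|\bdot\V}$ --- is precisely the mathematical content of the lemma, and your proposal never supplies an argument for it. Note that $\Eopb u\Enu_{\st\infty}(K)$ and $\Eopb u\Esm_\V(K)$ do not even live on the same bordered space: the former is an object on $\inb{(\bdot T_\infty\PP)}$, the punctured tangent plane at the point $\infty\in\PP$, while the latter lives on $\inb{\bdot\V}$, identified with $T^+_H\SV$, half of the normal bundle of the hypersurface $H\subset\SV$. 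There is no canonical $\Gmp$-equivariant identification between these two spaces (in coordinates the passage from $\V$ to a chart at $\infty$ is $z\mapsto 1/z$, which is nonlinear and reverses the scaling), and $\Enu_{\st\infty}$ (built from the normal deformation of $\PP$ along $\st\infty$) and $\Enu_H$ (built from the normal deformation of $\SV$ along the hypersurface $H$) are genuinely different functors. This comparison is also not contained in \cite[\S6]{DK19} or \cite{DHMS18}: those references give \eqref{eq:smnu}, i.e.\ the smash-versus-$\Enu_H$ half, but not the $\Enu_{\st\infty}$-versus-$\Enu_H$ half, so labelling it ``bookkeeping'' defers the real work to places where it is not done.

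The mechanism that closes this gap in the paper is the passage to real oriented blow-ups: by \eqref{eq:sphrb} one may replace the spherical specializations by the blow-up functors $\Enu^\rb$, and, unlike the tangent/normal bundles, the blow-ups are canonically identified, $\RB[\infty]\PP\simeq\bl\RB[H]\SV\br^+$ and $S_\infty\PP\simeq\bl S_H\SV\br^+$, compatibly with all the maps $i$, $j$, $j_N$ of \eqref{eq:blowMN}. Hence the two composites $\Eopb i\Eoim j\Eopb{j_N}$ literally coincide, giving $\Enu^\rb_{\st\infty}(K)\simeq\Enu^\rb_H(K)|_{(S_H\SV)^+}$, after which \eqref{eq:smnu} concludes. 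Your plan becomes a proof once the ``by definition'' step is replaced by this (or an equivalent) argument; as written, the central identification is asserted rather than proved.
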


\begin{proof}
With $H,\V^\pm\subset\SV$ defined as above, one has $\RB[H]\SV = (\RB[H]\SV)^+ \dunion (\RB[H]\SV)^+$ and $S_H\SV=(S_H\SV)^+\dunion(S_H\SV)^-$.
Moreover, the identification $\V\simeq\V^+$ implies identifications $\RB[\infty]\PP\simeq(\RB[H]\SV)^+$ and $S_\infty\PP\simeq (S_H\SV)^+$. 

With these identifications, and using \eqref{eq:sphrb} and \eqref{eq:smnu}, it is enough to prove the isomorphism
\[
\Enu^\rb_{\st\infty}(K) \simeq \Enu_H^\rb(K)|_{S^+_H\SV}.
\]
This follows by considering the commutative diagram
\[
\xymatrix{
\inb\V \ar@{-}[d]^\wr & \inb{\bdot\V} \ar@{_(->}[l] \ar@{^(->}[r]  \ar@{-}[d]^\wr  & \RB[\infty]\PP \ar@{-}[d]^\wr  & S_\infty\PP \ar@{_(->}[l] \ar@{-}[d]^\wr \\
\inb{\V^+} & \inb{\bdot\V^+} \ar@{_(->}[l] \ar@{^(->}[r] \ar@{}[ur]|-\square & \bl\RB[H]\SV\br^+ \ar@{}[ur]|-\square & \bl S_H\SV\br^+. \ar@{_(->}[l]
}
\]
\end{proof}

\appendix

\section{Vanishing cycles by blow-up transform}\label{se:lambda}

\subsection{Blow-up transforms}
Let $M$ be a real analytic manifold, and $N\subset M$ a smooth submanifold.
As in \eqref{eq:blowMN} consider the real oriented blowup $\RB M$ of $M$ with center $N$, and the associated commutative diagram of bordered spaces
\[
\xymatrix@R=3ex{
S_NM \ar@{^(->}[r]^-{i} \ar[d]_\sigma & \RB M \ar[d]_{p}
& \inb{(M\setminus N)} \ar@{_(->}[dl]^-{j_N} \ar@{_(->}[l]_-{j} \\
N \ar@{^(->}[r]^-{i_N} \ar@{}[ur]|-\square & M,
}
\]
where we write for short $i=i_\rb$, $j=j_\rb$ and $p=\prb$.

\begin{definition}
For $K\in\BECp M$, consider the objects of $\BECp{S_NM}$
\begin{align*}
\enh\lambda^\rb_N(K)
&\defeq \Eepb i\Eopb p K[1], \\
\enh\widetilde\lambda^\rb_N(K)
&\defeq \Eopb i\Eepb p K.
\end{align*}
We denote by $\lambda^\rb_N$ and $\widetilde\lambda^\rb_N$ the analogous functors for sheaves.
\end{definition}

Note that one has
\[
e \circ \lambda^\rb_N \simeq \enh\lambda^\rb_N \circ e, \quad
e \circ \widetilde\lambda^\rb_N \simeq \enh\widetilde\lambda^\rb_N \circ e,
\]
and similarly for $e$ replaced by $\epsilon$, $\epsilon^+$ or $\epsilon^-$.

\begin{remark}
Note that $\enh\lambda^\rb_N\not\simeq\enh\widetilde\lambda^\rb_N$ in general, as shown by the following example. (See however Proposition~\ref{pro:phinu'S}.)
For $M=\R_x$ and $N=\{0\}$, one has $\RB M\simeq \st{x\leq 0}\dunion\st{x\geq 0}$. Restricted to the left component, the maps $i$ and $p$ are the embeddings $\st{0}\to[i]\st{x\leq 0}\to[p]\R$. Then, for $F=\field_{\st{x>0}}$, one has
\begin{align*}
\lambda^\rb_N(F) &\simeq \epb i\opb p \field_{\st{x>0}}[1] \simeq \epb i\bl\field_{\st{x>0}}|_{\st{x\leq 0}}[1]\br \simeq 0,\\
\widetilde\lambda^\rb_N(F) &\simeq \opb i\epb p \field_{\st{x>0}} \simeq \bl\rsect_{\st{x\leq 0}}\field_{\st{x>0}}\br_0 \simeq \field[-1].
\end{align*}
\end{remark}

\begin{lemma}\label{lem:dtpsiphi}
For $K\in\BECp M$, there are distinguished triangles
\begin{itemize}
\item[(i)] $\Eopb\sigma\Eopb i_N K \to\Enu^\rb_N(K) \to[c] \enh\lambda^\rb_N(K) \to [+1]$,
\item[(ii)] $\Eepb\sigma\Eepb i_N K \to\enh\widetilde\lambda^\rb_N(K) \to[v] \Enu^\rb_N(K) \to [+1]$.
\end{itemize}
\end{lemma}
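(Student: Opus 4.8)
The plan is to obtain both triangles by pulling back, along $\Eopb i$, a single recollement distinguished triangle attached to the closed embedding $i\colon S_NM\hookrightarrow\RB M$ and its open complement $j\colon\inb{(M\setminus N)}\hookrightarrow\RB M$, and then identifying the resulting terms via the two relations $p\circ i=i_N\circ\sigma$ and $p\circ j=j_N$ read off from the diagram.

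First I would recall the standard local-cohomology distinguished triangle for enhanced ind-sheaves (see \cite{DK16}): for any $G\in\BECp{\RB M}$ one has
\[
\Eoim i\,\Eepb i\,G\to G\to\Eoim j\,\Eopb j\,G\to[+1].
\]
Applying the triangulated functor $\Eopb i$, and using that $\Eopb i\,\Eoim i\simeq\id$ (valid because $i$ is a closed embedding, so $\Eoim i$ is fully faithful), this becomes
\[
\Eepb i\,G\to\Eopb i\,G\to\Eopb i\,\Eoim j\,\Eopb j\,G\to[+1].
\]

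Next I would specialize $G$ to $\Eopb p K$ and to $\Eepb p K$ and identify the three terms. From $p\circ i=i_N\circ\sigma$ and functoriality one gets $\Eopb i\,\Eopb p\simeq\Eopb\sigma\,\Eopb{i_N}$ and $\Eepb i\,\Eepb p\simeq\Eepb\sigma\,\Eepb{i_N}$; from $p\circ j=j_N$ one gets $\Eopb j\,\Eopb p\simeq\Eopb{j_N}$, while $\Eopb j\,\Eepb p\simeq\Eepb j\,\Eepb p\simeq\Eepb{j_N}\simeq\Eopb{j_N}$ since $j$ and $j_N$ are open embeddings. Hence in both cases the rightmost term is $\Eopb i\,\Eoim j\,\Eopb{j_N}K=\Enu^\rb_N(K)$. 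For $G=\Eopb p K$ the triangle reads
\[
\Eepb i\,\Eopb p K\to\Eopb\sigma\,\Eopb{i_N}K\to\Enu^\rb_N(K)\to[+1],
\]
and rotating it once yields (i), the shift $[1]$ in $\enh\lambda^\rb_N(K)=\Eepb i\,\Eopb p K[1]$ arising precisely from the rotation. For $G=\Eepb p K$ it reads
\[
\Eepb\sigma\,\Eepb{i_N}K\to\Eopb i\,\Eepb p K\to\Enu^\rb_N(K)\to[+1],
\]
which, since $\Eopb i\,\Eepb p K=\enh\widetilde\lambda^\rb_N(K)$, is (ii). The morphisms $c$ and $v$ are the ones arising from this construction.

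I do not expect a genuine obstacle here: the argument is entirely formal once one has the recollement triangle and the identity $\Eopb i\,\Eoim i\simeq\id$ available in the enhanced setting over bordered spaces — both of which hold, $i$ being a closed, hence semi-proper, embedding. The only points demanding care are keeping track of the single shift so that the triangles appear in exactly the stated form, and checking that the two ways of writing $\Enu^\rb_N(K)$ (via $\Eopb j\,\Eopb p$ for $G=\Eopb p K$ and via $\Eopb j\,\Eepb p$ for $G=\Eepb p K$) are matched by canonical isomorphisms, so that the middle maps do land in $\Enu^\rb_N(K)$ on the nose.
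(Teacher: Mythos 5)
Your proof is correct and takes essentially the same approach as the paper: both triangles are obtained from the standard open/closed recollement triangles on $\RB M$ applied to $\Eopb p K$ and $\Eepb p K$, combined with the identifications $\Eopb i\Eopb p\simeq\Eopb\sigma\Eopb{i_N}$, $\Eepb i\Eepb p\simeq\Eepb\sigma\Eepb{i_N}$ and $\Eopb j\Eopb p\simeq\Eopb j\Eepb p\simeq\Eopb{j_N}$, and your argument for (ii) is literally the paper's. The only (harmless) variation is in (i): the paper applies $\Eepb i$ to the triangle $\Eeeim j\Eopb j L\to L\to\Eeeim i\Eopb i L\to[+1]$ with $L=\Eopb p K$, which tacitly identifies $\Eepb i\Eeeim j\Eopb{j_N}K$ with $\Enu^\rb_N(K)[-1]$ along the boundary, whereas you apply $\Eopb i$ to the triangle $\Eoim i\Eepb i L\to L\to\Eoim j\Eopb j L\to[+1]$ and rotate once, which sidesteps that identification.
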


\begin{proof}
(i) For $L\in\BECp{\RB M}$, there is a distinguished triangle
\[
\Eeeim j\Eopb j L \to
L \to
\Eeeim i\Eopb i L \to[+1].
\]
When $L=\Eopb p K$, the above distinguished triangle reads
\[
\Eeeim j\Eepb j_N K \to \Eopb p K \to \Eeeim i\Eopb\sigma \Eopb i_N K\to[+1].
\]
By applying $\Eepb i$ we get (i).

\smallskip\noindent(ii)
Consider the distinguished triangle
\[
\Eoim i\Eepb i L \to
L \to
\Eoim j\Eepb j L \to[+1].
\]
When $L=\Eepb p K$, the above distinguished triangle reads
\[
\Eoim i\Eepb \sigma \Eepb{i_N} K \to
\Eepb p K \to
\Eoim j\Eopb {j_N} K \to[+1].
\]
One concludes by applying $\Eopb i$.
\end{proof}

The following result is clear from the definitions and \cite[Lemma~4.7]{DK21a}.

\begin{lemma}\label{lem:rbdual}
For $K\in\BECRc M$, one has
\[
\Edual\Enu^\rb_N(K) \simeq \Enu^\rb_N(\Edual K), \quad
\Edual\enh\widetilde\lambda^\rb_N(K) \simeq \enh\lambda^\rb_N(\Edual K)[-1].
\]
\end{lemma}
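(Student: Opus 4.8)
The plan is to deduce both isomorphisms from two inputs: first, the commutation of the Verdier dual $\Edual$ with the external operations on $\R$-constructible enhanced ind-sheaves, i.e.\ the exchange isomorphisms $\Edual\Eopb f\simeq\Eepb f\Edual$, $\Edual\Eepb f\simeq\Eopb f\Edual$, $\Edual\Eoim f\simeq\Eeeim f\Edual$, $\Edual\Eeeim f\simeq\Eoim f\Edual$, together with biduality; and second, the self-duality of enhanced specialization, $\Edual\Enu_N(K)\simeq\Enu_N(\Edual K)$, which follows from \cite[Lemma~4.7]{DK19}.

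The second isomorphism is a direct computation. Unwinding $\enh\widetilde\lambda^\rb_N(K)=\Eopb i\Eepb p K$ and commuting $\Edual$ inward through the two operations, one gets
\[
\Edual\enh\widetilde\lambda^\rb_N(K)\simeq\Eepb i\,\Edual\,\Eepb p K\simeq\Eepb i\,\Eopb p\,\Edual K,
\]
and, since $\enh\lambda^\rb_N(L)=\Eepb i\Eopb p L[1]$ by definition, the right-hand side is precisely $\enh\lambda^\rb_N(\Edual K)[-1]$. Applying this to $\Edual K$ and dualizing once more also yields $\Edual\enh\lambda^\rb_N(K)\simeq\enh\widetilde\lambda^\rb_N(\Edual K)[-1]$, which we record for use below.

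For the first isomorphism I would argue as follows. Using $\Enu^\rb_N\simeq\Enu^\sph_N\simeq\Eoim\gamma\Eopb u\Enu_N$ and the fact that the sphere-bundle morphisms $u$ (an open embedding) and $\gamma$ are compatible with $\Edual$, the statement reduces to the self-duality of $\Enu_N$ on the normal bundle, i.e.\ to \cite[Lemma~4.7]{DK19}. A more transparent variant avoids the passage to the normal bundle and instead dualizes the distinguished triangles of Lemma~\ref{lem:dtpsiphi}: applying the contravariant functor $\Edual$ to triangle~(i),
\[
\Eopb\sigma\Eopb{i_N}K\to\Enu^\rb_N(K)\to[c]\enh\lambda^\rb_N(K)\to[+1],
\]
and inserting the identifications $\Edual\enh\lambda^\rb_N(K)\simeq\enh\widetilde\lambda^\rb_N(\Edual K)[-1]$ and $\Edual\Eopb\sigma\Eopb{i_N}K\simeq\Eepb\sigma\Eepb{i_N}\Edual K$, one obtains, after rotation and a shift, a distinguished triangle with first two terms $\Eepb\sigma\Eepb{i_N}\Edual K$ and $\enh\widetilde\lambda^\rb_N(\Edual K)$ and with third term $\Edual\Enu^\rb_N(K)$ (up to shift). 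Comparing it with triangle~(ii) of Lemma~\ref{lem:dtpsiphi} applied to $\Edual K$,
\[
\Eepb\sigma\Eepb{i_N}\Edual K\to\enh\widetilde\lambda^\rb_N(\Edual K)\to[v]\Enu^\rb_N(\Edual K)\to[+1],
\]
and checking that the two leading morphisms agree, one identifies the third terms and concludes.

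The main obstacle is the input \cite[Lemma~4.7]{DK19}, i.e.\ the self-duality of enhanced specialization, which genuinely uses the geometry of the normal deformation; everything else is formal manipulation of adjunctions and octahedra. If one follows the triangle route instead, the delicate point becomes the sign-sensitive diagram chase showing that the connecting morphism produced by dualizing triangle~(i) is, under the exchange isomorphisms, exactly the variation morphism $v$ of triangle~(ii) — a verification again controlled by \cite[Lemma~4.7]{DK19} and the compatibility of the canonical and variation maps with duality.
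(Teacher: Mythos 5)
Your computation of the second isomorphism is correct, and it is essentially all there is behind the paper's one-line proof (the paper simply declares the lemma clear from the definitions and \cite[Lemma~4.7]{DK19}): unwinding $\enh\widetilde\lambda^\rb_N=\Eopb i\Eepb p$ against the duality--exchange isomorphisms for $\R$-constructible enhanced ind-sheaves gives exactly $\Eepb i\Eopb p\Edual K=\enh\lambda^\rb_N(\Edual K)[-1]$, and this is the only part of the lemma used later (in the proof of Proposition~\ref{pro:phinu'S}). Your guess that \cite[Lemma~4.7]{DK19} is the self-duality of $\Enu_N$ is plausible but unverified; the paper cites that lemma for the whole statement, so if it were literally the first displayed formula your first argument would be circular.

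The first isomorphism is where your proposal has a genuine gap, and it sits precisely at the step you dismiss as ``$u$ and $\gamma$ are compatible with $\Edual$''. Dualizing past $\Eoim\gamma$ produces $\Eeeim\gamma$, not $\Eoim\gamma$, and for the $\Gmp$-fibration $\gamma$ these differ by a shift; equivalently, working directly on $\RB M$, one has the purely formal recollement identity $\Eopb i\Eoim j\simeq\Eepb i\Eeeim j\,[1]$ on objects coming from the open part. Consequently the same exchange computation that proves the second formula yields $\Edual\Enu^\rb_N(K)\simeq\Eepb i\Eeeim j\Eopb{j_N}\Edual K\simeq\Enu^\rb_N(\Edual K)[-1]$, a \emph{shifted} statement rather than the displayed one; and your triangle route, carried out with the shifts tracked (dualize triangle (i) of Lemma~\ref{lem:dtpsiphi}, insert $\Edual\enh\lambda^\rb_N(K)\simeq\enh\widetilde\lambda^\rb_N(\Edual K)[-1]$, rotate, and compare with triangle (ii) for $\Edual K$), identifies $\Edual\Enu^\rb_N(K)[1]$ with $\Enu^\rb_N(\Edual K)$ --- again the shifted formula --- besides asserting, not proving, that the leading morphisms agree. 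The shift is not an artifact: classically, for $M=\R$, $N=\st0$, $F=\field_M$ one finds $\dual\,\nu^\rb_N(F)\simeq\field_{S_NM}$ while $\nu^\rb_N(\dual F)\simeq\field_{S_NM}[1]$, and the paper itself invokes \cite[Lemma~4.5]{DK19} for $\Emu^\sph$ \emph{with} such a shift in the proof of Proposition~\ref{pro:phinu'S}. So either the shift is already absorbed into the statement of \cite[Lemma~4.7]{DK19} (in which case a correct reduction must reproduce it, which yours does not), or the first displayed formula should carry a $[-1]$; in either case your argument does not establish the formula as stated, because the bookkeeping of this shift across $\gamma$ (equivalently across the open/closed decomposition of $\RB M$) is the only non-formal content of the first isomorphism.
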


\begin{lemma}\label{lem:KnuK2}
For $K\in\BECp M$ one has
\begin{itemize}
\item[(i)] $\enh\lambda^\rb_N(K) \simeq \enh\lambda^\rb_N(\Enu_N(K))$,
\item[(ii)] $\enh\widetilde\lambda^\rb_N(K) \simeq \enh\widetilde\lambda^\rb_N(\Enu_N(K))$,
\end{itemize}
with the identification $S_NM\simeq S_N(T_NM)$.
\end{lemma}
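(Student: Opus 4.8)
The plan is to obtain both isomorphisms by comparing the distinguished triangles of Lemma~\ref{lem:dtpsiphi}, applied twice: once to $K\in\BECp M$ with the embedding $N\into M$, and once to $\Enu_N(K)$ with $N$ embedded as the zero section of $T_NM$, using the canonical identifications $T_N(T_NM)\simeq T_NM$ and $S_N(T_NM)\simeq S_NM$ (under which the zero section $N\into T_NM$ plays the role of $N\into M$). For part~(i) this exhibits $\enh\lambda^\rb_N(K)$ as the cone of a natural morphism $\Eopb\sigma\Eopb{i_N}K\to\Enu^\rb_N(K)$ and $\enh\lambda^\rb_N(\Enu_N(K))$ as the cone of the corresponding morphism $\Eopb\sigma\Eopb{i_N}\Enu_N(K)\to\Enu^\rb_N(\Enu_N(K))$; for part~(ii) one instead gets distinguished triangles with middle terms $\enh\widetilde\lambda^\rb_N(K)$ and $\enh\widetilde\lambda^\rb_N(\Enu_N(K))$ and outer terms $\Eepb\sigma\Eepb{i_N}(\dummy)$ and $\Enu^\rb_N(\dummy)$.

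Next I would identify the two outer pairs of terms. For the $\Eopb{i_N}$ (resp.\ $\Eepb{i_N}$) term, restriction of the enhanced specialization to the zero section recovers restriction to $N$, that is $\Eopb{i_N}\Enu_N(K)\simeq\Eopb{i_N}K$ and $\Eepb{i_N}\Enu_N(K)\simeq\Eepb{i_N}K$, by \cite[Lemma~4.8]{DK19} (already used in the proof of Proposition~\ref{pro:numuperv}); applying $\Eopb\sigma$ (resp.\ $\Eepb\sigma$) then identifies the corresponding terms. For the $\Enu^\rb_N$ term, I would use $\Enu^\rb_N\simeq\Enu^\sph_N=\Eoim\gamma\Eopb u\Enu_N$ from \eqref{eq:sphrb} together with the fact that enhanced specialization along the zero section of a vector bundle is the identity on conic objects; since $\Enu_N(K)$ is conic this gives $\Enu_N(\Enu_N(K))\simeq\Enu_N(K)$, whence $\Enu^\rb_N(\Enu_N(K))\simeq\Enu^\rb_N(K)$.

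Because all the functors and connecting morphisms in Lemma~\ref{lem:dtpsiphi} are natural in the argument, these identifications are compatible with the two morphisms and assemble into an isomorphism of distinguished triangles; passing to the third terms yields the asserted isomorphisms $\enh\lambda^\rb_N(\Enu_N(K))\simeq\enh\lambda^\rb_N(K)$ and $\enh\widetilde\lambda^\rb_N(\Enu_N(K))\simeq\enh\widetilde\lambda^\rb_N(K)$. As a variant, (ii) could also be deduced from (i) by applying the duality isomorphisms of Lemma~\ref{lem:rbdual}, granting that $\Enu_N$ commutes with $\Edual$.

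The step I expect to be the main obstacle is the idempotence input — that enhanced specialization is the identity on conic objects, equivalently $\Enu^\rb_N(\Enu_N(K))\simeq\Enu^\rb_N(K)$ — together with checking that the resulting isomorphism is the one compatible with the canonical map $c$ of Lemma~\ref{lem:dtpsiphi}, so that one genuinely obtains a morphism of triangles and not merely an isomorphism of the two outer pairs of terms separately. The idempotence statement is the enhanced counterpart of the classical fact that the normal cone of a conic set equals that set; it should follow from the construction of $\Enu_N$ through the normal deformation, but the bookkeeping of the $\Gmp$-actions and of the identification $T_N(T_NM)\simeq T_NM$ is where the real content lies.
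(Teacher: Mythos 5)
There is a genuine gap, and it sits exactly where you flag a worry and then wave it away. Your identifications of the outer terms are the same as the paper's (the isomorphisms $\Eopb{i_N}\Enu_N(K)\simeq\Eopb{i_N}K$, $\Eepb{i_N}\Enu_N(K)\simeq\Eepb{i_N}K$ come from \cite[Lemma~4.8]{DK19}, and $\Enu^\rb_N(\Enu_N(K))\simeq\Enu^\rb_N(K)$ is \cite[Lemma~4.10]{DK19}, so the ``idempotence'' you single out as the main obstacle is available off the shelf). But the claim that these identifications ``assemble into an isomorphism of distinguished triangles'' because the functors in Lemma~\ref{lem:dtpsiphi} are natural in the argument is not justified: naturality gives compatibility with morphisms of objects over a \emph{fixed} base, whereas here the two triangles live over different spaces ($M$ and $T_NM$), and the two outer isomorphisms arise from two different geometric comparisons. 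What must be checked is that the square formed by $\Eopb\sigma\Eopb{i_N}(\dummy)\to\Enu^\rb_N(\dummy)$ for $K$ and for $\Enu_N(K)$ commutes under these identifications; equivalently, one must produce a morphism $\enh\lambda^\rb_N(\Enu_N(K))\to\enh\lambda^\rb_N(K)$ fitting into a morphism of the two triangles. This is precisely the substantial part of the paper's proof: it constructs such a natural morphism by an explicit diagram of bordered spaces obtained from iterated real oriented blow-ups of the normal deformation $\ND M$ (the spaces $U$, $\overline{\widetilde\jmath(U)}$, $\overline{\widehat\jmath(U)}$, $X$, etc.), with a chain of base-change and adjunction morphisms, and only then concludes by the two-out-of-three property for morphisms of distinguished triangles. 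Without that construction (or some equivalent compatibility check), passing to cones only gives, at best, a non-canonical comparison and does not prove the lemma. Note also that cones are not functorial, so even granting commutativity of the square you should phrase the conclusion as the paper does: a given vertical morphism between the third terms is an isomorphism because the other two are.

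A secondary point: your proposed shortcut for (ii) via duality (Lemma~\ref{lem:rbdual} plus $\Edual\Edual K\simeq K$ and commutation of $\Enu_N$ with $\Edual$) requires $\R$-constructibility, whereas the lemma is stated for arbitrary $K\in\BECp M$; the paper instead treats (ii) by the same geometric argument as (i).
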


\begin{proof}
Since the proofs are similar, we will only discuss (i).

\smallskip\noindent(a)
We will construct in part (b) below a natural morphism
\begin{equation}
\label{eq:lambdanu}
\enh\lambda^\rb_N(\Enu_N(K)) \to \enh\lambda^\rb_N(K).
\end{equation}
By Lemma~\ref{lem:dtpsiphi} (i), it enters the commutative diagram
\[
\xymatrix@R=2ex@C=2em{
\Eopb\sigma\Eopb o \Enu_N(K) \ar[d]^\wr \ar[r] & \Enu^\rb_N(\Enu_N(K)) \ar[d]^\wr  \ar[r] & \enh\lambda^\rb_N(\Enu_N(K)) \ar[d] \to [+1] \\
\Eopb\sigma\Eopb i_N K \ar[r] & \Enu^\rb_N(K)  \ar[r] & \enh\lambda^\rb_N(K) \to [+1] .
}
\]
Here, the first vertical isomorphism is due to \cite[Lemma~4.8]{DK21a}, and the second vertical isomorphism follows from \cite[Lemma~4.10]{DK21a}.
Hence, also the third vertical arrow is an isomorphism, and the statement follows.

\smallskip\noindent(b)
In order to obtain \eqref{eq:lambdanu}, we are going to connect the relevant spaces in a commutative diagram.

We refer to \cite[\S\S2.3, 2.4]{DK21a} for  notations and  details on the real oriented blow-up $M^\rb_N$, the real projective blow-up $M^\pb_N$, the normal deformation $M^\nd_N$, and the open embedding $M^\nd_N\subset\bl M\times\R\br^\pb_{N\times\st0}$.

With the natural identification $M\times\R_{>0}\simeq\Omega\subset M^\nd_N$, consider the open embeddings (see Figure~\ref{fig:jU})
\[
\xymatrix@R=1ex{
& (M\times\R)^\rb_{N\times\R} \\
\llap{$U\defeq$}\Omega^\rb_{N\times\R_{>0}} \ar@{^(->}[ur]^-j \ar@{^(->}[dr]_-{\widetilde\jmath} \ar@{^(->}[r]|-{\widehat\jmath}
& \bl M^\nd_N\br^\rb_{\overline{N\times\R_{\neq0}}} \\
& \bl(M\times\R)^\rb_{N\times\st0}\br^\rb_{\overline{N\times\R_{\neq0}}}.
}
\]
Here, $\widetilde\jmath$ is induced by the natural embedding $\overline\Omega\subset(M\times\R)^\rb_{N\times\st0}$, compatible with the open embedding $M^\nd_N\subset(M\times\R)^\pb_{N\times\st0}$ of \cite[\S2.4]{DK21a}. More precisely, there is a commutative diagram
\[
\xymatrix{
\overline\Omega \ar@{^(->}[r] \ar@{^(->}[d] & (M\times\R)^\rb_{N\times\st0} \ar[d]_q \\
M^\nd_N \ar@{^(->}[r] & (M\times\R)^\pb_{N\times\st0},
}
\]
where $q$ is the natural projection from the real oriented blow-up to the  projective blow-up.

Let $X$ be the closed subset of $\bl(M\times\R)^\rb_{N\times\st0}\br^\rb_{\overline{N\times\R_{\neq0}}}$ given by
\begin{align*}
X
&\defeq  \overline{\widetilde\jmath(U)} \setminus \widetilde\jmath(U) \\
&= (M\setminus N) \sqcup S_NM \sqcup \bdot T_NM \sqcup S_N(T_NM).
\end{align*}

\begin{figure}
\begin{tikzpicture}[scale=.5,baseline=0]
\filldraw[white!80!black] (-2,0) -- (0,0) -- (0,4) -- (-2,4) -- cycle ;
\filldraw[white!80!black] (1,0) -- (1,4) -- (4,4) -- (4,0) -- cycle ;
\draw[red] (-1,0) -- (-1,4) ;
\draw[red] (2,0) -- (2,4) ;
\draw[red] (3,0) -- (3,4) ;
\draw[red] (0,0) -- (0,4);
\draw[red] (1,0) -- (1,4);
      \draw (4,4) node[below left] {$\overline{j(U)}$};
      \draw[black,very thick] (-2,0) -- (0,0) ;
      \draw[black,very thick] (1,0) -- (4,0) node[below left] {$M^\rb_N$};
\filldraw (0,0) circle (2pt);
\filldraw (1,0) circle (2pt);
\end{tikzpicture}
\qquad
\begin{tikzpicture}[scale=.5,baseline=0]
\begin{scope}
\clip(-2.5,0) rectangle (4.5,4);
\filldraw[white!80!black] (-2.5,0) -- (0,0) -- (0,4) -- (-2.5,4) -- cycle ;
\filldraw[white] (0,0) circle (1);
\filldraw[white!80!black] (1,0) -- (1,4) -- (4.5,4) -- (4.5,0) -- cycle ;
\filldraw[white] (1,0) circle (1);
      \draw[domain=-pi/2+.1:pi/2-.1,smooth,variable=\x,red]  plot ({-1-cos(\x*180/pi)},{sin(\x*180/pi)+tan(\x*180/pi)});
      \draw[domain=-pi/2+.1:pi/2-.1,smooth,variable=\x,red]  plot ({2+cos(\x*180/pi)},{sin(\x*180/pi)+tan(\x*180/pi)});
      \draw[domain=-pi/2+.1:pi/2-.1,smooth,variable=\x,red]  plot ({3+cos(\x*180/pi)},{sin(\x*180/pi)+2*tan(\x*180/pi)});
\end{scope}
\draw[red] (0,1) -- (0,4);
\draw[red] (1,1) -- (1,4);
     \draw (4.5,4) node[below left] {$\overline{\widetilde\jmath(U)}$};
      \draw[black,very thick] (-2.5,0) -- (-1,0) arc (180:90:1) ;
      \draw[black,very thick] (1,1) arc (90:0:1) -- (4.5,0) node[below left] {$X$};
\filldraw (0,1) circle (2pt);
\filldraw (1,1) circle (2pt);
\end{tikzpicture}
\qquad
\begin{tikzpicture}[scale=.5,baseline=0]
\filldraw[white!80!black] (-2,0) -- (0,0) -- (0,4) -- (-2,4) -- cycle ;
\filldraw[white!80!black] (1,0) -- (1,4) -- (4,4) -- (4,0) -- cycle ;
\begin{scope}
\clip(-2,-2) rectangle (4,4);
      \draw[domain=-2:-.1,smooth,variable=\x,red] plot ({\x},{-1/\x});
      \draw[domain=1.1:4,smooth,variable=\x,red] plot ({\x},{1/(\x-1)});
      \draw[domain=1.1:4,smooth,variable=\x,red] plot ({\x},{2/(\x-1)});
\end{scope}
\draw[red] (0,0) -- (0,4);
\draw[red] (1,0) -- (1,4);
      \draw (4,4) node[below left] {$\overline{\widehat\jmath(U)}$};
      \draw[black,very thick] (-2,0) -- (0,0) ;
      \draw[black,very thick] (1,0) -- (4,0) node[below left] {$(T_NM)^\rb_N$};
\filldraw (0,0) circle (2pt);
\filldraw (1,0) circle (2pt);
\end{tikzpicture}
\caption{
The sets $\overline{j(U)}$, $\overline{\widetilde\jmath(U)}$ and $\overline{\widehat\jmath(U)}$ pictured in the case $M=\R$ and $N=\st0$.
The red lines are fibers of the projection $p_U\colon U\to\Omega\to M$.
(Color figure online.)
}
\label{fig:jU}
\end{figure}

Consider the commutative diagram with cartesian squares, where $\widetilde u$ and $u$ are open embeddings,
\[
\xymatrix{
& M \\
& U \ar[u]^-{p_U} \ar[rr]^-{\widehat p} \ar@{^(->}[dl]_-j \ar@{^(->}[d]_-{\widetilde\jmath} \ar@{^(->}[dr]_-{\widehat\jmath} && \Omega \ar[ull]_{p_\Omega} \ar@{^(->}[d]_-{\overline\jmath} \\
\overline{j(U)} & \overline{\widetilde\jmath(U)} \ar[l]^-{\widetilde r} & \overline{\widehat\jmath(U)} \ar@{}[ur]|-\square \ar[r]_-{\widetilde p} \ar@{_(->}[l]^-{\widetilde u} & \overline\Omega \\
M^\rb_N \ar@{}[ur]|-\square \ar@/^13ex/[uuur]^-p \ar@{^(->}[u]_-k & X \ar@{}[ur]|-\square \ar@{^(->}[u]_-{\widetilde k} \ar[l]^-r  & (T_NM)^\rb_N \ar@{}[ur]|-\square \ar[r]_-{\overline p} \ar@{_(->}[l]^-u \ar@{^(->}[u]_-{\widehat k} & T_NM \ar@{^(->}[u]_-{\overline k} \\
S_NM \ar@{}[ur]|-\square \ar@{^(->}[u]_-i & X\setminus(M\setminus N) \ar@{}[ur]|-\square \ar[l]^-{\overline r} \ar@{^(->}[u]_-{\widetilde \ell} & S_N(T_NM). \ar@{_(->}[l]^-{\overline\ell} \ar@{^(->}[u]_-{\overline\imath}
}
\]

Note that $\overline r \circ \overline\ell$ gives the identification $S_N(T_NM)\simeq S_NM$. Hence, by definition, \eqref{eq:lambdanu} is written as
\begin{equation}
\label{eq:lambdanu2}
\Eoim{\overline r} \Eoim{\overline\ell} \Eepb{\overline\imath} \Eopb{\overline p} \Enu_N(K)[1]
\to \Eepb i \Eopb p K[1].
\end{equation}

On one hand, there is a chain of morphisms
\begin{align*}
\Eepb{\overline\imath} \Eopb{\overline p} \Enu_N(K)
&\simeq \Eepb{\overline\imath} \Eopb{\overline p} \Eopb{\overline k} \Eoim{\overline\jmath} \Eopb{p_\Omega} K
\simeq \Eepb{\overline\imath} \Eopb{\widehat k} \Eopb{\widetilde p} \Eoim{\overline\jmath} \Eopb{p_\Omega} K \\
&\underset{(1)}\to \Eepb{\overline\imath} \Eopb{\widehat k} \Eoim{\widehat\jmath} \Eopb{\widehat p} \Eopb{p_\Omega} K
\simeq \Eepb{\overline\imath} \Eopb{\widehat k} \Eoim{\widehat\jmath} \Eopb{p_U} K \\
&\simeq \Eepb{\overline\imath} \Eopb{\widehat k} \Eopb{\widetilde u} \Eoim{\widetilde\jmath} \Eopb{p_U} K
\simeq \Eepb{\overline\imath} \Eopb u \Eopb{\widetilde k} \Eoim{\widetilde\jmath} \Eopb{p_U} K \\
& \underset{(2)}\simeq \Eepb{\overline\ell} \Eepb{\widetilde\ell} \Eopb{\widetilde k} \Eoim{\widetilde\jmath} \Eopb{p_U} K
=  \Eepb{\overline\ell} L ,
\end{align*}
where we set
\[
L \defeq \Eepb{\widetilde \ell} \Eopb{\widetilde k} \Eoim{\widetilde j} \Eopb{p_U} K.
\]
Here, $(1)$ follows by adjunction from the isomorphism $\Eopb{\widehat\jmath}\Eopb{\widetilde p}\Eoim{\overline\jmath} \simeq \Eopb{\widehat p}$, and $(2)$ uses the fact that $\Eopb u \simeq \Eepb u$.

Hence, there is a morphism
\begin{equation}
\label{eq:rll}
\Eoim{\overline r} \Eoim{\overline\ell} \Eepb{\overline\imath} \Eopb{\overline p} \Enu_N(K)
\to \Eoim{\overline r} \Eoim{\overline\ell} \Eepb{\overline\ell} L .
\end{equation}

On the other hand, there is a chain of isomorphisms
\begin{align*}
\Eepb i \Eopb p K
&\underset{(3)}\simeq \Eepb i \Eopb k \Eoim j \Eopb{p_U} K
\simeq \Eepb i \Eopb k \Eoim{\widetilde r} \Eoim{\widetilde j} \Eopb{p_U} K \\
&\underset{(4)}\simeq \Eepb i \Eoim r \Eopb{\widetilde k} \Eoim{\widetilde j} \Eopb{p_U} K
\simeq \Eoim{\overline r} \Eepb{\widetilde \ell} \Eopb{\widetilde k} \Eoim{\widetilde j} \Eopb{p_U} K \\
&\simeq \Eoim{\overline r} L.
\end{align*}
Here, $(3)$ easily follows using the identification $(M\times\R)^\rb_{N\times\R}\simeq(M^\rb_N)\times\R$, and $(4)$ uses the fact that $\widetilde r$ and $r$ are proper.

Hence, the natural morphism $\Eoim{\overline\ell} \Eepb{\overline\ell} L \to L$, combined with \eqref{eq:rll}, induces \eqref{eq:lambdanu2}.
\end{proof}

\subsection{The case of vector bundles}
Let $\tau\colon V\to N$ be a vector bundle, and $o\colon N\to V$ its zero section. Let $\bdot V=V\setminus o(N)$, and consider the quotient
$\gamma\colon \bdot V\to S_NV$ by the $\Gmp$-action.

Consider the projections
\[
\xymatrix{V & V\times_N\bdot V \ar[l]_-{p_1} \ar[r]^-{p_2} & \bdot V}.
\]
For $K\in\BECp V$ and $C\in\BECp{V\times_N\bdot V}$, we set
\begin{align*}
\Phi_C(K) &\defeq \Eeeim{{p_2}}\bl C \ctens \Eopb p_1 K \br, \\
\Psi_C(K) &\defeq \Eoim{{p_2}}\cihom( C , \Eepb p_1 K ).
\end{align*}

\begin{lemma}\label{lem:lambdakernel}
Let $K\in\BECcon V$. With the identifications $N\simeq o(N)\subset V$ and $T_NV\simeq V$, one has
\begin{align*}
\Eopb\gamma\enh\lambda^\rb_N(K) &\simeq \Phi_C(K), \\
\Eopb\gamma\enh\widetilde\lambda^\rb_N(K) &\simeq \Psi_C(K),
\end{align*}
for $C=\epsilon(\field_B)[1]$, with
$B=\st{(x,y)\in V\times_N\bdot V\semicolon x=\lambda y,\text{ for some }\lambda\geq0}$ a closed subset of $V\times_N\bdot V$.
\end{lemma}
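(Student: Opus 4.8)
The plan is to deduce both isomorphisms from a single geometric fact about the blow‑up. For the zero section $N=o(N)$ of a vector bundle, the real oriented blow‑up $\RB[N]V$ — unlike the blow‑up of a general submanifold — admits a retraction $\overline\sigma\colon\RB[N]V\to S_NV$ onto the exceptional sphere bundle, with $\overline\sigma\circ i=\id$, whose fibres are the half‑lines $\R_{\geq0}$ and whose restriction to the open complement $\overline\jmath\colon\bdot V\hookrightarrow\RB[N]V$ of the exceptional divisor is the $\Gmp$‑quotient map $\gamma\colon\bdot V\to S_NV$ (so $\overline\sigma\circ\overline\jmath=\gamma$). Using the model $\RB[N]V\simeq\set{(\rho,\omega)}{\rho\geq0,\ \omega\in S_NV}$, one checks directly that $B\simeq\RB[N]V\times_{S_NV}\bdot V$, the fibre product being taken over $\overline\sigma$ and $\gamma$; under this identification the two projections of $B$ are $\widetilde p\colon B\to\RB[N]V$ and $p_2|_B\colon B\to\bdot V$, and one has $p\circ\widetilde p=p_1|_B$. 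In particular $B$ fits into a cartesian square with vertical arrows $\overline\sigma,\gamma$ and horizontal arrows $\widetilde p,\,p_2|_B$.

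The first step is to rewrite the two blow‑up transforms for a conic $K$. Applying $\Eeeim{\overline\sigma}$ to the distinguished triangle $\Eoim i\Eepb iL\to L\to\Eoim{\overline\jmath}\Eopb{\overline\jmath}L\to[+1]$ with $L\defeq\Eopb pK$, and using $\overline\sigma\circ i=\id$ (so $\Eeeim{\overline\sigma}\Eoim i\simeq\id$), the first term becomes $\Eepb iL$. The third term $\Eeeim{\overline\sigma}\Eoim{\overline\jmath}\Eopb{\overline\jmath}L$ vanishes: since $K$ is conic, $\Eopb{\overline\jmath}L\simeq\Eopb{\overline\jmath}\Eopb{\overline\sigma}G$ for some $G$ on $S_NV$ (descent along the free $\Gmp$‑action on $\bdot V$), so this object is computed by applying $\Eeeim{\overline\sigma}$ to the distinguished triangle of $\Eopb{\overline\sigma}G$ along $i$, which reduces it to $\Eepb i\Eopb{\overline\sigma}G$ and $\Eeeim{\overline\sigma}\Eopb{\overline\sigma}G$; these vanish because the boundary point of $\R_{\geq0}$ has no local cohomology there and because $\R_{\geq0}$ has vanishing cohomology with compact support. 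Hence
\[
\enh\lambda^\rb_N(K)\simeq\Eeeim{\overline\sigma}\Eopb pK[1].
\]
The symmetric computation — applying $\Eoim{\overline\sigma}$ to the distinguished triangle $\Eeeim{\overline\jmath}\Eopb{\overline\jmath}L'\to L'\to\Eoim i\Eopb iL'\to[+1]$ with $L'\defeq\Eepb pK$, using $\Eoim{\overline\sigma}\Eoim i\simeq\id$, $\Eopb i\Eopb{\overline\sigma}\simeq\id$ and that $\R_{\geq0}$ is contractible — gives $\enh\widetilde\lambda^\rb_N(K)\simeq\Eoim{\overline\sigma}\Eepb pK$.

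The second step identifies the kernel transforms. Since $B$ is closed in $V\times_N\bdot V$ and $K\in\BECcon V$, convolution with $C=\epsilon(\field_B)[1]$ amounts to restricting $\Eopb{p_1}K$ to $B$ and shifting by $[1]$, while $\cihom(C,\Eepb{p_1}K)$ restricts $\Eepb{p_1}K$ to $B$ (with supports) and shifts by $[-1]$; pushing forward along $p_2$ and using $p_1|_B=p\circ\widetilde p$ gives
\[
\Phi_C(K)\simeq\Eeeim{p_2|_B}\Eopb{\widetilde p}\Eopb pK[1],\qquad
\Psi_C(K)\simeq\Eoim{p_2|_B}\Eepb{\widetilde p}\Eepb pK[-1].
\]
Base change along the cartesian square above (proper base change for the first identity, its dual for the second), together with $\Eepb\gamma\simeq\Eopb\gamma[1]$ — $\gamma$ being an oriented $\R$‑bundle — yields $\Phi_C(K)\simeq\Eopb\gamma\Eeeim{\overline\sigma}\Eopb pK[1]$ and $\Psi_C(K)\simeq\Eopb\gamma\Eoim{\overline\sigma}\Eepb pK$. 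Combining with the first step, $\Eopb\gamma\enh\lambda^\rb_N(K)\simeq\Phi_C(K)$ and $\Eopb\gamma\enh\widetilde\lambda^\rb_N(K)\simeq\Psi_C(K)$. (Alternatively, the second isomorphism follows from the first by Verdier duality, using Lemma~\ref{lem:rbdual} and the behaviour of $\Phi_C,\Psi_C$ under $\Edual$.)

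The step I expect to be the main obstacle is the kernel computation in the second step: making precise, in the enhanced ind‑sheaf formalism, the projection formula for $\ctens$ against $\epsilon(\field_B)$ and its counterpart for $\cihom$, and handling the bordered‑space bookkeeping for the non‑proper map $\overline\sigma$ and for the fibre products. Everything else — the identification $B\simeq\RB[N]V\times_{S_NV}\bdot V$, the manipulation of distinguished triangles, and base change — is routine once this is in place.
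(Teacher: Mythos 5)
Your argument is correct in outline and rests on the same geometry as the paper's proof, but it is organized differently, so it is worth comparing. The paper never introduces your retraction $\overline\sigma\colon\RB V\to S_NV$: it computes $\Eopb\gamma\enh\lambda^\rb_N(K)$ in a single chain of isomorphisms, factoring $i\circ\gamma$ through the auxiliary space $\bdot V\times\inb{(\R_{\geq 0})}$ --- which is exactly your $B\simeq\RB V\times_{S_NV}\bdot V$, parametrized by $(\tilde p,q_1)$ --- replacing $\Eepb{\tilde\gamma}$ by $\Eopb{\tilde\gamma}[1]$ because $\tilde\gamma$ is a $\bGmp$-bundle, and then invoking conicness exactly once, in the form that for objects conic in the ray variable the costalk along the zero section ($\Eepb{\tilde\imath_0}$) agrees with the proper pushforward along the ray ($\Eeeim{\tilde q_1}$); the kernel then appears as $\reim{(\tilde p,q_1)}\field_{\bdot V\times\R_{\geq0}}[1]\simeq\field_B[1]$. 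You instead first prove the absolute formula $\enh\lambda^\rb_N(K)\simeq\Eeeim{\overline\sigma}\Eopb p K[1]$ (and its counterpart $\Eoim{\overline\sigma}\Eepb p K$) on $S_NV$, feeding conicness in through descent of $K|_{\inb{(\bdot V)}}$ along $\gamma$ together with the excision triangle, and then conclude by base change over the cartesian square defining $B$. Your route yields a clean intermediate statement of independent interest and makes the second half essentially formal; its cost is the extra input that $\bGmp$-conic enhanced ind-sheaves on $\inb{(\bdot V)}$ descend to $S_NV$, which you use without citation --- it is part of the background of \cite{DK19} (it is what makes $\Eopb u\Enu_N\simeq\Eopb\gamma\Enu_N^\sph$ meaningful), but it should be quoted, and the K\"unneth-type identities behind your two vanishing claims ($\Eepb i\Eopb{\overline\sigma}G\simeq 0$ and $\Eeeim{\overline\sigma}\Eopb{\overline\sigma}G\simeq 0$) deserve a line of justification each, at the same level of detail as the paper's steps $(*)$ and $(**)$. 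One caveat: your alternative derivation of the $\Psi_C$ isomorphism by duality via Lemma~\ref{lem:rbdual} is not available under the stated hypotheses, since that lemma (and \cite[Lemma~4.5]{DK19}) require $\R$-constructibility whereas here $K$ is only assumed conic; keep the symmetric direct argument, which is also what the paper implicitly relies on when it declares the second isomorphism ``similar''.
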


\begin{proof}
Since the proofs are similar, let us only discuss the first isomorphism.

Consider the morphisms
\[
\xymatrix@R=3ex{
\bdot V & \bdot V\times\inb{(\R_{\geq 0})} \ar[l]^-{q_1} \ar@/^2.5ex/[rr]^{\tilde p} \ar[r]_-{\tilde\gamma} \ar[dl]^{\tilde\jmath} & \RB V \ar[r]_p & V \\
\bdot V \times \bR \ar[u]_{\tilde q_1} & \bdot V \ar[u]^{i_0} \ar[l]^-{\tilde\imath_0} \ar[r]_\gamma \ar@{}[ur]|\square & S_N V \ar[u]_i,
}
\]
where $i_0(x)=(x,0)$.
One has
\begin{align*}
\Eopb\gamma\enh\lambda^\rb_N(K)
&\simeq \Eepb\gamma\Eepb i\Eopb p K
\simeq \Eepb{i_0}\Eepb{\tilde\gamma}\Eopb p K \\
&\underset{(*)}\simeq \Eepb{i_0}\Eopb{\tilde\gamma}\Eopb p K[1]
\simeq \Eepb{i_0}\Eopb{\tilde p} K[1] \\
&\simeq \Eepb{i_0}\Eepb{\tilde\jmath}\Eeeim{\tilde\jmath}\Eopb{\tilde p} K[1]
\simeq \Eepb{\tilde\imath_0}\Eeeim{\tilde\jmath}\Eopb{\tilde p} K[1] \\
&\underset{(**)}\simeq \Eeeim{{\tilde q_1}}\Eeeim{\tilde\jmath}\Eopb{\tilde p} K[1]
\simeq \Eeeim{{q_1}}\Eopb{\tilde p} K[1] ,
\end{align*}
where $(*)$ is due to the fact that $\tilde\gamma$ is an $\inb{(\Gmp)}$-bundle, and $(**)$ holds because $\Eeeim{\tilde\jmath}\Eopb{\tilde p} K$ is $\bGmp$-conic with respect to the action on the second factor of $\bdot V\times\bR$.

It follows that $\Eopb\gamma\enh\lambda^\rb_N(K) \simeq \Phi_C(K)$
for $C \defeq \reim{(\tilde p,q_1)}\field_{\bdot V\times\R_{\geq0}}[1]$.
Since $(\tilde p,q_1)$ decomposes into
\[
\xymatrix@C=2em{(\tilde p,q_1)\colon\bdot V\times\R_{\geq0}\ar[r]^-\sim & B \ar@{^(->}[r]& V\times_N\bdot V},
\]
we have $\reim{(\tilde p,q_1)}\field_{\bdot V\times\R_{\geq0}}\simeq\field_B$.
\end{proof}

\subsection{Blow-up and vanishing cycles}
Let $X$ be a smooth complex curve, and $a\in X$.
Let $z$ be a coordinate on the complex vector line $T_a X$, and $w$ the dual coordinate on  $T^*_a X$, so that the pairing $T_a X\times T^*_a X\to\C$ is given by $(z,w)\mapsto z w$. Then, the  isomorphism
\[
c\colon \inb{(\bdot T_a X)} \to \inb{(\bdot T^*_a X)}, \quad
z\mapsto -z^{-1}
\]
does not depend on the choice of the coordinate, and induces a homeomorphism
\[
c\colon S_a X\isoto S^*_a X.
\]

\begin{lemma}\label{lem:mutolambda}
For $K\in\BECp X$, there is a natural morphism
\[
\Eopb c\Emu^\sph_{\st 0}(K)[1] \to \enh\lambda^\rb_{\st 0}(K).
\]
\end{lemma}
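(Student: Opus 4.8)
The plan is to reduce everything to a comparison of two explicit integral transforms of the conic object $G\defeq\Enu_{\st 0}(K)\in\BECcon{\V}$. By Lemma~\ref{lem:KnuK2}~(i), applied with $N=\st 0$ and the canonical identification $T_0\V\simeq\V$, one has $\enh\lambda^\rb_{\st 0}(K)\simeq\enh\lambda^\rb_{\st 0}(G)$; and $\Emu_{\st 0}(K)\simeq G^\wedge$, an object on the dual line $\W$, by the definition of $\Emu_{\st 0}$. So both sides of the asserted morphism depend on $K$ only through $G$. Since $\gamma\colon\inb{\bdot\V}\to S_0\V$ is a $\Gmp$-principal bundle, $\Eopb\gamma$ is fully faithful on conic objects, with quasi-inverse $\Eoim\gamma$; hence it suffices to construct a natural morphism after applying $\Eopb\gamma$.

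For the right-hand side, Lemma~\ref{lem:lambdakernel} yields $\Eopb\gamma\,\enh\lambda^\rb_{\st 0}(G)\simeq\Phi_C(G)=\Eeeim{{p_2}}\bl C\ctens\Eopb{p_1}G\br$ over $\V\times\inb{\bdot\V}$, with kernel $C=\epsilon(\field_B)[1]$ supported on the closed ``ray bundle'' $B=\set{(v,y)\in\V\times\bdot\V}{v\in\R_{\ge 0}\,y}$. For the left-hand side, I would start from $\Emu^\sph_{\st 0}(K)=\Eoim{{\gamma^*}}\Eopb{{u^*}}(G^\wedge)$ together with the kernel presentation $G^\wedge\simeq\Eeeim{{q_2}}\bl\Fou\ctens\Eopb{q_1}G\br$, $\Fou=\epsilon^+(\field_{\{\langle v,w\rangle\le 0\}})$, over $\V\times\W$. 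The homeomorphism $c\colon S_0\V\isoto S^*_0\V$ is covered by $\bdot c\colon\inb{\bdot\V}\isoto\inb{\bdot\W}$, $y\mapsto -y^{-1}$, and since the resulting square commutes one gets $\Eopb c\,\Eoim{{\gamma^*}}\simeq\Eoim\gamma\,\Eopb{{\bdot c}}$; combined with $\Eopb\gamma\,\Eoim\gamma\simeq\id$ on conic objects and base change through $q_2$, this gives
\[
\Eopb\gamma\bl\Eopb c\,\Emu^\sph_{\st 0}(K)\br\;\simeq\;\Eeeim{{p_2}}\bl\epsilon^+(\field_{\ol B})\ctens\Eopb{p_1}G\br ,
\]
where $\ol B=\set{(v,y)\in\V\times\bdot\V}{\langle v,-y^{-1}\rangle\le 0}=\set{(v,y)\in\V\times\bdot\V}{\Re(v/y)\ge 0}$ is the ``half-space bundle'' obtained by transporting the Fourier--Sato support condition $\langle v,w\rangle\le 0$ along $w=c(y)=-y^{-1}$.

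The geometric core of the argument is then the inclusion $B\subset\ol B$, together with the fact that $B$ is closed in $\V\times\bdot\V$: for $v=\lambda y$ with $\lambda\ge 0$ one has $\langle v,-y^{-1}\rangle=-\lambda\le 0$, which is exactly the statement that the non-negative ray through $y$ lies in the closed half-space cut out by the covector $c(y)$. The resulting restriction-to-a-closed-subset morphism of sheaves $\field_{\ol B}\to\field_B$, after applying $\epsilon^+$, shifting by $[1]$, and composing with the transform $\Eeeim{{p_2}}(\dummy\ctens\Eopb{p_1}G)$, produces a natural morphism $\Eeeim{{p_2}}\bl\epsilon^+(\field_{\ol B})[1]\ctens\Eopb{p_1}G\br\to\Phi_C(G)$; the replacement of $\epsilon^+$ by $\epsilon$ in the target is immaterial, since $\Phi_C$ applied to an object of $\BECp{\V}$ only detects the $\epsilon^+$-part of $C$. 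Through the identifications of the first two paragraphs, this is the morphism of the statement.

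I expect the bookkeeping of the middle paragraph to be the one genuine difficulty: transporting the Fourier--Sato kernel along $c$ correctly through the sphere-bundle quotients $\gamma$ and $\gamma^*$ and through the base-change isomorphisms, while keeping track of the shift $[1]$ and of the conic structures — note in particular that $c$ inverts the $\Gmp$-scaling, so it sends conic objects to conic objects but via the inversion automorphism of $\Gmp$. Once Lemma~\ref{lem:lambdakernel} and these identifications are in place, the construction of the morphism reduces to the formal manipulation of $\field_{\ol B}\to\field_B$ described above.
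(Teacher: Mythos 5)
Your proposal is correct and follows essentially the same route as the paper: reduce to the conic object $\Enu_{\st 0}(K)$ via Lemma~\ref{lem:KnuK2} and $\Emu_{\st 0}\simeq(\cdot)^\wedge\circ\Enu_{\st 0}$, present both sides as kernel transforms using Lemma~\ref{lem:lambdakernel} and the Fourier--Sato kernel, and induce the morphism from the inclusion of closed subsets (ray bundle inside the transported half-space condition $\{\Re(zw)\le 0\}$) via $\field_{\ol B}\to\field_B$. The only (cosmetic) difference is that you perform the comparison on $\inb{\bdot\V}$ after transporting the Fourier--Sato kernel along $c$, whereas the paper works on $\inb{\bdot\W}$ and transports the $\lambda$-side kernel instead.
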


\begin{proof}
Since $c$ is an isomorphism and $\opb\gamma$ is fully faithful, it is enough to show that there is a natural morphism
\[
\Eopb u\Emu_{\st 0}(K)[1] \to \Eoim c\Eopb\gamma\enh\lambda^\rb_{\st 0}(K),
\]
 where $u\colon\inb{(\bdot T^*_a X)}\to\inb{(T^*_a X)}$ is the natural morphism.
Write $L=\Enu_{\st 0}(K)$. By Lemma~\ref{lem:KnuK2}, it is equivalent to prove that there is a natural morphism in $\BECp{\inb{(\bdot T^*_a X)}}$
\begin{equation}\label{eq:Foulambda1}
\Eopb u L^\wedge[1] \to \Eoim c\Eopb\gamma\enh\lambda^\rb_{\st 0}(L).
\end{equation}
 Set $\V=T_a X$ and $\W=T^*_a X$. 
Consider the subsets of $\V\times\bdot\W$
\[
F=\st{\Re z w \leq 0}, \quad
G=\st{\Re z w \leq 0,\ \Im z w = 0}.
\]
The inclusion of closed subsets $G\subset F$ gives a morphism
\begin{equation}\label{eq:Foulambda2}
\Phi_{\epsilon^+(\field_F)}(L) \to \Phi_{\epsilon^+(\field_G)}(L).
\end{equation}
Then we obtain \eqref{eq:Foulambda1} by applying $\Eopb u$ to
\eqref{eq:Foulambda2}. In fact, on one hand, recalling
the notations on the enhanced Fourier-Sato transforms from
\S\ref{sse:Founumu}, one has
\[
\Eopb u L^\wedge[1] \simeq \Eopb u \Phi_{\epsilon^+(\field_F)}(L)[1].
\]
On the other hand, one has $G\cap(\V\times\bdot \W)=\st{(z,w)\semicolon z=-\lambda w^{-1},\ \exists \lambda\geq 0}$. Hence $\Eoim c\Eopb\gamma\enh\lambda^\rb_{\st 0}(L) \simeq
\Eopb u \Phi_{\epsilon^+(\field_G)}(L)[1]$
by Lemma~\ref{lem:lambdakernel}.
\end{proof}

\begin{proposition}
\label{pro:phinu'S}
Let $K\in\Eperv(\ifield_X)$. Then there are natural isomorphisms in $\BECp{S_a X}$
\[
\enh\widetilde\lambda_{\st a}^\rb(K) \isofrom
\Eopb c \Emu^\sph_{\st a}(K)[1] \isoto
\enh\lambda_{\st a}^\rb(K).
\]
In particular, $\enh\lambda_{\st a}^\rb(K)\simeq\enh\widetilde\lambda_{\st a}^\rb(K)$ is of sheaf type, and its associated sheaf is a local system.
\end{proposition}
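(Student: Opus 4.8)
The plan is to deduce the right‑hand isomorphism from Lemma~\ref{lem:mutolambda} by proving that the morphism constructed there is invertible on perverse objects, to obtain the left‑hand one by Verdier duality, and to read off the last assertion from Proposition~\ref{pro:numuperv}. First I would reduce to a statement about ordinary sheaves. By Lemma~\ref{lem:KnuK2}, under the identification $S_aX\simeq S_0(T_aX)$ one has $\enh\lambda^\rb_{\st a}(K)\simeq\enh\lambda^\rb_{\st 0}\bl\Enu_{\st a}(K)\br$, and (using that $\Enu_{\st a}(K)$ is conic) also $\Emu^\sph_{\st a}(K)\simeq\Emu^\sph_{\st 0}\bl\Enu_{\st a}(K)\br$. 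By Proposition~\ref{pro:numuperv}, $\Enu_{\st a}(K)\simeq e(F)$ for a $\Gmp$‑conic perverse sheaf $F$ on $T_aX$ whose only singularity is the origin, while $\Emu^\sph_{\st a}(K)[1]\simeq e\bl\van^0(K)\br$ with $\van^0(K)$ a local system on $S^*_aX$. Thus every object in the statement is of sheaf type, and, $e$ being fully faithful and commuting with $\Eopb c$, the morphism of Lemma~\ref{lem:mutolambda} is the image under $e$ of a morphism of sheaves on $S_aX$; it is enough to prove that the latter is an isomorphism.

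For this, using Lemma~\ref{lem:lambdakernel} and the Fourier–Sato kernel description exploited in the proof of Lemma~\ref{lem:mutolambda}, the two sides are identified, stalkwise over $w\in T^*_aX\setminus\{0\}$, with $\rsect_c\bl\st{\Re zw\le0};F\br$ and $\rsect_c\bl\st{zw\in\R_{\le0}};F\br$, the morphism being induced by the closed inclusion $\st{zw\in\R_{\le0}}\subset\st{\Re zw\le0}$. So the task is to show this corestriction is an isomorphism for every such $w$. Excising the closed ray from the closed half‑plane, its fibre is $\rsect_c$ of $F$ over $\st{\Re zw\le0,\ \Im zw\ne0}$, a disjoint union of two convex regions avoiding the origin and non‑compact in a ``closed'' direction; since $F$ restricts to a shifted local system on $T_aX\setminus\{0\}$, it is constant on each of them, hence this $\rsect_c$ vanishes and the corestriction is invertible. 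This gives the right‑hand isomorphism $\Eopb c\Emu^\sph_{\st a}(K)[1]\isoto\enh\lambda^\rb_{\st a}(K)$.

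I would then obtain the left‑hand isomorphism by duality. Since $\Eperv(\ifield_X)$ is stable under $\Edual$, applying $\Edual$ to the isomorphism just proved for $\Edual K$ and invoking Lemma~\ref{lem:rbdual}, which interchanges $\enh\lambda^\rb_{\st a}$ and $\enh\widetilde\lambda^\rb_{\st a}$ up to shift, together with $\Edual\Eopb c\simeq\Eopb c\Edual$ ($c$ being a homeomorphism) and the compatibility of $\Edual$ with $\Emu^\sph_{\st a}$ (coming from \eqref{eq:sphrb} and the Fourier–Sato transform), produces $\enh\widetilde\lambda^\rb_{\st a}(K)\isofrom\Eopb c\Emu^\sph_{\st a}(K)[1]$; one checks along the way that the shifts match. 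The final assertion is then immediate: $\Eopb c\Emu^\sph_{\st a}(K)[1]\simeq e\bl\opb c\van^0(K)\br$ is of sheaf type with associated sheaf $\opb c\van^0(K)$, a local system as the pullback of one along the homeomorphism $c$, and by the two isomorphisms the same holds for $\enh\lambda^\rb_{\st a}(K)\simeq\enh\widetilde\lambda^\rb_{\st a}(K)$.

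The hard part will be the middle step, namely that the morphism of Lemma~\ref{lem:mutolambda} becomes an isomorphism precisely on perverse objects: this rests on the stalkwise corestriction computation above, which genuinely uses that $F$ is locally constant away from the origin, i.e.\ the normal‑form condition built into perversity. Keeping track of the shifts in the duality argument, in particular the antipodal normalization implicit in the identification $c$, is a secondary technical point.
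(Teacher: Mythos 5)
Your proposal is correct, and its skeleton coincides with the paper's: reduce by Lemma~\ref{lem:KnuK2} and Proposition~\ref{pro:numuperv} to a $\Gmp$-conic perverse sheaf $F$ on $T_aX$ with $K=e(F)$, show that the morphism of Lemma~\ref{lem:mutolambda} is invertible there, and recover the tilde isomorphism by applying $\Edual$ to the statement for $\Edual K$ via Lemma~\ref{lem:rbdual} (the paper quotes \cite[Lemma~4.5]{DK19} for the compatibility of $\Edual$ with $\Emu^\sph$, where you appeal to \eqref{eq:sphrb} and Fourier--Sato duality; you rightly note the needed stability of $\Eperv$ under $\Edual$, which the paper leaves implicit). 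Where you genuinely diverge is in the heart of the argument, the stalkwise invertibility. The paper never passes through the kernel picture at this point: it expresses the stalks of $\mu^\sph_{\st 0}(F)$ and $\lambda^\rb_{\st 0}(F)$ as colimits $\ilim\RHom(\field_{\Gamma^\circ},F)$ and $\ilim\RHom(\field_{\V\setminus\Lambda},F)$ over cones (using \cite[Theorems~4.2.3, 4.3.2]{KS90} and Lemma~\ref{lem:dtpsiphi}), and reduces the required vanishing $\RHom(\field_{\lambda_2\setminus\lambda_1},F)\simeq0$ to a computation on the circle $S_0\V$, using conicity of $F$ in an essential way. You instead use Lemma~\ref{lem:lambdakernel} and the kernels $\field_{\{\Re zw\le0\}}\to\field_{\{zw\in\R_{\le0}\}}$ from the proof of Lemma~\ref{lem:mutolambda}, identify the stalk of the morphism at $w\neq0$ with the corestriction $\rsect_c\bl\{\Re zw\le0\};F\br\to\rsect_c\bl\{zw\in\R_{\le0}\};F\br$ by proper base change, and kill its fibre $\rsect_c\bl\{\Re zw\le0,\ \Im zw\neq0\};F\br$ because each of the two convex components is a product of a closed half-line with an open half-line, avoiding the origin, where $F$ is a constant (shifted) local system and compactly supported cohomology vanishes. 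This is a more elementary and self-contained computation (it uses conicity only to write the stalks, not for the vanishing itself), at the price of having to justify carefully that the enhanced morphism \eqref{eq:Foulambda2} for $K=e(F)$ is indeed $e$ of the corresponding sheaf-level integral-transform morphism, so that the stalk computation applies; the facts needed for this ($e\comp\lambda^\rb\simeq\enh\lambda^\rb\comp e$, $(e(F))^\wedge\simeq e(F^\wedge)$, and the commutation of $e$ with the operations involved) are all available in the paper, so this is a matter of bookkeeping rather than a gap.
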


\begin{proof}
(i) Let us show that the first isomorphism follows by duality from the second one.

One has
\begin{align*}
\enh\widetilde\lambda_{\st a}^\rb(K)
&\simeq \enh\widetilde\lambda_{\st a}^\rb(\Edual \Edual K)
\underset{(*)}\simeq \Edual \bl\enh\lambda_{\st a}^\rb(\Edual K) [-1] \br, \\
\Eopb c \Emu^\sph_{\st a}(K)
&\simeq \Eopb c \Emu^\sph_{\st a}(\Edual \Edual K)
\underset{(**)}\simeq \Edual \bl\Eopb c \Emu^\sph_{\st a}(\Edual K)[-1]\br.
\end{align*}
where $(*)$ follows from Lemma~\ref{lem:rbdual}, and
$(**)$ from \cite[Lemma~4.5]{DK21a}.

\smallskip\noindent(ii) Let us prove the first isomorphism.
Set $\V=T_a X$ and $\W=T^*_a X$. By Proposition~\ref{pro:numuperv}, one has $\Enu_{\st a}(K)\simeq e\iota(F)$ for some $F\in\Perv(\field_\V)\cap\BDC_{\Gmp}(\field_\V)$.
By \cite[Lemma~4.10]{DK21a} and Lemma~\ref{lem:KnuK2}, we may take $X=\V$, $a=0$, and $K=e\iota(F)$.
Hence, we are reduced to prove that the morphism
\[
\Eopb c \Emu^\sph_{\st 0}(e\iota(F))[1] \to
\enh\lambda_{\st 0}^\rb(e\iota(F)),
\]
from Lemma~\ref{lem:mutolambda}, is an isomorphism. One has
\begin{align*}
\Eopb c \Emu^\sph_{\st 0}(e\iota(F))
&\simeq e\bl \opb c \mu^\sph_{\st 0}(F)\br , \\
\enh\lambda_{\st 0}^\rb(e\iota(F))
&\simeq e\bl \lambda_{\st 0}^\rb(F)\br.
\end{align*}
Since $e\iota$ is fully faithful, it is enough to show that there is an isomorphism
\[
\opb c \mu^\sph_{\st 0}(F)[1] \isoto
\lambda_{\st 0}^\rb(F),
\]
which can be checked at the level of stalks.

The underlying real vector spaces to $\V$ and $\W$ are in duality by the pairing $\langle v,w \rangle = \Re(z w)$.
For $\Gamma\subset\W$, the set
\[
\Gamma^\circ = \{v\in\V \semicolon \langle v,w \rangle \geq0 \text{ for any } w\in\Gamma \}
\]
is called the polar cone of $\Gamma$.

For $\theta\in S_a X=S_0\V$, by \cite[Theorems~4.2.3, 4.3.2]{KS90} one has
\begin{align*}
\bl \nu_{\st 0}^\sph(F) \br_\theta
&\simeq  \ilim[\Lambda,r]\RHom(\field_{\Lambda\cap \{|z|<r\}} ,F) \\
&\underset{(*)}\simeq  \ilim[\Lambda]\RHom(\field_\Lambda ,F), \\
\bl\mu^\sph_{\st 0}(F)\br_{c(\theta)}
&\simeq  \ilim[\Gamma,r]\RHom(\field_{\Gamma^\circ\cap \{|z|<r\}} ,F) \\
&\underset{(*)}\simeq  \ilim[\Gamma]\RHom(\field_{\Gamma^\circ} ,F),
\end{align*}
where $\Lambda$ runs over the open convex proper cones in $\V$
containing $\theta$, $\Gamma$ runs over the open convex proper cones in $\W$
containing $c(\theta)$, and $r\to0+$. Here, the isomorphisms $(*)$ are due to the fact that $F$ is conic.

It then follows from Lemma~\ref{lem:dtpsiphi}~(i) that one has
\[
\bl \lambda_{\st 0}^\rb(F) \br_\theta[-1]
\simeq  \ilim[\Lambda]\RHom(\field_{\V\setminus\Lambda} ,F).
\]

For any $\Lambda$ as above, taking $\Gamma = c(\Lambda)$, one has $\lambda_2\defeq\V\setminus\Lambda \supset \Gamma^\circ\eqdef\lambda_1$.
Hence, it is enough to prove
\[
\RHom(\field_{\lambda_2 \setminus \lambda_1} ,F)\simeq 0.
\]
Consider the maps
\[
\xymatrix{
\V & \bdot\V \ar@{_(->}[l]_-j \ar[r]^-q & S_0\V=\bdot\V/\Gmp.
}
\]
Let $\emptyset\subsetneq I_k\subsetneq S_0\V$ be the closed connected subset such that $\bdot\lambda_k=\opb q(I_k)$, for $k=1,2$.
Let $L$ be a local system on $S_0V$ such that $\opb j F\simeq \opb q L[1]$.
Then, one has
\begin{align*}
\RHom(\field_{\lambda_2 \setminus \lambda_1} ,F)
&\simeq \RHom(\reim j\field_{\bdot\lambda_2 \setminus \bdot\lambda_1} ,F)
\simeq \RHom(\field_{\bdot\lambda_2 \setminus \bdot\lambda_1} ,\opb q L[1]) \\
&\simeq \RHom(\field_{\bdot\lambda_2 \setminus \bdot\lambda_1} ,\epb q L)
\simeq \RHom(\reim q\field_{\bdot\lambda_2 \setminus \bdot\lambda_1} ,L) \\
&\simeq \RHom(\field_{I_2 \setminus I_1}[-1] ,L).
\end{align*}
The last term vanishes, since
\[
V\simeq\RHom(\field_{I_1}[-1] ,L) \isoto \RHom(\field_{I_2}[-1] ,L)\simeq V,
\]
where $V$ is the stalk of $L$.
\end{proof}

\begin{remark}
Let us use notations as in Lemma~\ref{lem:mutolambda} and its proof.
Then, for $L=\Enu_{\st 0}K$ with $K\in\Eperv(\ifield_X)$, the morphism \eqref{eq:Foulambda2} is an isomorphism. In fact, on one hand, $\Eopb u\eqref{eq:Foulambda2}= \eqref{eq:Foulambda1}$ is an isomorphism by Proposition~\ref{pro:phinu'S}.
On the other hand,
\[
\Eopb o \Phi_{\epsilon^+(\field_F)}(L) \simeq
\Eoim \tau L \simeq
\Eopb o \Phi_{\epsilon^+(\field_G)}(L),
\]
where $o\colon \st 0\to\V$ is the linear embedding, and $\tau\colon\V\to\st 0$ its transpose.
\end{remark}

\subsection*{Acknowledgements}
The research of A.D'A.\
was partially supported by GNAMPA/INdAM.  He acknowledges the kind hospitality at RIMS of
Kyoto University, and at the Perimeter Institute in Waterloo, during the preparation of this paper.

The research of M.K.\
was supported by Grant-in-Aid for Scientific Research (B)
15H03608, Japan Society for the Promotion of Science


\begin{thebibliography}{10}

\bibitem{DHMS18} A. D'Agnolo, M. Hien, G. Morando, C. Sabbah,
\emph{Topological computations of some Stokes phenomena},
Ann. Inst. Fourier, \textbf{70} (2020), no. 2, 739--808.

\bibitem{DK16} A. D'Agnolo and M. Kashiwara,
\emph{Riemann-Hilbert correspondence for holonomic D-modules},
Publ. Math. Inst. Hautes \'Etudes Sci.
{\bf 123} (2016), no. 1, 69--197.

\bibitem{DK16bis} \bysame,
\emph{Enhanced perversities},
J. Reine Angew. Math. {\bf 751} (2019), 185--241.

\bibitem{DK18} \bysame,
\emph{A microlocal approach to 
the enhanced Fourier-Sato transform in dimension one},
Adv. Math. {\bf 339} (2018), 1--59.

\bibitem{DK21a} \bysame,
\emph{Enhanced specialization and microlocalization},
 Selecta Math. \textbf{27} (2021), no. 1, Paper No. 7, 32 pp. 

\bibitem{DK21b} \bysame,
\emph{On a topological counterpart of regularization for holonomic $\D$-modules}, 
 J. \'Ec. polytech. Math. \textbf{8} (2021), 27--55.  

\bibitem{DMR} P. Deligne, B. Malgrange and J.-P. Ramis,
\emph{Singularit\'es irr\'eguli\`eres. Correspondance et documents},
Documents Math\'ematiques {\bf 5}, Soci\'et\'e Math\'ematique de France, Paris (2007). xii+188 pp.

\bibitem{GS14} S. Guillermou and P. Schapira,
\emph{Microlocal theory of sheaves and Tamarkin's non displaceability theorem},
in: Homological Mirror Symmetry
and Tropical Geometry, Lecture Notes of the Unione Matematica Italiana {\bf 15}, Springer, Berlin (2014), 43--85.

\bibitem{KS90} M. Kashiwara and P. Schapira,
{\em Sheaves on manifolds},
Grundlehren der Mathematischen Wissenschaften \textbf{292}, Springer,
Berlin (1990), x+512 pp.

\bibitem{KS01} \bysame,
{\em Ind-sheaves},
Ast\'erisque \textbf{271} (2001), 136 pp.

\bibitem{KS16} \bysame,
\emph{Regular and irregular holonomic D-modules},
London Mathematical Society Lecture Note Series \textbf{433}, Cambridge University Press, Cambridge (2016), vi+111 pp.

\bibitem{Mal91} B. Malgrange, 
\emph{\'Equations diff\'erentielles \`a coefficients polynomiaux}, 
Progress in Mathematics \textbf{96}, Birkh\"auser (1991), vi+232 pp.

\bibitem{Moc16} T. Mochizuki,
\emph{Curve test for enhanced ind-sheaves and holonomic $D$-modules},
{\tt arXiv:1610.08572v1} (2016), 87 pp.

\bibitem{Moc21} \bysame,
\emph{Stokes shells and Fourier transforms},
 {\tt arXiv:1808.01037v2} (2021), 221 pp. 

\bibitem{Tam08} D. Tamarkin,
\emph{Microlocal condition for non-displaceability},
in: Algebraic and Analytic Microlocal Analysis,
Springer Proc. in Math. \& Stat. {\bf 269} (2018), 99--223.

\end{thebibliography}
\end{document}